\title{Skein Categories in Non-semisimple Settings}
\author{Jennifer Brown and Benjamin Ha\"ioun}
\date{\today}
\newtheorem{counter}{Counter}
\newtheorem{theorem}[counter]{Theorem}
\newtheorem{lemma}[counter]{Lemma}
\newtheorem{corollary}[counter]{Corollary}
\newtheorem{proposition}[counter]{Proposition}
\theoremstyle{definition}
\newtheorem{definition}[counter]{Definition}
\newtheorem{remark}[counter]{Remark}
\numberwithin{counter}{section}
\numberwithin{equation}{section}
\newcommand{\idty}{{\mathrm{1}\mkern-4mu{\mathchoice{}{}{\mskip-0.5mu}{\mskip-1mu}}\mathrm{l}}}
\newcommand\oT{\overline{T}}
\renewcommand\AA{\mathcal{A}}
\renewcommand\SS{\mathcal{S}}
\newcommand\BB{\mathcal{B}}
\newcommand\CC{\mathcal{C}}
\newcommand\DD{\mathcal{D}}
\newcommand\EE{\mathcal{E}}
\newcommand\II{\mathcal{I}}
\newcommand\JJ{\mathcal{J}}
\newcommand\LL{\mathcal{L}}
\newcommand\VV{\mathcal{V}}
\newcommand\Rr{\mathbb{R}}
\newcommand\inj{\hookrightarrow}
\DeclareMathOperator{\Vect}{Vect}
\DeclareMathOperator{\Sk}{Sk}
\DeclareMathOperator{\SkMod}{SkMod}
\DeclareMathOperator{\SkFun}{\underline{Sk}}
\DeclareMathOperator{\SkAlg}{SkAlg}
\DeclareMathOperator{\SkCat}{SkCat}
\DeclareMathOperator{\Dist}{Dist}
\DeclareMathOperator{\Disk}{\mathbb{D}isk}
\DeclareMathOperator{\Surf}{\mathbb{S}urf}
\DeclareMathOperator{\Bimod}{Bimod}
\DeclareMathOperator{\Cat}{Cat_{\Bbbk}}
\DeclareMathOperator{\colim}{colim}
\DeclareMathOperator{\const}{const}
\DeclareMathOperator{\RT}{RT}
\DeclareMathOperator{\End}{End}
\DeclareMathOperator{\Hom}{Hom}
\DeclareMathOperator{\id}{id}
\DeclareMathOperator{\Nat}{2Nat}
\DeclareMathOperator{\Fun}{Fun}
\begin{document}

\maketitle

\begin{abstract}
We introduce a version of skein categories of surfaces which depends on a tensor ideal in a linear ribbon category, thereby extending the existing theory to the setting of non-semisimple TQFTs.
We obtain modified notions of skein algebras of surfaces and skein modules of 3-cobordisms for non-semisimple ribbon categories.%, and of $\II$-skein modules for 3-cobordisms.

We prove that these skein categories built from ideals coincide with factorization homology, shedding new light on the similarities and differences between the semisimple and non-semisimple settings.
The essential difference is the need to work with profunctors in the non-semisimple setting. Doing so produces a ``distinguished presheaf'' which plays the role of the distinguished object in skein categories in semisimple settings. 
As a consequence, we get a skein-theoretic description of factorization homology for a large class of balanced braided presentable categories, precisely all those which are expected to induce oriented categorified 3-TQFTs.
%We prove directly that $\II$-skein categories compute the defining homotopy colimit of factorization homology without relying on excision properties.
%We obtain a notion of $\II$-skein algebras for non-semisimple ribbon categories.
\end{abstract}
\tableofcontents

\section{Introduction}
We pass between two main perspectives in this work.
First is a topological viewpoint, emphasizing the role of skeins, while the second is algebraic, emphasizing that of factorization homology.
These two approaches complement each other, since skeins give us topological intuition for the abstract constructions provided by factorization homology, which in turns gives us a toolkit for understanding the explicit skein constructions.
The general plan of this paper is to establish then take advantage of this exchange to study non-semisimple skein theory.

\subsection{Background and motivation}
Skein theory was developed in the 1990s \cite{PrzytyckiSkeinModules,TuraevSkeinModules, RT90}, as a sweeping generalization of Vaughan Jones' construction of knot invariants  \cite{Jones_1985}.
It is the basis for a family of link and 3-manifold invariants known collectively as Reshetikhin--Turaev invariants and has close ties to Topological Quantum Field Theories (TQFTs) in dimension 3 \cite{TuraevBook, BHMV95} and in dimension 4 \cite{CYK, WalkerNotes}.
%It gives a graphical calculus for ribbon categories in terms of ribbon graphs embedded in 3-dimensional space which naturally gives the central notion of skein modules of 3-manifolds. These skein modules have been the subject of much study \cite{PrzytyckiSkeinModules,TuraevSkeinModules, GJS}. The best known amongst them is the Kauffman bracket skein module which corresponds to the ribbon category of $\mathcal{U}_q(\mathfrak{sl}_2)$-modules.
The construction is based on a graphical calculus built using decorated oriented framed graphs (``\emph{ribbon graphs}") embedded in 3-dimensional space, whose decorations and relations depend on the choice of a ribbon category.
To better suit future work on TQFTs and simplify our presentation, we will only consider $\Bbbk$-linear ribbon categories over a field $\Bbbk$.

A central notion is that of a \emph{skein module} spanned by ribbon graphs embedded in a fixed 3-manifold, the best known of which is the Kauffman bracket skein module.
These have been studied extensively, see \cite{Garoufalidis_Yu_2024,Panitch_Park_2024,GJS, KinnearDimSkein, DKSnontrivKBSM} for a few recent developments and \cite{Przytycki_Bakshi_Ibarra_Montoya-Vega_Weeks_2024} for a historical overview.
The skein module of a thickened surface inherits an algebra structure from stacking and is therefore referred to as the \emph{skein algebra}. It has been extensively studied, particularly in the case of $\mathrm{SL}_2$ where it a quantization of the $\mathrm{SL}_2$-character variety of the surface, \cite{BullockKBSM, PrzytyckiSikoraKBSA}.
When the deformation parameter $q$ is a root of unity, the ribbon category $\operatorname{Rep}_q(SL_2)$ is non-semisimple, and the associated skein algebra is an object of great current interest \cite{BonahonWongRepKBSA, BonahonWongRepKBSAIIIUnicityConj, FKLunicityKBSA, GJSunicity}.%\Jenny{A sentence on what these are? It's good for the references but seems outta nowhere.}

%More generally, on a thickened surface, we consider skein modules 
%\Ben{Write a sentence to introduce skein categories.}
%one associates its \emph{skein category} which describes the behavior of skeins in the thickened surface \cite{WalkerNotes, JohnsonFreydHeisenbergPicture}. It contains the skein algebra as the endomorphism algebra of a distinguished object.
The \emph{skein category} of a surface \cite{WalkerNotes, JohnsonFreydHeisenbergPicture} is a $\Bbbk$-linear category whose hom-spaces are spanned by the same colored ribbon graphs which define skein algebras and modules.
% In general skein categories are non-monoidal.
We focus on skein categories (Definition \ref{def:modified-skein-cat}) in part because they underpin the construction of skein algebras and modules (Sections \ref{sec:mod-skein-algebras} and \ref{sec:mod-skein-modules}), and in part because they compute factorization homology (Theorem \ref{thm:main-result}). 
%they drive a number of related constructions, e.g. skein algebras discussed in Section \ref{sec:mod-skein-algebras} and skein modules discussed in Section \ref{sec:mod-skein-modules}
%\Ben{I removed the sentence saying that their are not monoidal. I also am not sure that we can construct skein modules of 3-manifolds from skein categories, maybe the old sentence was better.}

\paragraph{Non-semisimple Skein Theory}
Non-semisimple ribbon categories appear naturally in physical applications, for example in Chern-Simons theories coupled with matter sectors \cite{CreutzigDimofteGarnerGeerNonssQFT}, or in Chern-Simons theories built from supergroups \cite{Rozansky_Saleur_1992,Mikhaylov_2015_thesis}, which have recently gained attention in skein theory \cite{Garner_Geer_Young_2025,Geer_Young_2025}.
The usual construction of a ``Kirby color", used to define invariants of 3-manifolds from link invariants in the semisimple case \cite{RT90}, no longer applies in non-semisimple theories.
This problem was first overcome by Hennings \cite{Hennings3mfldInvHopf} in the Hopf-algebraic case and by Lyubashenko \cite{Lyub3mfldProjMCG} in a more general setting.
These developments led to partially defined non-semisimple TQFTs for connected manifolds with corners \cite{KerlerLyubBook}.

Additionally, when the Reshetikhin--Turaev construction is applied to a non-semisimple ribbon category, the associated link invariant is often identically zero.
%This prevents the construction of 3-manifold invariants and TQFTs. 
%In the non-semisimple case, the underlying link invariant is often 0, and Reshetikhin--Turaev construction does not give 3-manifold invariants and TQFTs.
%Surprisingly, 
This was overcome by the theory of \emph{modified traces} \cite{GeerPatureauTuraevModifiedqdim, GKPgeneralizedTraceModdim,CGP3manInv}, which have fueled the rigorous mathematical construction of non-semisimple 3-manifold invariants and TQFTs \cite{CGP3manInv, BCGP, DGGPR, CGHP}.
Modified traces are defined relative to a tensor ideal $\II$ in the ambient ribbon category.
The smallest non-trivial tensor ideal in any tensor category is that formed by projective objects.
If the tensor unit is projective, as is the case for semisimple categories, then the category has no non-trivial proper tensor ideals and the modified trace is exactly the quantum trace used in the original Reshetikhin--Turaev constructions.
%The key ingredient used by Lyubashenko is a certain invariant vector in an object which we will refer to as Lyubashenko's coend.
%The other key ingredients are modified traces \cite{GKPgeneralizedTraceModdim} used to renormalize Reshetikhin--Turaev link invariants and Lyubashenko's 3-manifold invariants which often vanish.
%Invariants of 3-manifolds based on these techniques were defined in \cite{CGP3manInv} and TQFTs in \cite{BCGP, DGGPR, CGHP}.

Another key ingredient in non-semisimple skein-theoretic constructions is the notion of \emph{admissible skein modules} \cite{CGPAdmissbleskein}, which likewise depend on the choice of a tensor ideal $\II$. 
We refer to this family of skein-theoretic constructions well adapted to non-semisimple ribbon categories as \emph{modified skein theory} or $\II$-skein theory.
The closely related theory of string nets \cite{LevinWenStringNets} followed a similar path \cite{MSWYnonssStringNet, CGPVnc2plus1} towards non-semisimple generalization. 

Work to translate unmodified objects and properties into the modified setting is significant and ongoing.
Many standard constructions and results in the semisimple case, for example comparisons between Reshetikhin--Turaev and Crane--Yetter or Turaev--Viro-type TQFTs are yet to appear in the non-semisimple case. 

%, though recent progress has been made towards understanding Crane--Yetter with non-semisimple inputs, see for example \cite{Kinnear_2024}.
A persistent theme in this larger project is working around the fact that the monoidal unit of a non-semisimple ribbon category is not projective and therefore typically forbidden as a labeling object.
Indeed, the tensor ideal $\II$ on which modified traces and admissibility conditions are defined typically will not contain the unit, in which case the admissible skein modules will not contain the empty skein.
In the current paper this manifests as a need to work with presheaf-valued functors, see Remark \ref{rmk:fake-empty}.

\paragraph{Factorization Homology}
Factorization homology was introduced by Ayala and Francis \cite{Ayala_Francis_2019}. 
It gives a way of ``integrating" a $\Disk_n$-algebra\footnote{This is equivalent to the notion of an $\mathbb{E}_n$-algebra, see \cite[Remark 2.28]{Ayala_Francis_2019}.} $A$ in a symmetric monoidal $\infty$-category $\SS$, called the coefficients, over an $n$-manifold $M$.
Doing this, it produces an object $\int_MA\in\SS$ which is functorial in both $M$ and $A$.
We will take $n=2$, take $\SS$ to be a $(2,1)$-category, fix $A$, and contemplate $\int_-A$ as a symmetric monoidal functor on the (2,1)-category $\Surf$ of surfaces, embeddings and isotopies.
%The output is an object $\int_M A$ of $\SS$. 
%We will be interested in the case where $n=2$, $\SS$ is a bicategory of categories and the tangential structure is orientation. 

We will contemplate three choices of (2,1)-category $\SS$. 
The first is the (2,1)-category $\Cat$ of $\Bbbk$-linear categories, functors and natural transformations. 
The second $\Bimod$ (recalled in Definition \ref{def:Bimod}) has the same objects and 2-morphisms, but allows functors to be presheaf-valued, i.e. a morphism from $\CC$ to $\DD$ is a functor $\CC\to \Fun(\DD^{op},\Vect_\Bbbk)$. %, called a bimodule. 
The third is $\Pr$ the category presentable cocomplete categories and cocontinuous functors. 
It contains $\Bimod$ as the full subcategory of presheaf categories. 
Free cocompletion gives a 2-functor $\Cat \inj \Pr$ that sends a category to its presheaf category.

The ribbon categories used to define skein theory are in one-to-one correspondence with a collection of well behaved $\Disk_2$-algebras in $\Cat$, which in general correspond to balanced braided categories \cite[Example 5.1.2.4]{LurieHA} \cite[Chapter 6]{FresseE2Operads}.
By the main result of \cite{CookeExcision}, the skein category of a small $\Bbbk$-linear ribbon category computes factorization homology of the corresponding $\Disk_2$-algebra in $\Cat$, see also \cite{KirillovThamFH4DTQFT} for a similar result and \cite{BrochierWoike, MullerWoikeAdmissibleskein} for other comparisons with factorization homology.

Passing to the free cocompletion gives a skein theoretic description of $\int_{\Sigma}\EE$ for those $\Disk_2$-algebras $\EE$ in $\Pr$ which are the free cocompletion of a $\Disk_2$-algebra in $\Cat$.
Note that these constructions require that the tensor unit is projective, which is an imperfect indication of semisimplicity \cite[Corollary 4.2.13]{EGNO}.

In Corollary \ref{cor:FHofE2inPr} we extend the skein-theoretic description of factorization homology to a larger family of $\Disk_2$-algebras in $\Pr$. 
%In particular, for an $\Disk_2$-algebra $\EE$ in $\Pr$ obtained as the free cocompletion of a ribbon category, its factorization homology is computed as the free cocompletion of skein categories. 
%This condition imposes that the monoidal unit of $\EE$ is compact-projective, and this is quite restrictive.
Namely, we obtain a description for any disk algebra in $\Pr$ whose assignment to a disk is a \emph{cp-ribbon} category, i.e. is cocomplete, \emph{cp-rigid} (has enough compact-projective objects, all of which are dualizable), and whose balancing induces a ribbon structure on the subcategory of dualizable objects.

This is a generalization of \cite{CookeExcision} because the free cocompletion of a ribbon category in $\Cat$ meets these conditions, but not every cp-ribbon $\Disk_2$-algebra in $\Pr$ comes from such a free cocompletion.

It turns out that cp-ribbon captures exactly the conjectural conditions for a $\Disk_2$-algebra in $\Pr$ to define a fully extended categorified 3-TQFT. 

\paragraph{Topological Quantum Field Theories}

Skein categories were introduced in \cite{WalkerNotes} to describe Crane--Yetter 4-TQFTs on surfaces (see \cite{JohnsonFreydHeisenbergPicture, CookeExcision,GJS} for more formal treatments), but not every choice of ribbon category is expected to correspond to a fully extended four-dimensional skein theory.

Thanks to their agreement with factorization homology and the work of \cite{Scheimbauer}, it is known that skein categories are part of a twice-categorified fully-extended 2-TQFT.
In Theorem \ref{thm:its-a-TQFT} we describe the three-dimensional part of this TQFT in a non-semisimple context, but do not discuss its extension to dimensions zero, one, or four.

In the current work we are interested in those disk algebras that are expected (see \cite[Conjecture 9.10]{JohnsonFreydHeisenbergPicture}) to define a fully extended categorified three-dimensional theory. 
According to the cobordism hypothesis \cite{Baez_Dolan_1995,LurieCob}, these are exactly those $\Disk_2$-algebras in $\Pr$ which are 3-dualizable and 3-oriented in an appropriate $(\infty,4)$-category (see \cite{JFS} for its formal definition.)
The family of disk algebras whose factorization homology we describe skein-theoretically is delineated by a combination of the definition of \emph{cp-rigid} that characterizes 3-dualizability in \cite{BJS,BJSS}, and the existence of the ribbon structure expected to be equivalent to 3-oriented (this is still an open question, but see \cite{SchommerPries14, Scheimbauer}.)

In future work we will be interested in those categorified oriented 3-TQFTs which extend to oriented 4-TQFTs.
These should correspond via the cobordism hypothesis to 4-dualizable 4-oriented objects. 
Walker's original skein-theoretic description \cite{WalkerNotes} of Crane--Yetter theories extends to 4-manifolds under the assumption that the input ribbon category is fusion. %, and he explains that the 4-TQFT should recover Crane--Yetter theories.
%He also explains that it should recover Witten--Reshetikhin--Turaev 3-TQFTs ``at its boundary". 
Being fusion and braided is a sufficient \cite[\S 5.6]{BJS} but not necessary condition for being 4-dualizable.
%However, it is also shown in \cite{BJSS} that there are other, non-semisimple examples of 4-dualizable categories.
It is shown in \cite{BJSS} that Ind-completions of non-semisimple modular tensor categories are 4-dualizable, even though they are not fusion.
% Examples of non-fusion 4-dualizable categories are given by Ind-completions of non-semisimple modular tensor categories.
They have non-compact-projective unit and therefore cannot be obtained as the free cocompletion of a ribbon category\footnote{In fact, it follows from \cite{HaiounUnit, StewartPhD} that 4-dualizability of the free cocompletion of a $\Disk_2$-algebra in $\Cat$ \emph{imposes} semisimplicity.}.
The three and four dimensional part of the associated non-semisimple Crane--Yetter theories are constructed using modified skein theory and admissible skein modules \cite{DGGPR, CGHP}.

Giving a skein theoretic description to the two dimensional part of this theory was one of the main motivations for the current article, see Corollary \ref{cor:FHofE2inPr}.
We caution however that simply having a description of what a fully extended theory assigns to spaces of every dimension is far from a complete description. The missing information involves the many coherence and gluing relations demanded of a TQFT.
%These coherence properties must be relegated to a later work.

%It is known however that it should agree with factorization homology.
%As we have mentioned, in this case being the free cocompletion of an $\Disk_2$-algebra in $\Cat$ really imposes semisimplicity. 
%Note that even when $\AA$ is a non-semisimple ribbon tensor category, one could consider the usual skein category $\SkCat_\AA$. By \cite{CookeExcision}, its free cocompletion is the factorization homology of $\widehat \AA$ in $\Pr$. However, it follows from \cite{HaiounUnit} and to-appear results of William Stewart that $\widehat\AA$ is never fully dualizable. Unfortunately, this means it does not fit into the framework of fully extended TQFTs. One really wants to consider the Ind-completion of $\AA$, which can be fully dualizable by \cite{BJSS}.
%On the other hand, the Ind-completion of $\AA$ is known to be fully dualizable when $\AA$ is modular \cite{BJSS}.

%A skein-theoretic description of factorization homology of $\Disk_2$-algebras coming from Ind-completions of non-semisimple ribbon tensor categories was anticipated, because of the known skein theoretic description of the 3 and 4-dimensional part of the theory \cite{DGGPR, CGHP}, but not known. Filling this gap was one of the main motivations for this paper, see Corollary \ref{cor:FHofE2inPr}.

\iffalse
%Factorization homology of balanced braided categories in $\Pr$ over surfaces are studied in \cite{BBJ}, for both semisimple and non-semisimple inputs. 
%They show that factorization homology on a non-closed surface is described as modules over a ``moduli algebra". 
%They use excision properties of factorization homology to give an explicit presentation of this algebra. 
%See \cite{BBJ2, GJS} for a discussion on closed surfaces.
%Cooke's result was exploited in \cite{GJS, HaiounStated} to give skein-theoretic counterpart of \cite{BBJ}'s result on the examples of the braided categories of locally finite modules over quantum groups $\mathcal{U}_q(\mathfrak g)$.
%These results prove to be powerful, and in particular show gluing for stated skein algebras \cite{HaiounStated} and for (some versions of) stated skein modules \cite{GJS} for any $\mathfrak g$, which is non-obvious from the standard topological approach. 
%However, on these examples Cooke's result applies only at generic $q$, i.e. in the semisimple case.
\fi

\subsection{Goals}
Here we state the main questions which motivated and organize this paper.
\newtheorem{question}{Question}
\newtheorem{subquestion}{Question}
\renewcommand{\thesubquestion}{\thequestion\alph{subquestion}}
\setcounter{question}{1}

\begin{subquestion}\label{question:SkCat}
    What is the appropriate adaptation of skein categories assigned to surfaces to non-semisimple ribbon categories?
\end{subquestion}
We argue that modified skein categories with coefficients in a tensor ideal $\II$ of a braided tensor category $\AA$ (Definition \ref{def:modified-skein-cat}) are the natural adaptation, given their relation with factorization homology (Theorem \ref{thm:main-result}).
An algebraic formulation of Question \ref{question:SkCat} could be stated as follows:
\begin{subquestion}\label{question:FH}
Is there a skein theoretic description of the factorization homology of a cp-ribbon category whose unit is not compact-projective? %(i.e. of a 3-dualizable and 3-oriented object in the 4-category $\mathrm{Al}_2(\Pr)$?)
\end{subquestion}
From this standpoint, Theorem \ref{thm:main-result} (more precisely its reformulation in Corollary \ref{cor:FHofE2inPr})
states that % free cocompletions of?
 $\II$-skein categories give more concrete descriptions of an existing theory: the factorization homology of non-semisimple categories.
Yet another way of phasing this question is: %On a special case all these questions assemble into:
\begin{subquestion}\label{question:4TQFT}
    What does the fully-extended non-semisimple Crane--Yetter theory assign to surfaces?
\end{subquestion}
\setcounter{subquestion}{0}

Even though such a fully extended theory is not rigorously defined yet, see \cite{WalkerNotes} for a sketch in the semisimple case, our results suggest that skein categories of tensor ideals model their values on surfaces.

\begin{question}\label{question:its-FH}
Can we give a version of the proof \cite[Theorem 2.28]{CookeExcision} which does not appeal to excision and which holds in non-semisimple settings?
\end{question}

This is accomplished by the proof of Theorem \ref{thm:main-result}, which drives many constructions in this paper.

\subsection{Results}
In Definition \ref{def:modified-skein-cat} we introduce the skein category associated to a tensor ideal $\II$ in a ribbon category $\AA$ which we call the $\II$-skein category\footnote{We remove $\AA$ from the notation as our notion ultimately only depends on $\II$. The presence of $\AA$ is a nice way to express a list of conditions $\II$, see Remark \ref{rmk:dep_on_II}.
%on the balanced braided non-unital monoidal category $\II$, namely its presheaf category $\widehat{\II}$ is unital and is cp-ribbon in a sense precised below.
}.
The motivating example of such an ideal is the subcategory of projective objects in a non-semisimple ribbon tensor category, in which case we propose our construction is the one sought in Question \ref{question:SkCat}. 

Our main technical result is that $\II$-colored skein categories compute factorization homology:
\newtheorem*{thm:main-result}{Theorem \ref{thm:main-result}}
\begin{thm:main-result}\it
    There is an equivalence of symmetric monoidal 2-functors from $\Surf$ to $\Bimod$ $$\displaystyle \int_-\II \simeq \SkCat_\II(-)$$ between factorization homology with coefficients in $\II$ seen as an $\Disk_2$-algebra in $\Bimod$ and the functor associated to the $\II$-skein category construction.
\end{thm:main-result}
This suggests that $\SkCat_\II$ is the canonical adaptation of skein categories to the non-semisimple setting, in the sense that it satisfies the same universal properties. 
%They coincide with the full subcategory of the usual skein category spanned by ``admissible" objects. 
Our definition closely follows what is assigned to surfaces by the TQFTs defined in \cite{BCGP, DGGPR}, and in \cite{MSWYnonssStringNet} in one dimension lower. 
%Giving a skein-theoretic computation of factorization homology and therefore skein-theoretic counterparts of the results of \cite{BBJ} in the non-semisimple setting is the second main motivation of this paper. 

%Let $\Sigma$ be compact surface, possibly with boundary, and $\II$ a tensor ideal in a ribbon category $\AA$ (i.e. a full subcategory stable under tensor products by objects of $\AA$ and under retracts). 
%We define the modified skein category $\SkCat_\II(\Sigma)$ of a surface $\Sigma$ with coefficients in $\II$ in the usual way but asking that there is at least one $\II$-colored object per connected component, see Definition \ref{def:modified-skein-cat}. 
%We can rephrase this in $\Pr$ using the cocontinuous embedding $\widehat{(-)}: \Bimod \inj \Pr$ given by taking presheaf categories and get an equivalence $$\displaystyle \int_-\widehat\II \simeq \widehat{\SkCat_\II}(-)$$between factorization homology in $\Pr$ of the $\Disk_2$-algebra $\widehat\II$ in $\Pr$ and presheaf categories of the $\II$-skein categories.
We emphasize that the version of skein categories introduced in this work gives a 2-functor from surfaces and embeddings to $\Bimod$, not $\Cat$. 
This seemingly minor detail powers many of our constructions and proofs, see Remark \ref{rmk:fake-empty}. Functoriality in $\II$ is discussed in Remark \ref{rmk:functoriality_in_II}.

Reformulating our main result, we answer Question \ref{question:FH}:%Moreover we can identify which $\Disk_2$-algebras of $\Pr$ we describe factorization homology of:
\newtheorem*{cor:FHofE2inPr}{Corollary \ref{cor:FHofE2inPr}}
\begin{cor:FHofE2inPr}{\it
    Let $\EE$ be a \emph{cp-ribbon} category in $\Pr$, i.e. $\EE$ is balanced, braided, cp-rigid, and its balancing is a ribbon structure on its subcategory of dualizable objects. 
Let $\II:=\EE^{cp}$ denote the subcategory of compact-projective objects. 
We have an equivalence of symmetric monoidal 2-functors $$\displaystyle \int_-\EE \simeq \widehat{\SkCat}_{\EE^{cp}}(-)$$
     between factorization homology in $\Pr$ with coefficients in $\EE$ and free cocompletions of $\II$-skein categories.
   }
\end{cor:FHofE2inPr}

Corollary \ref{cor:FHofE2inPr} echos a stronger prediction by David Jordan: that the 3-TQFT associated to these cp-ribbon categories is described by admissible skein modules. 
%In the case of the disk this partial completion would be Ind-completion, but in general it is not very easy to describe.  
%Factorization homology naturally assigns an algebra to surfaces. 
%Indeed, the inclusion of the empty surface in any surface $\Sigma$ gives a distinguished element (or a presheaf) $\Dist_\Sigma$ in the factorization homology over $\Sigma$. 
%In the case considered in \cite{CookeExcision} the endomorphisms of this distinguished object coincides with the $\AA$-skein algebra of the surface $\Sigma$. %with coefficients in the ribbon category $\AA$.

By analogy with the traditional construction in both skein theory and factorization homology, we define the modified skein algebra with coefficients in $\II$ of a surface $\Sigma$ to be the endomorphism algebra of the distinguished presheaf $\Dist_\Sigma \in \widehat{\SkCat}_\II(\Sigma)$ induced by the inclusion of the empty surface $\emptyset \inj \Sigma$: $$\SkAlg_\II(\Sigma) := \End_{\widehat{\SkCat}_\II(\Sigma)}(\Dist_\Sigma)\ .$$
We show (Remark \ref{rmk:mod-SkAlg-acts-on-adm-SkMod}) that this algebra acts %in a familiar way
on admissible skein modules. The action of the usual skein algebra on admissible skein modules factors through this action.
This construction is natural with respect to embeddings of surfaces, so a surface's mapping class group acts on its $\II$-skein algebra by algebra homomorphisms.

For connected surfaces with non-empty boundary we can use the algebraic tools developed for factorization homology in \cite{BBJ} to get an algebraic formulation of $\II$-skein algebras as follows. 
First, the $\II$-skein algebra is an invariant subalgebra of the internal endomorphism algebra \cite[Def. 3.9 (5)]{BBJ} of the distinguished object $\underline{\End}(\Dist_\Sigma)$, see also \cite[Def. 5.3]{BBJ}.
They show that the moduli algebra is isomorphic as an algebra object in $\widehat{\mathcal{I}}$ to a tensor power of Lyubashenko's coend $\LL$ \cite{LyubashenkoModtrasnfoTensorCats, Lyub3mfldProjMCG}, leading to the formula for our $\II$-skein algebras in Corollary \ref{cor:ModSkAlgIsInvCoend}.
Using the results of \cite{BBJ2}, we obtain a similar description for surfaces with empty boundary via quantum Hamiltonian reduction.

We also construct $\II$-skein modules of 3-cobordisms. 
%The cobordism induces a bimodule between the skein categories, from which there's a standard procedure for getting an associated bimodule over the boundary skein algebras. %induced by a bimodule between skein categories associated to the 3-manifold. %in a way completely parallel to Walker's construction in the semisimple case. 
%The $\II$-skein algebra is the special case of the cylinder over a surface. 
Surprisingly, an in contrast to the semisimple case, these modules depend on the boundary decomposition that distinguishes a cobordism from its underlying 3-manifold.
This is because the construction treats incoming and outgoing boundaries differently.
Playing with decompositions of the boundary, we can either recover admissible skein modules or a new notion which allows the empty ribbon graph, see Remark \ref{rmk:mod-SkAlg-acts-on-adm-SkMod}.

\paragraph{Modified Traces}
Modified traces are an iconic features of non-semisimple skein theory.
They are in one-to-one correspondence with linear forms on the admissible skein module of the 3-sphere $S^3$ \cite[Cor. 3.2]{CGPAdmissbleskein} and appear most famously when constructing link invariants in this setting \cite{GeerPatureauTuraevModifiedqdim}.
They are not, however, a major feature in this paper.
% The work here concerns the \emph{origins} of modified traces, as opposed to their \emph{applications}, and reinforces the idea the that the notion of a modified trace is very natural and canonically associated to a non-semisimple braided presentable category.
The work here reinforces the idea that modified traces are a natural notion, as opposed to a useful but ad-hoc construction.

The precise relationship is as follows.
We show that modified skein categories are the canonical extension (i.e. factorization homology) of non-semisimple braided presentable categories to surfaces.
This is the 2-dimensional part of a TQFT whose 3-dimensional part is described by admissible skein modules (conjecturally, this is again the canonical extension, given by the cobordism hypothesis).
%The cobordism hypothesis suggests that there is again a canonical way to extend this theory to 3-manifold.
%This extension is expected to be described by admissible skein modules.
One could extend this skein theoretic TQFT to 4-manifolds via handle attachments.
In order to do so one needs a value for the 4-handle $\mathbb{D}^4:S^3 \to \emptyset$.
This value is given by a modified trace.

\paragraph{Terminology}
Our constructions are close enough to the existing ones\footnote{In particular, when $\II = \AA$ we recover existing constructions of skein algebras, modules and categories.} that we are reluctant to introduce new nomenclature beyond decorating by $\II$ instead of $\AA$.
Still, it's sometimes useful to have a tentative name to distinguish new from old.
A few alternatives for the name ``modified skeins'' were seriously considered.
Admissible or non-unital skeins seemed to be equally good candidates, since the skein modules of 3-manifolds are called admissible in \cite{CGPAdmissbleskein} and the tensor ideal $\II$ is a non-unital ribbon category.
In the end we settled on `modified' because, perhaps surprisingly, both `non-unital' and `admissible' would have been misleading names for the associated skein algebras. 
For one, these algebras are in fact unital.
They also don't have an apparent admissibility condition.
\emph{Any} $\AA$-colored skein in $\Sigma\times [0,1]$ gives an element of the modified skein algebra -- including the empty skein and those without $\II$-colored strands. 

In fact, despite the initial restriction to a tensor ideal, the modified skein algebra in general has \emph{more elements} than the traditionally constructed version, see Section \ref{sec:less-is-more}. Notably they include Lyubashenko's non-semisimple replacement for the Kirby color, which he used to interpret 3-manifold surgery \cite[p.8 and Sec. 7.2]{Lyub3mfldProjMCG}.

Given this, we reserve the term `admissible' for objects such as labellings, ribbon graphs, relative skein modules, and skein relations where there is indeed an evident admissibility condition.
We avoid it for constructions such as the skein algebra or category associated to a surface where these initial restrictions do not manifest as straightforward admissibility constraints.
%This replacement is not an object in the underlying ribbon category, yet shows up unavoidably as a coloring of a skein. More generally, the modified skein algebra contains the ``bichrome" graphs of \cite{DGGPR}.

\subsection{Future directions}
%Note that the conditions of Corollary \ref{cor:FHofE2inPr} are natural. The braided category $\EE$ being cp-rigid is the condition needed in \cite{BJS} to show that it is a 3-dualizable object in the 4-category $\operatorname{BrTens}$. The balancing being a ribbon structure on dualizable objects is conjecturally the condition for the balancing to induce an $SO(3)$-homotopy fixed point structure on $\EE$. Under the cobordism hypothesis, these should correspond to categorified fully extended 3-TQFTs. Moreover, their values on 0, 1 and 2-manifolds are computed by factorization homology \cite{Scheimbauer}. Therefore, we compute exactly the values on 0--2 manifolds of all expected categorified 3-TQFTs in $\operatorname{BrTens}$. It is expected by David Jordan that, following \cite{WalkerNotes,JohnsonFreydHeisenbergPicture} in the semisimple case, the values of these 3-TQFTs on 3-manifolds is given by admissible skein modules.

\paragraph{Connections to TQFTs}
We do not address Question \ref{question:4TQFT} above, one difficulty being that non-semisimple (or even semisimple) Crane--Yetter theories are not yet formally defined as fully extended TQFTs. However, there are two strong arguments for why this theory should assign $\II$-skein categories to surfaces. First, by the uniqueness part of the cobordism hypothesis, this theory would have to agree on surfaces with factorization homology, hence with $\II$-skein categories by the main result of this paper. Second, it was shown in \cite{HaiounWRTdelCY} (using the results of the current paper, see also \cite{WalkerNotes} for the semisimple case) that $\II$-skein categories extend the (3+1)-TQFTs of \cite{CGHP} down to surfaces. There is also announced ongoing work of Reutter--Walker in this direction. 

Using this extension to surfaces, the second author proved in \cite{HaiounWRTdelCY} that Witten--Reshetikhin--Turaev and \cite{DGGPR} 3-TQFTs live at the boundary of Crane--Yetter 4-TQFTs. This is an expected result proposed by Walker, Freed and Teleman \cite{WalkerNotes}. In particular, our study provides some insight into the construction of \cite{DGGPR} TQFTs.

For example, the (2+1)-TQFTs of \cite{DGGPR} have been extended to the circle in \cite{DeRenziDGGPR}. 
However, this once-extended description imposes admissibility conditions on surfaces and surprisingly does not recover all of \cite{DGGPR}. 
We claim that this obstruction comes from the fact that the extension was given with values in $\Cat$, whereas some maps between $\II$-skein categories only exist in $\Bimod$. 
Indeed, according to the boundary condition description of \cite[Sec. 6]{HaiounWRTdelCY}, the theory \cite{DGGPR} on a closed surface $\Sigma$ is obtained by composing the boundary condition $\Vect \to \SkCat_\II(\Sigma)$ induced by the embedding $\emptyset \inj \Sigma$ with the skein module functor on a bounding 3-manifold. However, as we note in Remark \ref{rmk:fake-empty}, the boundary condition $\Vect \to \SkCat_\II(\Sigma)$ is not a functor (a morphism in $\Cat$), but only a pro-functor (a morphism in $\Bimod$).

\paragraph{New elements and relations}
In Section \ref{sec:less-is-more} we describe new elements of $\SkAlg_\II(\Sigma)$, which do not come from elements of the usual skein algebra $\SkAlg_\AA(\Sigma)$, in terms of bichrome graphs.
Using the results of \cite{BBJ, BBJ2} it should be possible to give a much finer description. 
In the special case of categories of modules over a finite-dimensional ribbon Hopf algebra, the moduli algebras and their algebras of invariants are studied in \cite{BFRqmoduliIII} and in some examples an explicit basis is known, see \cite{FaitgSLFsl2}.
%We hope to return to this in future work. 
We hope to make similarly concrete descriptions in future works, and in particular to describe the image, kernel, and cokernel of the map \ref{def:AA-to-II-skalg-map}.

In concrete examples we can show that the claimed ``new" elements coming from bichrome graphs cannot be described by usual skeins.
For example, let $\AA$ be the category of representations of the small quantum group for $SL_2$ at a $2p\, $th root of unity, $\II$ the ideal of projective objects, and $\Sigma$ an annulus.
In this case, the $\II$-skein algebra is $3p{-}1$-dimensional while the image of the map $\SkAlg_\AA(\Sigma) \to \SkAlg_\II(\Sigma)$ is $2p$-dimensional.
See \cite{FaitgSLFsl2} and references therein for explicit computations, noting that they call elements of $\SkAlg_\II(\Sigma)$ symmetric linear forms and call elements in the image of $\SkAlg_\AA(\Sigma) \to \SkAlg_\II(\Sigma)$ characters. To make the connection precise one uses that the algebra of symmetric linear forms and the modified skein algebra of the annulus are both isomorphic to the subalgebra of invariants in the canonical coend.

\paragraph{Stated skeins}
%It would also be natural to generalize to non-semisimple context 
It is natural to ask for a non-semisimple generalization of the main result of \cite{HaiounStated}, which shows that \cite{BBJ}'s moduli algebras are isomorphic to \cite{LeStated}'s stated skein algebras for the ribbon category of $\mathcal{U}_q(\mathfrak{sl}_2)$-modules at generic $q$. 
After discussing with Francesco Costantino and Matthieu Faitg, we have come to believe that the notion of \emph{stated $\II$-skein algebras} should coincide with stated $\AA$-skein algebras when there are marked points in every connected component and $\II= \operatorname{Proj}(\AA)$. 
The marked points in some sense flatten the theory, quelling differences between the modified/usual construction. 
There might be a derived explanation for this phenomenon, linked with the fact that representations varieties are smooth whereas character varieties can be singular.

\subsection*{Acknowledgements} We are grateful to Francesco Costantino, Matthieu Faitg, Theo Johnson-Freyd, David Jordan, and Eilind Karlsson for enlightening conversations which informed and motivated this work. 
We thank Lukas M\"uller, Lukas Woike, and Adrien Brochier for kindly sharing ideas for a proof of our main result. 
Although we ended up taking an alternative approach, such discussions were a source of motivation and inspiration.

Earnest discussion of this project began while both authors were at a meeting of the Simons Collaboration on Global Categorical Symmetry at the SwissMap Research Station.
We are grateful to the funders and organizers of that event for providing us with a productive and agreeable atmosphere in which to pursue new work.

JB is funded by the Simons Foundation award 888988 as part of the Simons Collaboration on Global Categorical Symmetry, and partially supported by the National Science Foundation under Award No. 2202753.

\paragraph{Note}  Shortly after the prepublication of this paper, we learned about independent work of Runkel, Schweigert and Tham \cite{RunkelSchweigertThamExciAdmSkeins} obtaining similar results, in particular excision properties of admissible skein modules. 
%We thank the authors of that paper for interesting discussions. 
We encourage the interested reader to take a look at their paper for further discussion on this subject.

\section{Topological constructions}
We introduce $\II$-colored skein categories and algebras of surfaces, and $\II$-colored skein modules of 3-manifolds. 
They take as algebraic input a \emph{tensor ideal} $\II$ in a small $\Bbbk$-linear ribbon category $\AA$, i.e. a full subcategory stable under tensoring with any object of $\AA$ and under taking retracts. This implies that $\II$ is stable under taking duals.

We do not need $\AA$ to be abelian or to be tensor in the sense of \cite{EGNO}, and eventually find that our $\II$-skein categories do not depend on $\AA$ itself, only $\II$. (See Remark \ref{rmk:dep_on_II}.)
Tensor ideals appear frequently in the non-semisimple setting, as a defining input to both modified traces \cite{GKPgeneralizedTraceModdim} and admissible skein modules \cite{CGPAdmissbleskein}. 
As mentioned in the introduction, it is typical to consider the ideal of projective objects $\operatorname{Proj}\AA \subseteq \AA$.
%Note that if $\AA$ is semisimple with simple unit, it has no non-trivial ideal (this is because the ideal of projective objects is the smallest non-zero ideal ).
This is in part because $\operatorname{Proj}\AA$ is contained in every non-zero ideal whenever the monoidal unit is simple (see e.g. \cite[Lemma 17(c)]{Geer_Patureau-Mirand_Virelizier_2012}.)
Since by definition every object in a semisimple category is projective, this means that semisimple ribbon categories with simple unit have no non-zero proper tensor ideals.

\subsection{Skein categories}\label{sec:mod-skein-cats}
Skein categories were informally introduced in \cite{WalkerNotes} and formally defined in \cite[Def. 9.3]{JohnsonFreydHeisenbergPicture}.
We also use the descriptions from \cite{CookeExcision} and \cite{GJS}.
To avoid confusion, we will not reintroduce the usual notion of skein categories and whenever we say skein categories we mean the modified version of Definition \ref{def:modified-skein-cat}.

Often the objects in skein categories are finite collections of colored oriented framed points in the surface.
To help with the proof of Theorem \ref{thm:main-result}, we give an equivalent definition in terms of embeddings of disks based on the $\AA$-labelings of \cite[Def. 2.1]{GJS}. 
An embedding of the standard disk specifies a framed oriented point as the image of its center.
Its framing is the positive $x$-axis and it is oriented by the orientation of the embedding. 
%From an embedding of the standard disk, we arrive at a framed oriented point as follows: The center of the standard disk is framed with the positive $x$-axis and is sent by $\iota$ to a of framed point in $\Sigma$. The orientation of this point is determined by whether $\iota$ preserves or reverses orientation.
In the other direction, a collection of framed oriented points in $\Sigma$ corresponds to a contractible space of possible embedded disks. 
\begin{definition}
Let $\Sigma$ be a compact oriented surface, possibly with boundary, $M$ a compact oriented 3-manifold containing $\Sigma$ in its boundary and $\II \subseteq \AA$ a tensor ideal in a ribbon category. 
We also assume that each connected component of $\Sigma$ has a choice of either inward or outward normal.

An \textbf{$\II$-labeling in $\Sigma$} is a pair $(\iota, W)$ where $\iota: d\hookrightarrow \Sigma$ is a (possibly orientation-reversing) embedding of a collection of $\vert d\vert$ standard disks and $W$ is an object of $\II^{\vert d\vert}$.
Let $\vec{X}_\iota$ denote the collection of framed oriented points in $\Sigma$ determined by $\iota$ and colored by the corresponding components of $W$.
%    Framed colored oriented points in $\Sigma$.    These are colored by objects as follows: Each collection of framed points can be identified with an object $d \in \Disk$, so that the framing of each point aligns with the $x$-axis of some disk.    Then the coloring is an object $V \in \AA(d)$. 

An \textbf{$\II$-colored ribbon graph} in $M$ is the image of an embedding  $\Gamma \hookrightarrow M$ of a finite oriented graph $\Gamma$ equipped with a smooth framing.
We require that $\Gamma$ intersects $\partial M$ in $\Sigma$, transversely and only at 1-valent vertices which we call \emph{boundary vertices}.
Each edge of $\Gamma$ is colored by an object of $\II$ and each non-boundary vertex by a morphism as detailed below.
Boundary vertices inherit a color, orientation, and framing from their unique incident edge (we declare that the orientation is positive if the strand is going in the direction of the chosen normal and negative otherwise).
At a non-boundary vertex, the embedding together with the framing induces a cyclic ordering on the incident edges, and we color by an element of $\Hom_\AA(\idty_\AA, V_1^\pm\otimes\cdots\otimes V_n^\pm)$ where $V_1,\dots,V_n$ are the ordered\footnote{Note that the pivotal structure induces a canonical isomorphism  $\Hom_\AA(\idty_\AA, V_1^\pm\otimes\cdots\otimes V_n^\pm)\simeq %\Hom_\AA(V_1^\mp, V_2^\pm\otimes\cdots\otimes V_n^\pm)\simeq
\Hom_\AA(\idty_\AA, V_2^\pm\otimes\cdots\otimes V_n^\pm\otimes V_1^\pm)$, allowing us to upgrade the cyclic ordering on the $V_i$ to a linear ordering.} colors of the incident edges, taken as $V^+=V$ if the edge is outgoing, and $V^-=V^*$ if the edge is incoming.

A ribbon graph is said to be \textbf{compatible} with an $\II$-labeling $(\iota,W)$ if the boundary vertices of $\Gamma$ match the colored oriented framed points $\vec{X}_{\iota}$. See Figure \ref{fig:II_labelings} for an example.
\end{definition}
\begin{figure}
    \centering
\begin{tikzpicture}
    \draw (0,0) arc (0:360:1 and 0.5);
    \fill[gray!10] (0,0) arc (0:360:1 and 0.5);
    \draw (-0.3,0) arc (0:360:0.2 and 0.15);
    \fill[gray!30] (-0.3,0) arc (0:360:0.2 and 0.15);
    \draw (-1.2,0) arc (0:360:0.3 and 0.2);
    \fill[gray!30] (-1.2,0) arc (0:360:0.3 and 0.2);
    \draw (0,-1.5) arc (0:360:1 and 0.5);
    \fill[gray!10] (0,-1.5) arc (0:360:1 and 0.5);
    \node at (-0.5,0) {\small $Y$};
    \node at (-1.5,0) {$X$};
    \begin{scope}[xshift = 4cm]
    \draw (0,-1.5) arc (0:360:1 and 0.5);
    \draw (0,0) rectangle (-2,-1.5);
    \fill[gray!10] (0,-1.5) arc (0:360:1 and 0.5);
    \fill[gray!10] (0,0) rectangle (-2,-1.5);
    \draw (0,0) arc (0:360:1 and 0.5);
    \fill[gray!10] (0,0) arc (0:360:1 and 0.5);
    \draw (-0.3,0) arc (0:360:0.2 and 0.15);
    \fill[gray!30] (-0.3,0) arc (0:360:0.2 and 0.15);
    \draw (-1.2,0) arc (0:360:0.3 and 0.2);
    \fill[gray!30] (-1.2,0) arc (0:360:0.3 and 0.2);
    \node at (-0.5,0) {\small $Y$};
    \node at (-1.5,0) {$X$};
    \draw[thick] (-0.5,0) .. controls (-0.8,-1.2).. (-1,-1.2) node[near start, sloped]{$>$} node{$\bullet$} node[below]{$\scriptstyle f:\idty \to X\otimes Y$};
    \draw[thick] (-1.5,0) .. controls (-1.2,-1.2).. (-1,-1.2) node[near start, sloped]{$<$};
    \end{scope}
\end{tikzpicture}
        \caption{Left: an $\II$-labeling on the disjoint union of two disks. It is not admissible because the lower disk is unlabeled. Right: a compatible $\II$-ribbon graph in the thickened disk. The chosen normal on the two disks is going up on the picture. It is admissible.}
    \label{fig:II_labelings}
\end{figure}

A central result of \cite[Thm 5.1]{RT90}, \cite[Theorem 2.5]{TuraevBook} is the definition of the Reshetikhin-Turaev evaluation $\RT_\AA$ that sends an $\AA$-colored (and therefore $\II$-colored) ribbon graph in the thickened standard disk $\mathbb D \times [0,1]$ compatible with two $\II$-labelings $(\iota,W)$ and $(\iota',W')$ in $\mathbb D \times \{0,1\}$ to a morphism in $\AA$ from the tensor product of the $W$s to the tensor product of the $W'$s.
Note that if $d = \emptyset$ or $d' = \emptyset$ this tensor product may not be in $\II$.
To avoid this and similarly troublesome situations, we introduce the following admissibility condition.

\begin{definition}
Let $\II \subseteq \AA$ be a tensor ideal in a ribbon category and $\Sigma$ a compact oriented surface, possibly with boundary. 
An $\II$-labeling $(\iota,W)$ in $\Sigma$ is called \textbf{admissible} if $\iota: d\hookrightarrow \Sigma$ is surjective on connected components, i.e. $\pi_0(\iota):\pi_0(d)\to\pi_0(\Sigma)$ is surjective.\\
Similarly an $\II$-colored ribbon graph $\Gamma$ is called \textbf{admissible} if $\Gamma \inj M$ is surjective on connected components. See Figure \ref{fig:II_labelings} for an example.
\end{definition}
We now have all the ingredients to define admissible skein modules.
\begin{definition}\label{def:rel-skein-mod} Let $M$ be a compact oriented 3-manifold with $\Sigma\subseteq \partial M$ as above. Given an $\II$-labelings $(\iota,W)$ in $\Sigma$, the \textbf{relative admissible skein module} $\Sk_\II(M;(\iota,W))$ is the vector space freely generated by {isotopy classes} of admissible $\II$-colored ribbon graphs in $M$ compatible with $(\iota,W)$ quotiented by the following \textbf{admissible skein relation}:

Consider a finite family of $\II$-colored ribbon graphs $(T_i)_{i\in I}$ and an oriented embedded standard cube $\varphi:[0,1]^3\inj M$ such that the $T_i$'s coincide strictly outside this cube, intersect its boundary transversely, and only intersect the top and bottom faces. 
We ask moreover that this intersection is non-empty. 
Each $T_i\cap \mathrm{im}\ \varphi$ is sent by the Reshetikhin--Turaev functor to a morphism $\RT_\AA(T_i\cap \mathrm{im}\ \varphi)$ in a common $\Hom$-space in $\AA$.
    
We say $$\sum_{i\in I} \lambda_i T_i \sim 0 \quad\text{ if }\quad \sum_{i\in I} \lambda_i \RT_\AA(T_i\cap \mathrm{im}\ \varphi)=0.$$
\end{definition}

We will often take $M = \Sigma \times [0,1]$ for a compact surface $\Sigma$, possibly with boundary, with $\Sigma\times \{0,1\}\subseteq \partial M$. 
As orientation data we choose the inward normal for $\Sigma\times \{0\}$ and outward normal for $\Sigma\times \{1\}$. 
For two $\II$-labelings $(\iota,W)$ in $\Sigma\times\{0\}$ and $(\iota',W')$ in $\Sigma\times\{1\}$, we abbreviate $\Sk_\II(\Sigma \times [0,1],(\iota\sqcup\iota',(W,W'))$ as $\Sk_\II(\Sigma,(\iota,W),(\iota',W'))$. 
Note that if either $(\iota,W)$ or $(\iota', W')$ is admissible, then every compatible ribbon graph in $\Sigma \times [0,1]$ is automatically admissible. Similarly, in this case every skein relation is isotopic to an admissible one. 
The relative admissible skein module is then simply the usual relative skein module.
\begin{remark} 
We follow the definition of admissible skein modules of \cite{CGPAdmissbleskein} but allow non-empty boundary objects.
One difference though is that they allow some colors of the ribbon graphs to be in $\AA$, and only ask that at least one edge per connected component is in $\II$. 
The resulting notion is equivalent to ours by a standard trick. 
One can always run the $\II$-colored strand next to every other and fuse them. 
The resulting color is in $\II$ because $\II$ is a tensor ideal. 
This operation is readily checked to be well-defined up to admissible skein relations. 
Similarly our notion of admissible skein relation is easily checked to be equivalent to theirs, by isotoping an $\II$-colored strand outside the cube to intersect one of its faces.

%When $d = \emptyset$, the only substantial difference between our relative admissible skein module and the admissible skein modules of \cite{CGPAdmissbleskein} is that they allow some colors of the ribbon graphs to be in $\AA$, and only ask that at least one edge per connected component is in $\II$. 
\end{remark}
\begin{definition}\label{def:modified-skein-cat}
Let $\Sigma$ be a compact oriented surface, possibly with boundary, and $\II$ a tensor ideal of $\AA$. 
The \textbf{modified skein category $\SkCat_\II(\Sigma)$ of the surface $\Sigma$ with coefficients in $\II$} has:
\begin{description}
    \item Objects: Admissible $\II$-colored disks in $\Sigma$
    \item Morphisms: The set of morphisms from $(\iota,W)$ to $(\iota',W')$ is the admissible relative skein module $\Sk_\II(\Sigma;(\iota,W),(\iota',W'))$
    \item Composition: Vertical stacking (with any isotopic choice of smoothing at the gluing).
\end{description}
 %The empty embedding is the unique object of the $\II$-skein category of the empty surface, but is not an object of any other. 
\end{definition}
When context permits, we will sometimes shorten $\SkCat_\II$ to $\SkCat$ and $\II$-colored ribbon graphs to ribbon graphs. 
Without loss of generality, we can assume that the embeddings underlying objects of $\SkCat(\Sigma)$ preserve orientation. 
Any object $(\iota,W)$ with orientation-reversing $\iota$ is canonically isomorphic to $(\bar\iota,\bar W)$, where $\bar\iota$ is the orientation-preserving mirror of $\iota$ and $\bar W$ is the dual of $W$. 
% We will make frequent and implicit use of this fact.
\begin{remark}\label{rmk:when-II-is-AA-skcat}
If $\II$ happens to contain the unit, i.e. $\II=\AA$, we recover the usual notion of skein categories. 
In this case every $\II$-labeling is isomorphic to an admissible one because any non-empty collection of embeddings with colors the monoidal unit behaves like the empty object. 
%In particular, the presheaf described in the remark above is representable. 
\end{remark}
Let us now discuss functoriality in $\Sigma$ of the $\II$-skein category. Functoriality of the usual % traditional (i.e. unital)
$\SkCat$ construction is used heavily in works such as \cite{GJS,CookeExcision}.
The $(2,1)$-categories $\Bimod$ and $\Surf$ feature prominently in our discussions, and we recount their definitions here.

See \cite{Andre_1966,benabou1973distributeurs} for early works on $\Bimod$ and \cite[Ch. 7]{Borceux_1994} \cite[Ch. 5]{Loregian_2021} for textbook accounts, noting that \emph{distributor} and \emph{profunctor} are both terms for the 1-morphisms of $\Bimod$. 
\begin{definition}\label{def:Bimod}
The (2,1)-category $\Bimod$ of $\Bbbk$-linear categories and bimodules has
    \begin{description}
        \item Objects: small $\Bbbk$-linear categories $\CC,\DD$
        \item 1-Morphisms: $\CC$-$\DD$-bimodules, i.e. functors $\CC\otimes \DD^{op} \to \Vect$ or equivalently functors $\CC\to \Fun(\DD^{op},\Vect)$. Here $\Vect$ is the cocomplete category of not-necessarily finite dimensional vector spaces.
        The composition of two bimodules $F:\CC\otimes \DD^{op} \to \Vect$ and $G:\DD\otimes \EE^{op} \to \Vect$ is 
        %the bimodule $G\circ F:\CC\otimes \EE^{op} \to \Vect$
        given by the coend (see \cite[\S IX.6]{MacLane_1978}, \cite{Loregian_2021}) $$(G\circ F)(C,E) := \int^{D\in\DD} F(C,D)\otimes G(D,E)\ .$$

        The identity on $\CC$ is $\Hom_\CC(-,-)$.
        
        \item 2-Morphisms: natural isomorphisms
    \end{description}
    Note that the co-completeness of $\Vect$ is crucial in the above definition, as it guarantees the existence of the colimits used when composing 1-morphisms.
A symmetric monoidal structure is induced by cartesian product on objects and tensor product on Hom-spaces \cite[Section 7]{DayStreet1997}.
\end{definition}
$\Bimod$ embeds as a full subcategory in the symmetric monoidal bicategory $\Pr$ of presentable linear categories equipped with Kelly-Deligne tensor product, see \cite{BCJReflDualPr, GJS} for a modern introduction.
A category $\CC \in \Bimod$ is mapped to its presheaf category $\widehat \CC := \Fun(\CC^{op},\Vect)$.
There is likewise a symmetric monoidal inclusion $\Cat \to \Bimod$ given on objects by the identity and on functors $\CC\to \DD$ by post-composition with the Yoneda embedding $\DD\to \widehat\DD:= \Fun(\DD^{op},\Vect)$.
% Next, we define the source category $\Surf$ used in Theorem \ref{theorem:skcats-a-2-functor}:

\begin{definition}
The (2,1)-category $\Surf$ of surfaces has:
    \begin{description}
        \item Objects: compact oriented surfaces, possibly with boundary and non-connected. The empty surface is permitted.
        \item 1-Morphisms: orientation-preserving embeddings (these are not required to send the boundary to the boundary).
        \item 2-Morphisms: isotopies considered up to higher isotopy. 
Going forward we will suppress higher isotopies. 
    \end{description}
    Note that $\Surf$ is symmetric monoidal under disjoint union.
\end{definition}

\begin{theorem}\label{theorem:skcats-a-2-functor}
    The assignment $\Sigma \mapsto \SkCat_\II(\Sigma)$ can be extended to a symmetric monoidal 2-functor $$\SkCat_\II : \Surf \to \Bimod.$$
    %In particular, $\widehat\II$ is unital.
\end{theorem}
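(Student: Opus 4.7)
The plan is to build $\SkCat_\II$ from cylindrical cobordisms. For an embedding $f:\Sigma \hookrightarrow \Sigma'$, view $\Sigma' \times [0,1]$ as a 3-cobordism where the top $\Sigma'\times\{1\}$ (with outward normal) receives the pushforward of labelings from $\Sigma$ via $f\times \id$, and the bottom $\Sigma'\times\{0\}$ (with inward normal) receives labelings from $\Sigma'$. I define the bimodule functor $\SkCat_\II(f):\SkCat_\II(\Sigma)\otimes \SkCat_\II(\Sigma')^{op}\to \Vect$ on objects by
\[
(X,Y) \;\longmapsto\; \Sk_\II\bigl(\Sigma' \times [0,1];\ Y,\ f_*(X)\bigr),
\]
where $f_*(X)=(f\circ\iota,W)$. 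Although $f_*(X)$ need not be admissible in $\Sigma'$, admissibility of $Y$ on the bottom ensures every compatible ribbon graph and every skein-relation cube meets each connected component of $\Sigma'\times[0,1]$, so the relative admissible skein module is well-defined. Covariance in $X$ is pushforward by $f\times\id$ followed by stacking on top; contravariance in $Y$ is stacking at the bottom. An isotopy $f\simeq f'$ extends to an ambient isotopy of $\Sigma'$, whose trace carries $f_*(X)$ to $f'_*(X)$ and yields an isomorphism of skein modules, giving the required 2-morphism. The identity embedding gives exactly the Hom-bimodule $(X,Y)\mapsto \Sk_\II(\Sigma\times[0,1];Y,X) = \Hom_{\SkCat_\II(\Sigma)}(Y,X)$, so identities are preserved.

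The central step is the compositor. For $\Sigma \xrightarrow{f} \Sigma' \xrightarrow{g} \Sigma''$, bimodule composition unfolds as the coend
\[
(g_*\circ f_*)(X,Z) \;=\; \int^{Y\in\SkCat_\II(\Sigma')} \Sk_\II(\Sigma'\times[0,1];Y,f_*X)\;\otimes\; \Sk_\II(\Sigma''\times[0,1];Z,g_*Y),
\]
and we need a canonical natural isomorphism with $\Sk_\II(\Sigma''\times[0,1];Z,(gf)_*X)$. The forward map sends $(T,S)$ to $(g\times \id)_*T$ stacked on top of $S$; the naturality in strands $Y\to Y'$ is dinatural (absorb on either side), so this descends to the coend. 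The inverse takes a ribbon graph $R$ on the right, applies an isotopy pressing its top portion into a collar $g(\Sigma')\times[1-\varepsilon,1]$ (possible because the top endpoints of $R$ already lie in $g(\Sigma')$), then slices at height $1-\tfrac{\varepsilon}{2}$; the resulting colored framed points in $g(\Sigma')$ pull back to a labeling in $\Sigma'$ which is made admissible by augmenting with a dummy disk colored by any $V\in\II$ in each missed component of $\Sigma'$, matched by a cancelling cup/cap pair. That the two constructions are mutually inverse modulo admissible skein relations is then routine.

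The symmetric monoidal compositor is induced by the natural identification $\Sk_\II(\Sigma_1\sqcup\Sigma_2;\ldots) \simeq \Sk_\II(\Sigma_1;\ldots)\otimes \Sk_\II(\Sigma_2;\ldots)$, together with the component-wise nature of pushforwards under disjoint unions of embeddings; the hexagon and pentagon coherences reduce to associativity of stacking and disjoint union of cylinders. The hard part is the compositor's inverse and the verification that the dummy-disk augmentation is well-defined on coend classes: one must show that different choices of admissible extension in the missed components yield the same element of the coend, using the dinaturality relations applied to the obvious maps between admissible $Y$'s obtained by adding/removing dummy strands. This is precisely where the choice of target $\Bimod$ rather than $\Cat$ is essential, as flagged in Remark \ref{rmk:fake-empty}: in $\Cat$, the missed-component phenomenon would force an empty object to exist, while in $\Bimod$ the distinguished presheaf $\Dist$ plays that role and makes the coend identification sensible.
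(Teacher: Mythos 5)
Your setup coincides with the paper's: the bimodule functor $(X,Y)\mapsto \Sk_\II(\Sigma'\times[0,1];Y,f_*X)$, the observation that admissibility of $Y$ alone makes this well defined, the trace-of-isotopy 2-morphisms, and the forward gluing map to the coend are all exactly the intended construction. The genuine gap is in your inverse to the compositor. After slicing at height $1-\tfrac{\varepsilon}{2}$ you propose to repair admissibility of the middle labeling by inserting, in each missed component of $\Sigma'$, a dummy disk colored by an arbitrary $V\in\II$ ``matched by a cancelling cup/cap pair''. But the cup (in the bottom piece) and the cap (in the top piece) do not cancel: gluing them back produces $R$ together with a new closed $V$-colored unknot disjoint from $R$. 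In admissible skein theory this loop cannot be deleted; the best one can do is apply a skein relation in a cube containing the loop and a nearby strand, which replaces it by the scalar $\dim_q(V)$, and for the motivating ideals $\II=\operatorname{Proj}(\AA)$ this quantum dimension typically vanishes. So the composite (gluing map)$\,\circ\,$(your map) sends $R$ to $R\sqcup\bigcirc_V\neq R$ (often to $0$), i.e.\ you have not produced a preimage of $R$, and the claimed ``routine'' verification fails. The same quantum-dimension factors obstruct your well-definedness argument: the ``obvious maps between admissible $Y$'s obtained by adding/removing dummy strands'' are cups and caps, and dinaturality applied to them does not identify the classes you need it to identify.

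The correct repair uses the admissibility of $R$ itself rather than new components: each missed component $C$ of $\Sigma'$ sits inside some component of $\Sigma''\times[0,1]$, which contains a strand of $R$; by rigidity one can drag a finger of that existing strand along an arbitrary path so that it crosses $g(C)\times\{1-\tfrac{\varepsilon}{2}\}$. The slice is then admissible, the top piece acquires a cap whose color is that of the pulled strand (not an arbitrary $V$), and retracting the finger shows the glued composite is isotopic to $R$ on the nose. Independence of the chosen strand and path, and compatibility with isotopies of $R$, is exactly the double-pull-and-retract argument of Lemma~\ref{lemma:push_skeins_wherever}, which is the technical heart of the paper's proof. Separately, your treatment of isotopies stops at defining the 2-morphism; naturality of $\SkCat_\II(H)$ in the source object (the interchange of the traced isotopy with a skein $T$) still needs the explicit ambient-isotopy argument, but that is a sketch-level omission rather than an error.
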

We give a new proof in the $\II$-colored $\Bimod$ context, but first establish a useful technical result which allows us to preserve admissibility while decomposing $\II$-skeins. 
%The constructions in this lemma will be used thoughout the paper. 
\begin{remark}\label{rmk:fake-empty}
Suppose $\II \neq \AA$.
The admissibility condition brings us to a major departure from the skein categories of \cite{WalkerNotes,JohnsonFreydHeisenbergPicture}. %: if $\Sigma$ is non-empty then we do not have an empty object.%allow the empty collection of disks as an object. 
It is crucial that the statement above has target $\Bimod$ instead of $\Cat$.
% , to accommodating the need for $\II$-labelings when extending $\SkCat_\II(-)$ to a 2-functor.
This detail becomes conspicuous with the inclusion of the empty surface $\emptyset \hookrightarrow \Sigma$, which induces a map $\SkCat_\II(\emptyset) \to \SkCat_\II(\Sigma)$.
The skein category of the empty surface has a single element with endomorphism algebra the base field.
When working with $\Cat$, the induced map would be a functor which would send this one object to the empty object (i.e. no marked points) if it existed.
On the other hand, in $\Bimod$ it is the presheaf-valued functor which lands on the presheaf which \emph{would be} represented by the empty object if, again, $\SkCat_\II(\Sigma)$ had one.

The empty presheaf exists even while the empty object does not because our admissibility condition requires $\II$-labelings in each connected component of the 3-cobordism, but \emph{not each component of its boundary}.
%because given any $X\in \SkCat_\II(\Sigma)$, the relative skein module with bottom boundary $X$ and empty top boundary is well defined.
Therefore we have a distinguished non-representable presheaf which plays the role of the empty object in various constructions: $$\begin{aligned}
        \Dist_\Sigma: \SkCat_\II(\Sigma)^{op} &\to \Vect\\
      X &\mapsto \Sk_\II(\Sigma;X,\emptyset)
    \end{aligned}$$
%\begin{equation}
%  \Sk_\II(\Sigma,-,\emptyset) : \SkCat_\II(\Sigma)^{op} \to \Vect.
%\end{equation}
%Our admissibility conditions mean that the empty object is not defined in the skein category, while the presheaf that \emph{would be} represented by it does.
%Working in Bimod instead of Cat is therefore crucial for extending $\SkCat_\II(-)$ to a 2-functors.

We thank David Jordan for this insight, which allows us to make sense of the inclusion of the empty set $\emptyset\inj \Sigma$ as inducing a morphism between $\II$-skein categories in $\Bimod$. 
Theorem \ref{theorem:skcats-a-2-functor} restricted to disks shows that $\II$ gives this way a unital $\Disk_2$-algebra in $\Bimod$, as opposed to a non-unital $\Disk_2$-algebra in $\Cat$.
A more general discussion on how to turn a non-unital algebra in $\Cat$ into a unital one in $\Bimod$ can be found in \cite{Stroinski}.
\end{remark}

\begin{lemma}\label{lemma:push_skeins_wherever}
    Let $M$ be a compact connected 3-manifold and $N\subseteq M$ a non-empty sub-manifold of dimension 2 or 3 with $\partial N \subseteq \partial M$. 
Define $\Sk_\II^N(M;X)$ the relative admissible skein modules where ribbon graphs must intersect $N$, and isotopies must preserve this property.
    Then the canonical map induces an isomorphism of vector spaces $$\Sk_\II^N(M;X) \tilde\to \Sk_\II(M;X)$$ for any $\II$-labeling $X$ in $\partial M$.
\end{lemma}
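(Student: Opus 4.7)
The plan is to separately establish surjectivity and injectivity of the canonical map $\Sk_\II^N(M;X) \to \Sk_\II(M;X)$, with the central tool being a \emph{pinning} construction. Given an $N$-intersecting admissible ribbon graph $\Gamma$ and any open ball $B \subseteq \mathrm{int}(M)$ meeting $N$, I produce an $N$-preserving ambient isotopy from $\Gamma$ to $\tilde\Gamma$ meeting $N \cap B$ by a finger-move along a thin tubular neighborhood of a smooth path from some $c \in \Gamma$ to $B$. The path is chosen to avoid a pre-existing intersection $p \in \Gamma \cap N$, and with $c \neq p$, so that the intersection at $p$ is preserved throughout and $N$-preservation is automatic.

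Surjectivity is then immediate: any admissible $\Gamma$ is non-empty since $M$ is connected, so the finger-move construction (now without any $N$-preservation constraint) produces an isotopic $\Gamma' \in \Sk_\II^N(M;X)$ mapping to $[\Gamma]$.

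For injectivity, an equality in $\Sk_\II(M;X)$ is realized by a finite chain of elementary moves, namely ambient isotopies and skein relations in embedded cubes. It suffices to lift each such move between $N$-intersecting graphs to $\Sk_\II^N(M;X)$, possibly after a preliminary $N$-preserving isotopy. For a skein relation in a cube $C \subseteq \mathrm{int}(M)$, the graphs $\Gamma_i$ in the relation agree outside $C$. I choose a ball $B \subseteq M \setminus C$ meeting $N$ (available by connectedness of $M$ and properness of $C$) and a pinning path in $M \setminus C$ avoiding the finitely many pre-existing $N$-intersections of the $\Gamma_i$. The same finger-move applied simultaneously to each $\Gamma_i$ preserves the skein relation and produces pinned graphs all meeting $N$ in $B$, giving a lifted relation in $\Sk_\II^N(M;X)$.

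The ambient isotopy case is the main obstacle. Let $\phi_t: \Gamma_0 \leadsto \Gamma_1$ be an ambient isotopy of compact support $K$. When $N \not\subseteq K$, I pin $\Gamma_0$ with a finger whose outer portion lies in $M \setminus K$ and ends at $q \in (M \setminus K) \cap N$, and define $\tilde\Gamma_1 := \phi_1(\tilde\Gamma_0)$; the isotopy $\phi_t(\tilde\Gamma_0)$ is then $N$-preserving because $q$ is fixed by $\phi_t$. The isotopy $\Gamma_1 \leadsto \tilde\Gamma_1$ is realized by the analogous finger-move whose path is $\phi_1$ applied to the original, which one keeps $N$-preserving by choosing $c$ so that $\phi_1(c)$ avoids a pre-existing intersection of $\Gamma_1$ with $N$. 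When $N \subseteq K$, I fragment $\phi_t$ into a finite composition of ambient isotopies with arbitrarily small supports and iterate the previous argument, using pinning to maintain $N$-intersection at the intermediate stages.
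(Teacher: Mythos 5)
Your proposal is essentially sound and runs on the same engine as the paper's proof: pull (``pin'') a strand of an admissible graph so that it meets $N$, then check that isotopies and skein relations can be made compatible with this intersection. The injectivity step, however, is implemented by a genuinely different mechanism. The paper parametrizes a given ambient isotopy, pins each intermediate graph $\overline{T}_s$ along a path $\gamma_s$ chosen so that the pulled strand meets $N$ transversally (this is exactly where the hypothesis that $N$ has codimension at most $1$ enters), and uses openness of that condition, compactness in $s$, and path-independence of the pinning to assemble a chain of $N$-preserving isotopies. You instead exploit the support $K$ of the ambient isotopy: pin at a point $q\in N\setminus K$, which is literally fixed by the isotopy, and when $N\subseteq K$ invoke fragmentation into pieces of small support (an \cite{EK71}-type statement the paper itself uses in Lemma \ref{lemma:relations-are-local}) and re-pin at each stage; this is possible since $\dim N\ge 2$ forces $N$ to have positive diameter. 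This is a legitimate alternative: it trades the transversality/openness/compactness argument for the fragmentation lemma, and since your marked intersection points are pointwise fixed it does not even rely on stability of the intersection under perturbation.

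One step needs repair. In the skein-relation case you assert that a ball $B\subseteq M\setminus C$ meeting $N$ exists ``by connectedness of $M$ and properness of $C$''; this is not guaranteed by the hypotheses. Nothing prevents $N$ from being a closed surface contained in the interior of the cube $C$ (and the cube may also touch $\partial M$, which you implicitly exclude), in which case $N\cap(M\setminus C)=\emptyset$ and your pinning site does not exist; moreover you cannot pin inside $C$, where the graphs $T_i$ of the relation differ. The paper sidesteps this by first confining the relation, up to isotopy, to a small ball contained in or disjoint from $N$; you can patch your argument the same way, or by applying a common ambient isotopy to all the $T_i$ together with the cube (which your isotopy step already handles) before choosing $B$. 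Relatedly, your reduction ``it suffices to lift each elementary move between $N$-intersecting graphs'' should be phrased with more care, since intermediate graphs in a chain of moves need not meet $N$ at all — maintaining a pinned representative at every intermediate stage, as you do in the fragmentation iteration and as the paper does for every $\overline{T}_s$, is precisely what closes that gap.
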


The main argument of the following proof appears in various forms throughout this paper. Note that rigidity is essential here, as we need to drag graphs along arbitrary paths.
\begin{proof}
We start with surjectivity.
Let $T\in \Sk_\II(M;X)$ and choose any isotopy representative $\oT$, not necessarily intersecting $N$. 
Choose any point $p$ on any edge of $\oT$ and any path $\gamma$ in $M\smallsetminus \oT$ starting from $p$ and going through $N$. 
Isotope $\oT$ by pulling a small neighborhood of $p$ along the path $\gamma$. 
The resulting ribbon graph $\oT_{\gamma}$ is a representative of $T$ in $\Sk_\II^N(M;X)$.

Next we show injectivity.
First note that for any other generic choice $p', \gamma'$ we can form the ribbon graph $(\oT_{\gamma})_{\gamma'}$ where we have pulled strands along both paths. 
Retracting either one of them gives isotopies $(\oT_{\gamma})_{\gamma'}\sim \oT_{\gamma}$ and $(\oT_{\gamma})_{\gamma'}\sim \oT_{\gamma'}$ through ribbon graphs intersection $N$, so $\oT_{\gamma}$ and $\oT_{\gamma'}$ represent the same element in $\Sk_\II^N(M;X)$.

Consider an isotopy $\varphi: \oT \Rightarrow \oT'$ between two ribbon graphs intersecting $N$, but possibly not preserving this property. 
By definition it is an ambient isotopy $\varphi = (\varphi_s:M\tilde\to M)_{s\in[0,1]}$ with $\varphi_0=\id_M$, $\varphi_1(\oT)=\oT'$, and $\oT_s:=\varphi_s(\oT)$ is a ribbon graph in $M$, possibly not intersecting $N$. 
Pick any $s\in [0,1]$ and choose a path $\gamma_s$ as above such that $(\oT_s)_{\gamma_s}$ intersect $N$ transversely at least once (this is why we need $N$ of codimension at most 1). 
This condition is open and holds for small perturbations of $(\oT_s)_{\gamma_s}$. 
By compactness we can pick finitely many times $s_i$ such that the isotopy $$(\oT_{s_i})_{\gamma_{i}} \sim (\oT_{s_{i+1}})_{\gamma'_{i}}$$ intersects $N$ throughout (here $\gamma'_i$ is the deformed $\gamma_i$). 
Now as explained above, there is an isotopy $(\oT_{s_{i+1}})_{\gamma'_{i}} \sim (\oT_{s_{i+1}})_{\gamma_{i+1}}$ that maintains at least one intersection with $N$. 
We have given a chain of isotopies from $\oT$ to $\oT'$ intersecting $N$ throughout, so any two ribbon graphs isotopic in $\Sk_\II(M;X)$ are likewise in $\Sk_\II^N(M;X)$. 
Now up to isotopy every skein relation can be confined to happen in a small ball, either contained in $N$ or disjoint from $N$, in particular not affecting the intersection with $N$. 
\end{proof}

With Lemma \ref{lemma:push_skeins_wherever} at our disposal, we can now prove the functoriality result:
\begin{proof}[Proof of Theorem \ref{theorem:skcats-a-2-functor}]
    Definition \ref{def:modified-skein-cat} gives the value on objects of $\Surf$. 
Disjoint union is sent to the tensor product in $\Bimod$ and the monoidal unit $\varnothing$ is sent to the one-object category with endomorphisms $\Bbbk$.
    The bulk of this proof is defining $\SkCat$ on embeddings and isotopies.
\paragraph{Embeddings to Bimodules.}
    Let $\kappa : \Sigma \to \Sigma'$ be an embedding.  We set
        $$\begin{array}{rcll}
        \SkCat(\kappa):\SkCat(\Sigma)& \otimes& \SkCat(\Sigma')^{op}  &\to \Vect\\
X &\otimes& Y &\mapsto \Sk_\II(\Sigma';Y,\kappa_*X)
        \end{array}$$
    %    $\SkCat(\kappa):\SkCat(\Sigma) \to \Fun(\SkCat(\Sigma')^{op},\Vect)$ on an object $X \in\SkCat(\Sigma)$ by     \begin{equation}            \SkCat(\kappa)(X)(-):= \Sk_\II(\Sigma';-,\kappa_*X)    \end{equation}
    where $\kappa_*(\iota,W) := (\kappa\circ \iota,W)$. 
Morphisms $T \subseteq \Sigma\times[0,1]$ and $T' \subseteq \Sigma'\times[0,1]$ transform the admissible relative skein module by post-composition with $\kappa_*T := (\kappa\times\id)(T)$ and pre-composition with $T'$.
    
If $\kappa$ is surjective on connected components (or if the tensor unit of $\AA$ is in $\II$) this bimodule is induced by a functor $\kappa_* : \SkCat(\Sigma) \to \SkCat(\Sigma')$.

We need to check that this construction preserves composition. 
Consider $\Sigma_1\overset{\kappa^1}{\inj}\Sigma_2\overset{\kappa^2}{\inj}\Sigma_3$. 
By definition, the composite bimodule on a pair of objects $X_1\in\SkCat(\Sigma_1)$ and $X_3\in\SkCat(\Sigma_3)$ is the coend 
\begin{equation}\label{eq:comp-of-embeddings}
    \int^{X_2\in\SkCat(\Sigma_2)} \Sk_\II(\Sigma_2;X_2, \kappa^1_*X_1) \otimes \Sk_\II(\Sigma_3;X_3, \kappa^2_*X_2)
\end{equation}
There's a canonical map from this composition to $\Sk_\II(\Sigma_3;X_3, (\kappa^2\kappa^1)_*X_1)$ which sends a simple tensor $T_2 \otimes T_3$ in \eqref{eq:comp-of-embeddings} to $\kappa^2_*T_2\circ T_3$. This is dinatural as $\kappa^2_*(T_2 \circ T)\circ T_3 = \kappa^2_*T_2\circ (\kappa^2_* T\circ T_3)$.
We must show that this defines an isomorphism.
%First suppose that $\kappa^1$ is surjective on connected components, so that $\kappa^1_*X_1$ is admissible and hence defines an object in $\SkCat(\Sigma_2)$. In this case we can apply some standard coend calculus: each simple tensor $T_2\otimes T_3$ in the coend is equivalent to a unique one with the first factor being identity on some element. In our case we get $T_2\otimes T_3 \sim \id_{\kappa^1_*X_1} \otimes (\kappa^2_*T_2\circ T_3)$ in the coend.
%Since composition respects the coend relations, this implies that $T_2\otimes T_3 \mapsto \kappa_*^2T_2 \circ T_3$ is an isomorphism.
%Next suppose that $\kappa^1$ is not surjective on connected components.The preceding argument does not apply because $\kappa^1_* X_1$ is not an admissible $\II$-labeling in $\Sigma_2$. We instead proceed by constructing an inverse to $T_2\otimes T_3 \mapsto \kappa_*^2T_2\circ T_3$
We need to show that any skein $S\in \Sk_\II(\Sigma_3;X_3, (\kappa^2\kappa^1)_*X_1)$ factors as $$S=\kappa^2_*T_2\circ T_3\text{ for some }T_2\otimes T_3 \in \Sk_\II(\Sigma_2;X_2, \kappa^1_*X_1) \otimes \Sk_\II(\Sigma_3;X_3, \kappa^2_*X_2)\ ,$$ where $X_2$ is an admissible $\II$-labeling in $\Sigma_2$. This turns out to be a special case of Proposition \ref{prop:skeins_glue}.

We can always factor $S$ as $S = \id_{(\kappa^2\kappa^1)_*X_1}\circ S = \kappa^2_*(\id_{\kappa^1_*X_1})\circ S$, but ${\kappa^1_*X_1}$ may not be admissible. We need to pull a strand of $S$ along a path $\gamma$ to cross $\kappa_2(\Sigma_2)\times\{\frac12\}$ as in Lemma \ref{lemma:push_skeins_wherever}. Let us check that this is well-defined up to coend relations. The coend relations allow us to identify the two ways of splitting a skein $\kappa^2_*T_2\circ \kappa^2_*T\circ T_3$, into $\kappa^2_*T_2\circ \kappa^2_*T\otimes T_3$ or $\kappa^2_*T_2\otimes \kappa^2_*T\circ T_3$, i.e. to change the height where we cut. We want to decompose $S$ into two skeins each considered up to isotopy, so we consider everything up to isotopies fixing $\kappa_2(\Sigma_2)\times\{\frac12\}$.

As in Lemma \ref{lemma:push_skeins_wherever}, consider a different path $\gamma'$ used to push a different strand of $S$ into $\kappa_2(\Sigma_2)\times\{\frac12\}$. Call the resulting ribbon graphs $S_\gamma$ and $S_{\gamma'}$. We can assume $\gamma$ and $\gamma'$ are disjoint and consider the ribbon graph $(S_\gamma)_{\gamma'}$ obtained by pushing both strands through $\kappa_2(\Sigma_2)\times\{\frac12\}$. Then $S_\gamma$ is related to $(S_\gamma)_{\gamma'}$ by a coend relation, either cutting above or below the end of $\gamma'$, but always below the end of $\gamma$. Similarly, $(S_\gamma)_{\gamma'}$ is related to $S_{\gamma'}$ by a coend relation. 

\paragraph{Isotopies to Transformations.}
    Let $H: \kappa^0 \Rightarrow \kappa^1$ be an isotopy, i.e. a continuous map $H : \Sigma\times [0,1] \to \Sigma'$ with each $H(-,t)$ an embedding, $H(-,0) = \kappa^0$, and $H(-,1) = \kappa^1$. Tracing out the isotopy we obtain an embedding     
    \begin{equation}
    \begin{aligned}
        \phi_H : \Sigma \times [0,1] &\to \Sigma' \times [0,1] \\
        (p,t) &\mapsto \left( H(p,t), t\right).
    \end{aligned}
    \end{equation} 
An object $X \in \SkCat(\Sigma)$ gives an $\II$-colored ribbon graph $\phi_H(X\times[0,1])$ from $\kappa^0_*X$ to $\kappa^1_*X$ in $\Sigma'\times[0,1]$ whose strands are colored and oriented by the colors and orientations of $X$. Isotopic isotopies give isotopic ribbon graphs. 

The natural transformation $\SkCat(H) : \SkCat(\kappa^0) \Rightarrow \SkCat(\kappa^1)$ has components $\SkCat(H)_{X,Y}: \Sk_\II(\Sigma';Y,\kappa^0_*X) \to \Sk_\II(\Sigma';Y,\kappa^1_*X)$ induced by composition with $\phi_H(X\times[0,1])$.
    
Next we show naturality of $\SkCat(H)$. Naturality in $Y$ is clear as we compose in the $X$ direction. For naturality in $X$, we need to show that for any morphism $T : X_1 \to X_2$ in $\SkCat(\Sigma)$ and any $Y \in \SkCat(\Sigma')$, the following diagram commutes:
 \begin{equation}\label{eq:two-functor-proof-naturality}
     \begin{tikzcd}
        \SkCat(\kappa^0)(X_1,Y) \ar[r,"\SkCat(\kappa^0)(T)"]\ar[d,"\SkCat(H)_{X_1,Y}"'] &[1cm] \SkCat(\kappa^0)(X_2,Y) \ar[d,"\SkCat(H)_{X_2,Y}"] \\
        \SkCat(\kappa^1)(X_1,Y) \ar[r,"\SkCat(\kappa^1)(T)"] &[1cm] \SkCat(\kappa^1)(X_2,Y)
     \end{tikzcd}
 \end{equation}
These maps are induced by composition with respectively $(\kappa^1_*T)\circ \phi_H(X_1\times[0,1])$ and $\phi_H(X_2\times[0,1]) \circ (\kappa^0_*T)$. We will show that these two ribbon graphs are isotopic.
 % We will prove this by exhibiting an isotopy between $(\kappa^1_*T)\circ \SkCat(H)_{X_1}$ and $\SkCat(H)_{X_2} \circ (\kappa^0_*T)$. 
%Here we've used $\kappa_*$ as shorthand for $\SkCat(\kappa)(-)$.
First, consider the isotopy $\Phi : \Sigma \times [0,1]\times [0,1] \to \Sigma' \times [0,1]$ given by
\begin{equation}\label{eq:two-functor-proof-isotopy}
 (p,t,s) \longmapsto   \begin{cases}
        (\kappa^1(p),t)&\text{ if } \frac{2}{3}s+\frac{1}{3} \leq t \\
        (H(p,3t-2s),t)&\text{ if } \frac{2}{3}s\leq t \leq \frac{2}{3}s + \frac{1}{3}\\
        (\kappa^0(p),t)&\text{ if } t\leq \frac{2}{3}s\\      
    \end{cases}
\end{equation}
At $s=0$, $\Phi$ applies the embedding $\phi_H$ in the bottom third of the thickened surface and $\kappa^1$ in the top two thirds.
At $s=1$ it applies $\kappa^0$ on the bottom two thirds and $\phi_H$ in the top third. 
At values in between it transitions from $\kappa^0$ to $\kappa^1$.

Next let $\overline{T}$ be any isotopy representative which is strictly vertical outside of $\Sigma\times [\frac{1}{3},\frac{2}{3}]$.
Then $\Phi(\cdot,0)|_{\overline{T}} = (\kappa^1_*T)\circ \phi_H(X_1\times[0,1])$ and $\Phi(\cdot,0)|_{\overline{T}} = \phi_H(X_2\times[0,1]) \circ (\kappa^0_*T)$ as morphisms in the skein category of $\Sigma'$. 
We conclude that \eqref{eq:two-functor-proof-naturality} commutes, and therefore that $\SkCat(H)$ is indeed a natural transformation.
\end{proof}

\subsection{Skein algebras}\label{sec:mod-skein-algebras}
The inclusion of the empty surface $\emptyset \inj \Sigma$ induces a morphism between $\II$-skein categories $\SkCat_\II(\emptyset) = \idty_{\Bimod} \to \SkCat_\II(\Sigma)$ in $\Bimod$, i.e. a presheaf-valued linear functor.
This gives a presheaf $$\Dist_\Sigma \in \widehat{\SkCat}_\II(\Sigma)$$ which we call the distinguished object.
See Remark \ref{rmk:fake-empty} for more details on $\Dist_\Sigma$.
%Explicitly, it is the presheaf $$\Sk_\II(\Sigma;-,\emptyset)$$ discussed in Remark \ref{rmk:fake-empty}.  
\begin{definition}\label{def:I-skein-algebra}
    The \textbf{modified skein algebra of the surface $\Sigma$ with coefficients in $\II$} is the endomorphism algebra of the distinguished presheaf in the presheaf category of the $\II$-skein category of $\Sigma$:
    $$\SkAlg_{\II}(\Sigma) := \End_{\widehat{\SkCat}_\II(\Sigma)}(\Dist_\Sigma)$$ 
    More explicitly, an element $\alpha$ of the $\II$-skein algebra of $\Sigma$ is a collection of linear maps $$\alpha_X : \Sk_\II(\Sigma;X,\emptyset) \to \Sk_\II(\Sigma;X,\emptyset)$$
    natural in the admissible $\II$-labeling $X$. 
    The product of two elements $\alpha$ and $\beta$ is given by composing their components $(\alpha\cdot\beta)_X = \alpha_X \circ \beta_X$.
\end{definition}
%Informally, an ``$\II$-skein" is a natural transformation of skeins which is defined as soon as there are $\II$-colored edges.
We will see in Remark \ref{rmk:Comparison-CGP-algebras} that this notion differs from the non-unital admissible skein algebras of \cite[Prop. 2.5]{CGPAdmissbleskein}.

\subsubsection{Old and new skein algebras}\label{sec:less-is-more}
We now take some time to compare $\SkAlg_\II(\Sigma)$ and $\SkAlg_\AA(\Sigma)$.
When $\II=\AA$, Definition \ref{def:I-skein-algebra} recovers the usual notion of a skein algebra, since in this case the distinguished presheaf is represented 
by any collection of disks all colored by the monoidal unit.
By the Yoneda lemma, the associated skein algebra is isomorphic to its endomorphism algebra in the skein category\footnote{Recall from Remark \ref{rmk:when-II-is-AA-skcat} that the usual notion of skein category is likewise recovered when $\II = \AA$.}, which is the usual skein algebra $\SkAlg_\AA(\Sigma)$ of the surface $\Sigma$. 

The skein algebra for a proper ideal $\II \subsetneq \AA$ is not a restriction of the usual skein algebra to $\II$-colored strands.
On the contrary, every $\AA$-colored ribbon graph defines an element in the $\II$-skein algebra.
Not only that, but the $\II$-skein algebra appears to contain both elements and relations beyond those in the $\AA$-skein algebra.
These ``new skeins" are only traditionally defined in the presence of $\II$-colored strands, but can exist on their own in the $\II$-skein algebra.
    
    \begin{definition}\label{def:AA-to-II-skalg-map}
  The canonical algebra homomorphism $$\SkAlg_\AA(\Sigma) \to \SkAlg_\II(\Sigma)$$ takes a closed $\AA$-colored skein $T\in \SkAlg_\AA(\Sigma)$ to the natural transformation that acts on $\Sk_\II(\Sigma;-,\emptyset)$ by stacking $T$ on top.
  The resulting ribbon graph is always skein-equivalent to an admissible $\II$-colored one precisely because $\II$ is a tensor ideal.
  
  More generally, each inclusion of ideals $\II \subset \mathcal{J}$ induces an algebra map $\SkAlg_{\mathcal{J}}(\Sigma) \to \SkAlg_\II(\Sigma)$. %whenever $\II\subseteq \mathcal J$ is a subideal.
\end{definition}

We will introduce the ``new" skeins, which the reader can convince themselves do not generally lie in the image of this map. 
The following definition follows \cite{DGGPR}, though the finiteness assumptions have been dropped.
\begin{definition}
Let $\II \subseteq \AA$ be an ideal in a ribbon category. A \textbf{bichrome graph} $T$ in a 3-manifold $M$ is a framed oriented embedded graph whose edges are either colored by an object of $\II$ or labeled ``red". We call the former ``blue" edges. Red edges are required to be loops whose endpoints are adjacent in the cyclic ordering on the half-edges ending at their shared vertex. Only blue edges can end at the boundary. See Figure \ref{fig:a-bichrome-graph}.

Vertices touching only blue edges are colored by morphisms in the usual way.
For the morphism coloring mixed vertices, we fuse each pair of adjacent red strands and color the fused pair by Lyubashenko's coend $\LL := \int^{X \in \II} X \otimes X^* \in \widehat \II$. We can then color the vertex in the usual way but by a morphism in $\widehat\II$, using the Yoneda embedding $\II \inj \widehat\II$ as necessary. 

The bichrome graph $T$ is called \textbf{admissible} if there is at least one blue edge per connected component of $M$.
\end{definition}
\begin{figure}[h]
    \centering
\begin{tikzpicture}
    \node[inner sep = 0pt, outer sep = 0pt] (dot) at (0,0) {$\bullet$};
    \draw[red] (dot).. controls (-1,0.5).. (-1,1)node[midway, sloped]{$\scriptstyle <$};
    \draw[red] (dot).. controls (-0.5,0.5)..(-0.5,1);
    \draw[red] (dot)--++(0,1)node[midway, sloped]{$\scriptstyle >$};
    \draw[red] (dot).. controls (0.5,0.5)..(0.5,1);
    \draw[blue] (dot).. controls (1,0.5)..(1,1) node[midway, sloped]{$\scriptstyle <$}node[midway, right]{$P$};
    \draw[red, dashed] (-1,1) arc (180:0:0.25 and 1);
    \draw[red, dashed] (0,1) arc (180:0:0.25 and 1);
    \draw[blue, dashed] (1,1) -- ++(0,1);
\end{tikzpicture}    
\caption{A mixed vertex in a bichrome graph. It is colored by a morphism $f: \idty \to \LL\otimes \LL \otimes P^*$ in $\widehat \II$, i.e. a natural transformation from $\Hom_\AA(-,\idty)$ to $\int^{X,Y\in\II}\Hom(-,X\otimes X^*\otimes Y \otimes Y^* \otimes P^*)$.}    
\label{fig:a-bichrome-graph}
\end{figure}

The admissibility condition allows us to turn bichrome graphs into $\II$-colored (blue) ones.
\begin{definition} The \textbf{red-to-blue operation} associates an $\II$-skein $\widetilde T$ to an admissible bichrome graph $T$ in $M$. We describe the procedure for a single mixed vertex.

To simplify our equations, we assume all red edges are adjacent.
First pull a blue strand colored by  $Q \in \II$ near the vertex. 
Locally, we observe the evaluation $P := Q \otimes Q^* \to \idty$ and the original vertex $\idty \to \LL^{\otimes k} \otimes X$ where $X = \idty$ or $X \in \II$.
Composing we get a morphism $f:P \to \LL^{\otimes k} \otimes X$ in $\widehat\II$. We have a natural isomorphism
\begin{equation}\label{eq:red-to-blue-lift}
    \Hom_{\widehat\II}(P, \LL^{\otimes k}\otimes X) \simeq \int^{X_1,\dots,X_k \in \II} \Hom_\II(P,X_1\otimes X_1^*\cdots X_k\otimes X_k^* \otimes X)
\end{equation}
coming from the fact that $P$ is compact-projective in $\widehat\II$ (said differently, that colimits in the presheaf category $\widehat\II$ are computed point-wise).
Choose a representative $\sum_j \lambda_j f_j$ of $f$ in the coend\footnote{The right hand side of \eqref{eq:red-to-blue-lift} is a colimit in vector spaces, so is a quotient of the direct sum of its components, which are Hom spaces.}, with each $f_j : P \to X_{j,1}\otimes X_{j,1}^*\cdots X_{j,k}\otimes X_{j,k}^* \otimes X$ a morphism in $\II$.

Finally, define $\widetilde T := \sum_j \lambda_j \widetilde T_j$ where $\widetilde T_j$ is the $\II$-colored ribbon graph where the $i$-th red strand is colored by $X_{j,i}$ and the mixed vertex is colored by $f_j$. See Figure \ref{fig:red-to-blue}.
\end{definition}
\begin{figure}[h]
    \centering
\begin{tikzpicture}[baseline = 0pt]
    \node[inner sep = 0pt, outer sep = 0pt] (dot) at (0,0) {$\bullet$};
    \draw[red] (dot).. controls (-1,0.5).. (-1,1)node[midway, sloped]{$\scriptstyle <$};
    \draw[red] (dot).. controls (-0.5,0.5)..(-0.5,1);
    \draw[red] (dot)--++(0,1)node[midway, sloped]{$\scriptstyle >$};
    \draw[red] (dot).. controls (0.5,0.5)..(0.5,1);
    \draw[blue] (1.5,1).. controls (1.5, 0.5) and (1,0.5)..(1,1) node[midway, sloped]{$\scriptstyle <$}node[midway, right]{$Q$};
    \draw[red, dashed] (-1,1) arc (180:0:0.25 and 1);
    \draw[red, dashed] (0,1) arc (180:0:0.25 and 1);
    \draw[blue, dashed] (1,1) -- ++(0,1);
    \draw[blue, dashed] (1.5,1) -- ++(0,1);
\end{tikzpicture}    
=
\begin{tikzpicture}[baseline = 0pt]
    \node[inner sep = 0pt, outer sep = 0pt] (dot) at (0,0) {$\bullet$};
    \draw[red] (dot).. controls (-1,0.5).. (-1,1)node[midway, sloped]{$\scriptstyle <$};
    \draw[red] (dot).. controls (-0.5,0.5)..(-0.5,1);
    \draw[red] (dot)--++(0,1)node[midway, sloped]{$\scriptstyle >$};
    \draw[red] (dot).. controls (0.5,0.5)..(0.5,1);
    \draw[red, dashed] (-1,1) arc (180:0:0.25 and 1);
    \draw[red, dashed] (0,1) arc (180:0:0.25 and 1);
    \draw[blue, dashed] (1,1) -- ++(0,1);
    \draw[blue, dashed] (1.5,1) -- ++(0,1);
    \node[inner sep = 0pt, outer sep = 0pt] (dot) at (0,-0.3) {$\bullet$};
    \draw[blue] (1.5,1).. controls (1.5, -1.3) and (-0.3,-1)..(0,-0.3) node[midway, sloped]{$\scriptstyle <$}node[midway, below right = -3pt]{$Q$};
    \draw[blue] (1,1).. controls (1,-0.5) and (0.3,-1)..(0,-0.3) node[midway, sloped]{$\scriptstyle >$};
    \draw[dashed] (-0.4,-0.5) rectangle (0.4,0.2);
    \node at (-0.6,-0.15) {$f$};
\end{tikzpicture}    
$\quad \quad \leadsto\quad \quad$
\begin{tikzpicture}[baseline = 0pt]
    \node[inner sep = 0pt, outer sep = 0pt] (dot) at (0,0) {$\bullet$};
    \draw[blue] (dot).. controls (-1,0.5).. (-1,1)node[midway, sloped]{$\scriptstyle <$} node[midway, left]{$X$};
    \draw[blue] (dot).. controls (-0.5,0.5)..(-0.5,1);
    \draw[blue] (dot)--++(0,1)node[midway, sloped]{$\scriptstyle >$} node[midway, left=-2pt]{$Y$};
    \draw[blue] (dot).. controls (0.5,0.5)..(0.5,1);
    \draw[blue, dashed] (-1,1) arc (180:0:0.25 and 1);
    \draw[blue, dashed] (0,1) arc (180:0:0.25 and 1);
    \draw[blue, dashed] (1,1) -- ++(0,1);
    \draw[blue, dashed] (1.5,1) -- ++(0,1);
    \draw[blue] (1.5,1).. controls (1.5, -1.3) and (-0.3,-1)..(0,0) node[midway, sloped]{$\scriptstyle <$}node[midway, below right=-3pt]{$Q$};
    \draw[blue] (1,1).. controls (1,-0.5) and (0.3,-1)..(0,0) node[midway, sloped]{$\scriptstyle >$};
    \node at (-0.25,-0.25) {$\tilde f$};
\end{tikzpicture}    
\caption{The red-to-blue operation. Here $\tilde f : P \to X \otimes X^*\otimes Y \otimes Y^*$ is a representative of $f: P \to \LL \otimes \LL$, and $P:=Q\otimes Q^*$.}    
\label{fig:red-to-blue}
\end{figure}

A number of choices must be made in the red-to-blue operation, but the result is nevertheless well defined. 
\begin{lemma}
Two skeins obtained via two different red-to-blue operations on the same admissible bichrome graph $T$ are related by isotopies and admissible skein relations.
\end{lemma}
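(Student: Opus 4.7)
My plan is to localize the red-to-blue operation at a single mixed vertex and separately address the two sources of ambiguity: the choice of which blue strand is pulled near the vertex (and along which path), and the choice of representative $\sum_j \lambda_j f_j$ of $f$ in the coend isomorphism \eqref{eq:red-to-blue-lift}. Because the operation is performed independently at each mixed vertex and is supported inside a small ball, I may replace the bichrome graph with its intersection with a small ball $B$ around a chosen mixed vertex; admissibility of the original bichrome graph guarantees there is a blue strand in the corresponding connected component of $M$, which may be pulled into $B$ by the argument of Lemma \ref{lemma:push_skeins_wherever}.

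I would first establish independence of the coend representative. Two representatives of the same class in $\int^{X_1,\dots,X_k\in \II}\Hom_\II(P, X_1\otimes X_1^*\otimes\dots\otimes X_k\otimes X_k^*\otimes X)$ differ by a finite chain of coend relations: for every $\alpha_i : X_i \to X_i'$ in $\II$, precomposing a morphism with $\alpha_i$ on the $X_i$ slot equals the corresponding morphism involving $\alpha_i^*$ on the $X_i^*$ slot, up to the canonical identification. Translated into the picture inside $B$, this is precisely the admissible skein relation which slides $\alpha_i$ across the cup closing off the red-turned-blue loop carrying $X_i$. The pulled blue strand $Q$ provides the required $\II$-colored edge in $B$, so the relation is genuinely admissible and therefore holds in $\Sk_\II(M;X)$ by Definition \ref{def:rel-skein-mod}.

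Next I would establish independence of the choice of pulled blue strand $Q$ and of the path along which it is pulled. Given two candidates $Q_1, Q_2$ dragged into $B$, I would first perform both pullings simultaneously, producing a skein $\widetilde T_{1,2}$ in which the vertex is decorated by a representative of a lift $P_1\otimes P_2 \to \LL^{\otimes k}\otimes X$ in the relevant coend, where $P_i := Q_i\otimes Q_i^*$. Retracting $Q_2$ through an admissible isotopy (during which $Q_1$ keeps $B$ admissible) returns $\widetilde T_1$, and symmetrically for $\widetilde T_2$. Path independence for a fixed choice of $Q$ then follows from the chain-of-intermediate-pulls argument already used in the proof of Lemma \ref{lemma:push_skeins_wherever}.

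The main obstacle I anticipate is bookkeeping: verifying that the coend relations on the right-hand side of \eqref{eq:red-to-blue-lift}, which a priori live in $\widehat\II$, genuinely correspond under the Reshetikhin-Turaev evaluation to admissible skein relations confined to a single small cube in $M$. Concretely, one must set up the local model near a mixed vertex carefully enough that, once the blue strand $Q$ has been pulled in, every generating coend relation can be realized as an equality of morphisms in $\Hom_\II$ between two admissible $\II$-colored tangles that agree outside a smaller sub-cube, so that Definition \ref{def:rel-skein-mod} applies literally. With this identification in place, everything else reduces to the Lemma \ref{lemma:push_skeins_wherever} style arguments above.
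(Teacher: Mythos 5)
Your proposal is correct and follows essentially the same route as the paper: independence of the coend representative is handled by realizing the dinaturality relations as sliding a morphism along the closed (red-turned-blue) loop into the vertex, and independence of the chosen blue strand and path is handled by the double-pull trick, pulling both candidates near the vertex and comparing representatives of the common composite $f_{12}=f_1\otimes ev_{Q_2}=ev_{Q_1}\otimes f_2$. Your extra localization to a ball and the admissibility bookkeeping are harmless refinements of the same argument, not a different method.
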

\begin{proof}
There are two choices in the red-to-blue operation.
First, there is the choice of a blue edge and the path used to to pull it near the mixed vertex.
Second there is the choice of a representative for $f$ in the right hand side of \eqref{eq:red-to-blue-lift}.
This is unique up to the coend relations, which leave the skein invariant by isotoping vertices along the red edges (recall that we assume that pairs of adjacent red half-edges are connected.)

For path independence we do the usual trick of bringing both chosen blue edges near the vertex.
Suppose one is labeled $Q_1$ and the other $Q_2$.
Denote $f_1$ (resp. $f_2$) the morphisms in \eqref{eq:red-to-blue-lift} obtained by pulling the first (resp. second) edge along its chosen path.
After pulling both edges, locally we observe a tensor product of evaluations and the original vertex. Their composition $f_{12}$ is equal to both $f_1 \otimes ev_{Q_2}$ and $ev_{Q_1} \otimes f_2$. A choice of representative for either $f_1$ or $f_2$ provides a representative for $f_{12}$. The two red-to-blue modifications differ only by the choice of a representative for $f_{12}$, and hence do not affect $\widetilde T$.
\end{proof}
Using the operation $T \mapsto \widetilde T$ we can now define skeins from bichrome graphs.
\begin{definition}
    The \textbf{modified skein $\alpha^T\in\SkAlg_\II(\Sigma)$ associated with a bichrome graph} $T$ is the natural transformation whose components $$\alpha^T_X: \Sk_\II(\Sigma;X,\emptyset) \to \Sk_\II(\Sigma;X,\emptyset)$$
    are given by stacking $T$, then turning the resulting admissible bichrome graph blue using the red-to-blue operation. This is natural because the bichrome graph obtained after stacking is natural and the red-to-blue operation is well-defined.
\end{definition}
In the definition we've used that non-admissible bichrome graphs will become admissible after stacking on an element of $\Sk_\II(\Sigma;X,\emptyset)$.
Note that if $T$ is an admissible bichrome graph, it can be turned blue before stacking and $\alpha^T$ is the natural transformation coming from stacking the $\II$-colored ribbon graph $\widetilde T$.
If $T$ is not admissible (i.e. has no blue edges) then $\alpha^T$ will not in general be the image of an $\AA$-colored skein.

The take-away here is that there are more ways of acting (naturally) on $\II$-skeins than there are of acting on all skeins, and therefore the $\II$-skein algebra is bigger than the $\AA$-skein algebra. 

Using factorization homology (see Corollary \ref{cor:ModSkAlgIsInvCoend}) one can prove that the $\alpha^T$ generate the $\II$-skein algebra for surfaces with non-empty boundary.

\begin{remark}
    The red-to-blue operation builds on Lyubashenko's way of interpreting surgery on a 3-manifold, replacing the Kirby color used in the semisimple case.
    It already appeared in \cite[Lemma 4.5]{DGGPR} in the case when $\II = \operatorname{Proj}(\VV) \subseteq \VV$ is the ideal of projectives in a finite unimodular ribbon tensor category $\VV$.
    %In this case the free co-completion of $\II$ is the ind-completion of the original category, i.e. $\widehat \II \simeq \operatorname{Ind}(\VV)$ and the coend $\LL$ is Lyubashenko's coend.
    Note that we write $\LL$ as a coend over $\II$ instead of $\AA$, so that we get $\II$-colored ribbon graphs from the summands.
    The two coends are canonically isomorphic \cite[Prop. 5.1.7]{KerlerLyubBook} when $\II$ is the ideal of projective objects in a finite tensor category.
\end{remark}

\begin{remark}\label{rmk:Comparison-CGP-algebras}
Admissible skein algebras whose elements are admissible closed ribbon graphs in the thickened surface were introduced in \cite[Proposition 2.5]{CGPAdmissbleskein}. They do not allow the empty skein and are therefore non-unital.
There is a canonical algebra map from this admissible skein algebra to the usual $\AA$-skein algebra given by inclusion.
Composing with the canonical morphism $\SkAlg_\AA(\Sigma) \to \SkAlg_\II(\Sigma)$ of Definition \ref{def:AA-to-II-skalg-map}, we see that the $\II$-skein algebra contains but is not generated by elements coming from the admissible skein algebra.
Consider for example the empty skein.
We will see that all three algebras act on admissible skein modules.
These algebra maps preserve the action and therefore the actions of the admissible and $\AA$-skein algebras can be recovered by that of the $\II$-skein algebra.
\end{remark}

%We only gave a quick introduction here, and there are many unanswered questions regarding $\II$-skein algebras. Is the map $\SkAlg_\AA(\Sigma)\to \SkAlg_\II(\Sigma)$ injective? Do the bichrome skeins (allowing ``red" circles) generate $\SkAlg_\II(\Sigma)$? Can we describe diagrammatically the relations between these bichrome graphs? We are grateful to Matthieu Faitg for clarifying ideas on this subject. We hope to give a more extensive study in future work.

\subsection{Skein modules of 3-cobordisms}\label{sec:mod-skein-modules}
As usual, a 3-dimensional cobordism $M$ between two surfaces induces a bimodule between their $\II$-skein categories.
To see how this works, we take a moment to consider how orientation affects the skein category.
Let $\overline\Sigma$ be $\Sigma$ with opposite orientation.
There is a categorical equivalence $\SkCat_\II(\Sigma) \simeq \SkCat_\II(\overline\Sigma)^{op}$ induced by the orientation reversing diffeomorphism \begin{equation*}
        \begin{aligned}
           \operatorname{rev}: \Sigma \times [0,1] &\to \Sigma \times [0,1]\\
            (x,t)&\mapsto (x,1-t)\quad .
        \end{aligned}
    \end{equation*}
An object $(\iota,W)$ is mapped to itself (though now $\iota$ is orientation reversing) and a ribbon graph $T\subseteq \Sigma\times[0,1]$ is mapped to $\operatorname{rev}(T)\subseteq \overline\Sigma\times[0,1]$ which now goes from the target of $T$ back to its source.
\begin{definition}\label{def:adm-skein-bimodule-functor}
    Let $M: \Sigma' \to \Sigma$ be a cobordism. That is, $M$ is a compact oriented 3-manifold with a diffeomorphism $\partial M \simeq \Sigma \sqcup \overline\Sigma'$ where $\partial M$ is oriented with outward normal. 
The \textbf{admissible skein bimodule} of $M$ is the functor \begin{equation}
    \begin{aligned}
\SkFun_\II(M): \SkCat_\II(\Sigma)\otimes\SkCat_\II(\Sigma')^{op} &\to \Vect\\
(X,Y)&\mapsto \Sk_\II(M;X,Y)
    \end{aligned}
        \end{equation}
The action of morphisms in $\SkCat_\II(\Sigma)$ and $\SkCat_\II(\Sigma')$ is induced by stacking in a neighborhood of $\partial M$ diffeomorphic to $(\Sigma \sqcup \overline\Sigma')\times[0,1]$.
\end{definition}
Note that $M$ above is a cobordism from $\Sigma'$ to $\Sigma$, whereas its skein bimodule goes in the other direction.
This contravariance also appears in \cite{WalkerNotes}, where skeins are treated as the dual theory to a TQFT.
We emphasize that this is only a nuisance and not a deep issue, since $\operatorname{Cob} \simeq \operatorname{Cob}^{op}$ via orientation reversal. 

The results of \cite[Section 4.4]{WalkerNotes} generalize to this context.
\begin{definition}
Let $M$ be a compact oriented 3-manifold, $\Sigma$ a compact oriented surface, possibly with boundary, and $\Sigma\sqcup-\Sigma\overset{\kappa\sqcup\iota}\inj \partial M$ an embedding. Denote $M_{gl} = M/{\scriptstyle \kappa(x)\sim\iota(x)}$ the compact oriented 3-manifold obtained by identifying $\Sigma$ to $-\Sigma$, and call $\pi:M\to M_{gl}$ the quotient map. 

Let $X$ be an $\II$-labeling in $\partial M$ disjoint from $-\Sigma\sqcup\Sigma$ which we will also see as an $\II$-labeling of $M_{gl}$, and let $Y$ be an $\II$-labeling of $\Sigma$. We denote the $\II$-labeling of $M$ obtained as the disjoint union of $X$, $\kappa(Y)$ and $\iota(Y)$ by $X\sqcup Y\sqcup -Y$.

The \textbf{gluing along $\Sigma$} of a skein $T\in \Sk_\II(M;X\sqcup Y\sqcup-Y)$ is the skein 
$$\mathrm{gl}_\Sigma(T) = \pi(T) \in \Sk_\II(M_{gl};X)$$
where $\pi(T)$ has colored inherited from $T$, and the new interior points corresponding to the endpoints $Y$ are colored by identity morphisms. Clearly, isotopic and skein equivalent skeins give isotopic and skein equivalent gluings.
\end{definition}
\begin{proposition}\label{prop:skeins_glue}
    Let $M$, $\Sigma$, $M_{gl}$ and $X$ be as above. Then gluing along $\Sigma$ induces an isomorphism of vector spaces 
    \begin{equation}\label{eq:skeins_glue}
        \mathrm{gl}_\Sigma:\int^{Y \in \SkCat_\II(\Sigma)} \Sk_\II(M;X\sqcup Y\sqcup -Y) \tilde\to \Sk_\II(M_{gl},X)\ .
    \end{equation}
\end{proposition}
\noindent 
We're again relying on the co-completeness of $\Vect$ when assuming the existence of the coend in \eqref{eq:skeins_glue}.
\begin{proof}
    This is \cite[Thm 4.4.2]{WalkerNotes} adapted to the non-semisimple setting. We suppose $\Sigma$ is connected for simplicity, one can apply the connected case iteratively on connected components to obtain the general result.

    First, note that by Lemma \ref{lemma:push_skeins_wherever} on $N = \Sigma \subseteq M_{gl}$ we may replace $\Sk_\II(M_{gl},X)$ with the space of skeins that intersect $\Sigma$ at least once and their isotopies. 

    The rest of the proof follows \cite[Thm 4.4.2]{WalkerNotes} directly, which we recall for the reader's convenience. The map $\mathrm{gl}_\Sigma$ is well-defined on the coend by considering the isotopy $\varphi$ that shifts in the $I$-direction in a collar $\Sigma\times I \inj M_{gl}$ of $\Sigma$ in $M_{gl}$. Up to an isotopy, every skein in $M_{gl}$ can be made transverse to $\Sigma$ while intersecting it, hence the map $\mathrm{gl}$ is surjective. Let us turn to injectivity.
    
    Fix two ribbon graphs $\oT$ and $\oT'$ in $M$ with boundary $X\sqcup Y\sqcup -Y$ and $X\sqcup Y'\sqcup -Y'$ representing skein $T$ and $T'$. Denote $\mathrm{gl}_\Sigma(\oT)$ and $\mathrm{gl}_\Sigma(\oT')$ the ribbon graphs (not up to isotopy) in $M_{gl}$ obtained by gluing them along $\Sigma$. Suppose that $\mathrm{gl}_\Sigma(T)=\mathrm{gl}_\Sigma(T') \in \Sk_\II(M_{gl};X)$ are equal, i.e. $\mathrm{gl}_\Sigma(\oT)$ and $\mathrm{gl}_\Sigma(\oT')$ can be related by isotopies and skein relations. Any isotopy can be decomposed into isotopies supported in small enough balls. Isotopies supported in balls disjoint from $\Sigma$ can be lifted in $M$ and we obtain an isotopy from $\oT$ to $\oT'$. If $\mathrm{gl}_\Sigma(\oT)$ and $\mathrm{gl}_\Sigma(\oT')$ are related by the isotopy $\varphi$ above, then $T$ and $T'$ are related by a coend relation. Up to conjugation with $\varphi$, an isotopy supported in a small enough ball intersecting $\Sigma$ can be made supported on a ball disjoint from $\Sigma$. Finally, any skein relation can be made supported in a ball disjoint from $\Sigma$ by an isotopy, in which case it comes from a skein relation in $M$. Hence, two ribbon graphs $\oT$ and $\oT'$ can be related by isotopies, skein relations and coend relations, and the map $\mathrm{gl}$ is injective.
\end{proof}

\begin{theorem}\label{thm:its-a-TQFT}
    Skein bimodules extend skein categories to a contravariant symmetric monoidal functor $$\SkFun_\II: \operatorname{Cob}_{2+1}\to \Bimod$$    
    i.e. skein categories and skein bimodules form a categorified 3-TQFT.
\end{theorem}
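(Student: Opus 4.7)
The plan is to mimic the strategy of Theorem \ref{theorem:skcats-a-2-functor}: first verify that the assignment $M \mapsto \SkFun_\II(M)$ is well-defined on 1-morphisms (which is essentially Definition-by-construction), then check that it sends gluing of cobordisms to composition of bimodule functors, identity cylinders to identity bimodule functors, and diffeomorphisms to 2-isomorphisms, and finally that disjoint union goes to the tensor product in $\Bimod$. Contravariance is already built into the definition, since the orientation convention on $\partial M$ forces a source/target swap.

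The main technical step is the gluing formula. Given composable cobordisms $M : \Sigma' \to \Sigma$ and $M' : \Sigma'' \to \Sigma'$ with glued cobordism $M \cup_{\Sigma'} M' : \Sigma'' \to \Sigma$, I must exhibit a natural isomorphism
\begin{equation*}
\int^{Y \in \SkCat_\II(\Sigma')} \Sk_\II(M; X, Y) \otimes \Sk_\II(M'; Y, Z) \xrightarrow{\ \sim\ } \Sk_\II(M \cup_{\Sigma'} M'; X, Z)
\end{equation*}
sending a simple tensor $T \otimes T'$ to its concatenation across $\Sigma'$. Surjectivity is where the real work is: given any admissible $\II$-skein $S$ in $M \cup_{\Sigma'} M'$, I must factor it as a stack of two admissible pieces on a common admissible $\II$-labeling $Y$ of $\Sigma'$. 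To do this, I apply Lemma \ref{lemma:push_skeins_wherever} to the codimension-1 submanifold $\Sigma' \hookrightarrow M \cup M'$, component by component, to isotope $S$ to a representative which meets $\Sigma' \times \{\tfrac{1}{2}\}$ transversely in at least one $\II$-colored point of each connected component of $\Sigma'$ (taking $Y$ to be the induced $\II$-labeling). In the degenerate case where a component of $\Sigma'$ sits between components of $M$ and $M'$ that contain no strands, both sides of the formula vanish on the relevant labelings by admissibility, so the isomorphism trivially holds there. Injectivity and well-definedness modulo coend relations follow by the same pull/slide argument as in the proof of Lemma \ref{lemma:push_skeins_wherever} and the composition step of Theorem \ref{theorem:skcats-a-2-functor}: two factorizations of $S$ can be compared by pulling the strand through an intermediate copy of $\Sigma'$.

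The remaining checks are routine. For identities, the cylinder $\Sigma \times [0,1]$ viewed as a cobordism $\Sigma \to \Sigma$ has $\Sk_\II(\Sigma \times [0,1]; X, Y) = \Hom_{\SkCat_\II(\Sigma)}(Y, X)$ by Definition \ref{def:modified-skein-cat}, i.e.\ the identity bimodule in $\Bimod$. For 2-morphisms, a diffeomorphism of cobordisms rel boundary induces a natural isomorphism between the associated bimodule functors by direct push-forward of ribbon graphs, with the isotopy argument of Theorem \ref{theorem:skcats-a-2-functor} ensuring independence of the isotopy class. Symmetric monoidality is immediate from the fact that an admissible ribbon graph in $M \sqcup M'$ splits canonically as a disjoint union of admissible ribbon graphs in $M$ and $M'$, giving $\SkFun_\II(M \sqcup M') \cong \SkFun_\II(M) \otimes \SkFun_\II(M')$. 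The principal obstacle throughout is the one highlighted in Remark \ref{rmk:fake-empty}: admissibility forces presheaf-valued functors, and one has to carefully verify at every step that the correct intermediate $\II$-labelings can be produced on the gluing surface—this is exactly what Lemma \ref{lemma:push_skeins_wherever} is designed to accomplish.
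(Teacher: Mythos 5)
Your argument is correct and takes essentially the same route as the paper: the substance is the gluing isomorphism \eqref{eq:skfun-composes}, established by using Lemma \ref{lemma:push_skeins_wherever} on the gluing surface (component by component) to force admissible intersections with $\Sigma'$, with symmetric monoidality coming from the splitting of admissible skeins over disjoint unions --- the paper merely delegates the remaining coend bookkeeping to \cite[Theorem 4.4.2]{WalkerNotes}, which you instead sketch by hand along the lines of the composition step of Theorem \ref{theorem:skcats-a-2-functor}. One small remark: your ``degenerate case'' cannot occur, since admissibility of a skein in $M\cup_{\Sigma'}M'$ forces a strand in every connected component of the glued manifold, so the pulling argument already reaches every component of $\Sigma'$; in particular the claim that ``both sides vanish'' there is neither needed nor accurate.
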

\begin{proof}
    Skein bimodules compose by Proposition \ref{prop:skeins_glue}, i.e. for any composable pair of cobordisms $\Sigma_1 \overset{M_{12}} \to \Sigma_2 \overset{M_{23}}\to \Sigma_3$ and objects $X_1, X_3$ of $\SkCat(\Sigma_1), \SkCat(\Sigma_3)$, the gluing map is an isomorphism:
    \begin{equation}\label{eq:skfun-composes}
            \int^{X_2 \in \SkCat(\Sigma_2)} \Sk_\II(M_{12};X_1,X_2) \otimes \Sk_\II(M_{23};X_2,X_3) \to \Sk_\II(M_{12}\underset{\Sigma_2}{\cup}M_{23};X_1,X_3).
    \end{equation}
Symmetric monoidality was already observed in Theorem \ref{theorem:skcats-a-2-functor} on objects and follows from $$\Sk_\II(M\sqcup M', X\sqcup X') \simeq \Sk_\II(M,X)\otimes \Sk_\II(M',X')$$ on morphisms.
\end{proof}

When $\II=\AA$, the usual skein module with empty boundary object is obtained by evaluating the skein bimodule on the empty object.
When $\II \subsetneq\AA$ is a proper ideal, we again generalize the construction by letting the distinguished presheaf play the role of the empty object.
\begin{definition}
    Let $M: \Sigma' \to \Sigma$ be a cobordism as above.
    The \textbf{modified skein module $\SkMod_\II(M)$ of $M$ with coefficients in $\II$} is the $\SkAlg_\II(\Sigma)$-$\SkAlg_\II(\Sigma')$-bimodule $$\SkMod_\II(M) := \Hom_{\widehat{\SkCat}_\II(\Sigma')}(\Dist_{\Sigma'},\widehat{\SkFun}(M)(\Dist_\Sigma))$$ where $\widehat{\SkFun}(M)$ denotes the essentially unique cocontinuous extension of $\SkFun(M):{\SkCat}_\II(\Sigma) \to \widehat{\SkCat}_\II(\Sigma')$ to $\widehat{\SkFun}(M): \widehat{\SkCat}_\II(\Sigma) \to \widehat{\SkCat}_\II(\Sigma')$, see \cite{BCJReflDualPr} or \cite[Prop. 2.2.4]{DuggerSheavesHomotopy}.
\end{definition}
Let's unpack this definition.
 If $M = \Sigma\times [0,1]$ is a cylinder, then $\widehat{\SkFun}(M)$ is the identity and $\SkMod_\II(M)$ coincides with the $\II$-skein algebra introduced before.

In general the presheaf $\widehat{\SkFun}(M)(\Dist_{\Sigma}) : \SkCat_{\II}(\Sigma')^{op} \to \Vect$ is given by the coend
\begin{equation}\label{eq:coYoneda-skmod}
    Y \mapsto \int^{X \in \SkCat_\II(\Sigma)} \hspace{-1em} \Sk_\II(M;X,Y)\otimes \Sk_\II(\Sigma\times[0,1]; X, \emptyset) \simeq \Sk_\II(M;\emptyset,Y)
\end{equation}
where the last equivalence uses the fact that \eqref{eq:skeins_glue} is an isomorphism.

An element $\alpha$ of $\SkMod_\II(M)$ is therefore a natural transformation $\alpha : \Dist_{\Sigma'} \Rightarrow \Sk_\II(M;-,\emptyset)$ determined by its coefficients
\begin{equation}
\alpha_Y : \Dist_{\Sigma'}(Y) := \Sk_\II(\Sigma'\times[0,1]; Y , \emptyset) \to \Sk_\II(M; \emptyset, Y) 
\end{equation}
which are natural in $Y \in \SkCat_\II(\Sigma')$.

The action of $\SkAlg_\II(\Sigma') := \End_{\widehat{\SkCat}_\II(\Sigma')}(\Dist_{\Sigma'})$ is given by composition in $\widehat{\SkCat}_\II(\Sigma')$.
The action of an element $\beta$ in $\SkAlg_\II(\Sigma)$ is given by post-composition with $\widehat{\SkFun}_\II(M)(\beta)$.

The empty skein in the $\II$-skein module of $M$ makes sense if and only if the outgoing boundary $\Sigma' \inj M$ is surjective on connected component. 
This is closely related to the fact that the undecorated \cite{DGGPR} theories are defined only for these cobordisms. 
Algebraically, it is an instance of the fact that the inclusion of the unit in a non-semisimple ribbon category is almost, but not completely, 3-dualizable \cite{HaiounUnit}. 

\begin{remark}\label{rmk:mod-SkAlg-acts-on-adm-SkMod} Skein modules as we've defined them treat incoming and outgoing boundary components very differently.
We consider a compact 3-manifold $M$ as a cobordism $\Sigma' \to \Sigma$ in two canonical ways. 

Taking $\Sigma' = \emptyset,\ \Sigma = \partial M$, $\SkMod_\II(M)$ is the admissible skein module, where we do not allow the empty skein. Hence the $\II$-skein algebra of $\partial M$ also acts on the admissible skein module of $M$ introduced in \cite{CGPAdmissbleskein}.
When $\Sigma' = \emptyset$, then $\widehat{\SkCat}_\II(\Sigma') = \Vect$ and $\Dist_{\Sigma'} = \Bbbk$. 
By \eqref{eq:coYoneda-skmod} we have $$\SkMod_\II(M)\simeq \int^{X \in \SkCat_\II(\Sigma)}\Sk_\II(\Sigma;X, \emptyset)\otimes \Sk_\II(M;X) \simeq \Sk_\II(M;\emptyset).$$

On the other hand, we can take $\Sigma' = \overline{\partial M}$ and $\Sigma = \emptyset$.
If each connected component of $M$ has non-empty boundary, then any $\AA$-colored skein in $M$ gives an element of the $\II$-skein module and there are new elements coming from the bichrome graphs described above.
\end{remark}

\section{Relation to factorization homology}
\newcommand{\Idiskalg}{I}
\newcommand{\diskalg}{E}

The main result of this section is Theorem \ref{thm:main-result}, which establishes an identification of $\SkCat_\II(\Sigma)$ with the factorization homology over $\Sigma$ of the $\Bimod$ disk algebra $\Idiskalg$ associated to $\II$.

\subsection{Factorization homology}\label{sec:factorization-homology}

Factorization homology is defined in \cite[Def 3.7]{Ayala_Francis_2019} for any $\Disk_n$-algebra with values in a symmetric monoidal $(\infty,1)$-category.
We will be interested in $\Disk_2$-algebras in $\Bimod$. 
As $\Bimod$ is a $(2,1)$-category, it is enough to consider the homotopy $(2,1)$-categories of disks and surfaces.
\begin{definition}
    The (2,1)-category $\Disk$ of disks is the monoidal full subcategory of $\Surf$ with objects given by finite disjoint unions of the standard disk. 
A \textbf{$\Disk$-algebra} is a symmetric monoidal 2-functor out of $\Disk$.
\end{definition}

\begin{definition}\label{def:factorization-homology}
    The \textbf{factorization homology} of a $\Disk$-algebra $\diskalg : \Disk \to \Bimod$ over a surface $\Sigma$ is the homotopy 2-colimit
    \begin{equation}
        \int_{\Sigma}\! \diskalg := 2\text{--}\colim \tilde{\diskalg} .
    \end{equation}
    Where $\tilde{\diskalg} : \Disk_{/\Sigma} \to \Bimod$ is the composite functor $\Disk_{/\Sigma} \overset{\text{for}}{\to} \Disk \overset{\diskalg}{\to} \Bimod$ and $\Disk_{/\Sigma}$ is the slice (2,1)-category.
    
By \cite[Lemma 4.3.2.13]{LurieHTT}, \cite[Proposition 3.9]{Ayala_Francis_2019} and using the universal property of colimits, this assignment extends uniquely to a symmetric monoidal 2-functor $\displaystyle \int_-\!\diskalg:\Surf \to \Bimod$.
%Factorization homology is the unique symmetric monoidal 2-functor $F:\Surf \to \Bimod$ such that the canonical cocones $c_F$ described in the remark above exhibit each $F(\Sigma)$ as the homotopy colimit of $F\vert_{\Disk} \circ \operatorname{for}$.
\end{definition}
Let's unpack this definition, taking advantage of the fact that we're dealing with (2,1)-categories. 
The slice (2,1)-category $\Disk_{/\Sigma}$ has:
    \begin{description}
        \item Objects: disks over $\Sigma$, i.e. embeddings $\iota : d \hookrightarrow \Sigma$ where $d \in \Disk.$
        \item 1-Morphisms: embeddings of disks over $\Sigma$, i.e. 
        \begin{equation*}
        \begin{tikzcd}
            d \ar[rr,"\rho"]\ar[dr, "\iota"', ""{name=iota, above right}] &\ & d' \ar[dl, "\iota'"] \\
            &\Sigma
                    \ar[from=iota,to=1-3,Rightarrow,"\varphi"' near start, shorten > = 15]
        \end{tikzcd}.    
        \end{equation*}
        \item 2-Morphisms from $(\rho,\varphi)$ to $(\rho',\varphi')$: isotopies over $\Sigma$, i.e. 
        \begin{equation*}
            \begin{tikzcd}
            d \ar[rr, bend left,"\rho", ""{name=rho, below}]\ar[rr, bend right,"\rho'"', ""{name=rhoprime, above}] &\ & d'                     \ar[from=rho,to=rhoprime,Rightarrow,"h"]
        \end{tikzcd}
        \end{equation*} such that $(\iota'h)\circ\varphi=\varphi'$ as 2-morphisms in $\Surf$, i.e. up to higher isotopy.
    \end{description}
For any category $X$, let $\const_X : \Disk_{/\Sigma} \to \Bimod$ be the functor that assigns $X$ to every object, the identity to every 1-morphism, and equality to every 2-morphism.

By definition, the 2-colimit of the functor\footnote{By the colimit of a functor, we mean the colimit over the diagram defined by its image in the target category. e.g. there is an arrow in the diagram for each morphism in the source category.} $\tilde \diskalg$ is the data of a category $\int_\Sigma \diskalg$ together with a universal cocone.
In other words we have a strong natural transformation $c:\tilde \diskalg \Rightarrow \const_{\int_\Sigma \diskalg}$ which induces a categorical equivalence for each $X\in \Bimod$:
\begin{equation}\label{eq:colimit-equiv}
    \Hom_{\Bimod}(\int_\Sigma \diskalg, X)\overset{\sim}{\to}\Nat(\tilde{\diskalg}, \const_X).
\end{equation}
Here the left hand side is the groupoid of bimodules and natural isomorphisms, while the right hand side is the groupoid of strong natural transformations and modifications (also sometimes called transfors or 2-transfors), which we recall in Appendix \ref{sec:modifications}. 
The equivalence \eqref{eq:colimit-equiv} sends a morphism $G: \int_\Sigma \diskalg \to X$ to the composition $$\tilde{\diskalg} \overset{c}{\Longrightarrow} \const_{\int_\Sigma \diskalg} \overset{\const_G}{\Longrightarrow} \const_X\ ,$$ which is a cocone over the slice category with tip $X$, see Figure \ref{fig:coconefunctor}.
\begin{figure}%[h]
\begin{center}
    \begin{tikzcd}
        \diskalg(d_1) \ar[dr]\ar[dd]\ar[rrr,crossing over,"c_{\iota_1}"]&&     &[2em]
        \int_\Sigma \diskalg \ar[dr,equal]\ar[dd,equal]\ar[rrr,crossing over,"G"]&&  &[1em]  
        X \ar[dr,equal]\ar[dd,equal]&&          \\
        & \diskalg(d_2)\ar[rrr,crossing over,"c_{\iota_2}"]&                  &[2em]
        & \int_\Sigma \diskalg\ar[rrr,crossing over,"G"] &               &[1em]
        & X &                       \\
        \diskalg(d_3) \ar[ur]\ar[rrr,crossing over,"c_{\iota_3}"]&&            &[2em]
        \int_\Sigma \diskalg \ar[ur,equal]\ar[rrr,crossing over,"G"]&&         &[1em]
        X \ar[ur,equal]&&
    \end{tikzcd}
\end{center}
\caption{The natural transformation $\tilde \diskalg \overset{c}{\Rightarrow} \const_{\int_{\Sigma'}\!\diskalg}\overset{\const_G}{\Longrightarrow}\const_X$, with all 2-morphisms suppressed.}
    \label{fig:coconefunctor}
\end{figure}

\begin{remark}\label{rmk:2funinducecocone}
For any 2-functor $F : \Surf \to \Bimod$, we have a canonical cocone $c_F$ over the slice category $\Disk_{/\Sigma}$ with tip $F(\Sigma)$, i.e. a natural transformation $c_F: (F|_{\Disk} \circ \text{for}) \Rightarrow \const_{F(\Sigma)}$. 
Its component on an object $(\iota:d\to \Sigma)$ is the 1-morphism $(c_F)_\iota = F(\iota): F(d)\to F(\Sigma)$ in $\Bimod$ and its component on a 1-morphism $\Phi=(\rho,\varphi):\iota \to \iota'$ of $\Disk_{/\Sigma}$ is $$(c_F)_{\Phi} =\begin{tikzcd}
    F(d) \ar[d, "F(\rho)"'] \ar[r, "F(\iota)"] & F(\Sigma) \ar[d, equal]\\
    F(d') \ar[r, "F(\iota')"'] \arrow[ur, Rightarrow, "F(\varphi)"] & F(\Sigma)
\end{tikzcd}$$
A symmetric monoidal 2-functor $F:\Surf \to \Bimod$ coincides with factorization homology of $F\vert_{\Disk}$ if and only if each cocone $c_F: (F|_{\Disk} \circ \text{for}) \Rightarrow \const_{F(\Sigma)}$ induces an equivalence $$\Hom_{\Bimod}(F(\Sigma), X)\overset{\sim}{\to}\Nat(F\vert\Disk, \const_X)$$ for every $X$ in $\Bimod$ and $\Sigma$ in $\Surf$.
\end{remark}

\begin{remark}
Factorization homology is proven to exist and be well-behaved in a $\otimes$-sifted cocomplete symmetric monoidal $\infty$-category \cite[Prop. 3.9]{Ayala_Francis_2019}. 
This is the case of $\Pr$ the bicategory of presentable cocomplete linear categories, but we do not know this result for $\Bimod$. 
Therefore a priori we should always write factorization homology with values in $\Pr$. 
It is a consequence of Theorem \ref{thm:main-result} that it indeed lies in $\Bimod$.
\end{remark}

%For\footnote{We use here that $\int_- \diskalg : \Surf \to \Cat$ is a 2-functor.} $F = \int_- \diskalg$, this pushforward $(c_{F})_*$ defines the categorical equivalence of \eqref{eq:colimit-equiv}. For brevity's sake we'll drop the $F$ subscript going forward.
%\begin{proposition}[Lemma 4.3.2.13 in \cite{LurieHTT} or Proposition 3.9 in \cite{Ayala_Francis_2019}]    Factorization homology extends to a 2-functor $\displaystyle \int_-\diskalg:\Surf \to \Bimod$.\end{proposition}An embedding $f: \Sigma\hookrightarrow \Sigma'$ induces a functor between the slice categories $\Disk_{/\Sigma}\to \Disk_{/\Sigma'}$ which commutes with the forgetful functors to $\Disk$.Therefore the cocone on $c': \tilde \diskalg' \Rightarrow \const_{\int_{\Sigma'}\diskalg}$ induces a cocone $f\cdot c':\tilde \diskalg \Rightarrow \const_{\int_{\Sigma'}\diskalg}$ and by universality this is equivalent to some functor $\int_f\diskalg:\int_{\Sigma}\diskalg \to \int_{\Sigma'}\diskalg$. Similarly, an isotopy of embeddings in $\Surf$, $\varphi: f\Rightarrow g$, induces a map of cocones $f\cdot c'\Rightarrow g\cdot c'$ which is equivalent to some natural isomorphism $\int_\varphi \diskalg: \int_fA\Rightarrow\int_g\diskalg$.

\subsection{Decomposition properties}\label{sec:decomposition}

For the proof of Theorem \ref{thm:main-result}, we will want to decompose any morphism in $\SkCat_\II(\Sigma)$ into a composition of morphisms happening over disks. 
We need a strong version of this decomposition which does not pass to isotopy classes because we also need a decomposition result for relations between ribbon graphs.
By ribbon graph we always mean an admissible $\II$-colored ribbon graph, sometimes considered up to admissible skein relations.

\begin{definition}\label{def:DecompositionTimes}
Let $\overline{T}$ be an $\II$-colored ribbon graph from $(\iota, W)$ to $(\iota', W')$ in $\Sigma \times[0,1]$, not considered up to isotopy. 
A \textbf{decomposition} of $\overline{T}$ is the data
$$\DD= (t_i,\; \kappa^i:D_i\inj \Sigma,\; \rho_i:d_i\inj D_i)_{i=1,\dots,n}$$
of times $0=t_0<t_1<\cdots<t_n=1$ and embeddings of disks $\kappa^i:D_i\inj \Sigma$ and $\rho_i:d_i \inj D_i$ such that $\overline{T}\cap(\Sigma\times[t_{i-1},t_i])\subseteq \kappa^i(D_i) \times [t_{i-1},t_i]$ and $\overline{T}$ is transverse to each $\Sigma \times \{t_{i-1}\}$ and intersects it at the centers of the disks $\kappa^i\circ \rho_i (d_i)\subseteq \kappa^i(D_i)\cap \kappa^{i-1}(D_{i-1})$ with correct framing. 
For compatibility with the prescribed source and target, we ask that $\kappa^1\circ\rho_1 = \iota$ and add the convention that $\kappa^{n+1}\circ \rho_{n+1} = \iota'$. 
To satisfy the admissibility condition on $\II$-colored ribbon graphs, we also ask that each $\overline{T}\cap(\Sigma \times \{t_{i-1}\})$ is non-empty. 
See Figure \ref{fig:essentially-surjective-decomposition}.

From such a decomposition we obtain in particular that each $\overline{T}\cap(\Sigma\times[t_{i-1},t_i])\subseteq \kappa^i(D_i) \times [t_{i-1},t_i] \simeq D_i\times[0,1]$ defines a morphism $T_i: (\rho_i, W_i) \to (\rho_i', W_i')$ in $\SkCat_\II(D_i)$, where $\rho_i'$ is the unique embedding $\rho_i':d_{i+1} \inj D_i$ such that $\kappa^i\circ\rho_i' = \kappa^{i+1}\circ \rho_{i+1}$ and that the two colors $W_i$,$W_{i-1}'$ of $\overline{T}$ at the center of each $\kappa^i\circ \rho_i (d_i)$ agree. 

Each $\DD$ induces a decomposition \begin{equation}\label{eq:tangle-decomposition-over-disks}
    T= \kappa^n_*T_n \circ \cdots\circ \kappa^1_*T_1
\end{equation} where $T$ is the morphism in $\SkCat_\II(\Sigma)$ represented by $\overline{T}$.
\end{definition}

\begin{figure}%[h]
    \centering
    \begin{tikzpicture}[xscale = 1, yscale=0.7]

\begin{scope}[xshift=-1.5cm,yshift=4.5cm]%Di+1
    \draw (1.5,0.1) .. controls (1.5,0.5) and (2.8,1)..(2.8,0.5).. controls (2.8,0.3) and (2.5,-0.3)..(2,-0.3)..controls (1.8,-0.3) and (1.5,-0.2) .. (1.5,0.1);
    \node at (2.1,0.2){$D_{i+1}$};
%Wi+2
\draw (2.5,0.45) circle(0.16 and 0.19);
%Wi+1
\draw (1.95,-0.13) circle(0.14);
\end{scope}

\begin{scope}[xshift=-0.8cm,yshift=1.5cm]%Di
    \draw (2.2,-0.1)..controls(2.2,-0.6) and (0.8,-1)..(0.8,-0.6).. controls (0.8,-0.3) and (2.2,0.3).. (2.2,-0.1);
    \node at (1.65,-0.3){$D_i$};
%Wi+1
\draw (1.95,-0.13) circle(0.14);
%Wi
\draw (1.2,-0.5) circle(0.18 and 0.12);
\end{scope}

\begin{scope}[xshift=2cm,yshift=0cm]%BottomSurf
%disk area 
    \fill[gray!40, opacity=0.5,draw=black,dashed] (2.2,-0.1)..controls(2.2,-0.6) and (0.8,-1)..(0.8,-0.6) -- ++(0,3) .. controls (0.8,2.7) and (2.2,3.3).. (2.2,2.9)-- ++(0,-3);  
    %surface
    \draw (0,0) .. controls (0,-0.5) and (0.5,-1) .. (1,-1).. controls (1.5,-1) and (1.5,-0.7).. (2,-0.7) .. controls (2.5,-0.7) and (2.5,-1)..(3,-1)..controls (3.5,-1) and (4,-0.5)..(4,0).. controls (4,0.5) and (3.5, 1).. (3,1)..controls(2.5,1) and (2.5,0.7)..(2,0.7)..controls (1.5,0.7) and (1.5,1)..(1,1)..controls(0.5,1) and (0,0.5)..(0,0);
    \draw (0.6,0).. controls (0.8,-0.2) and (1.2,-0.2)..(1.4,0);
    \draw (0.75,-0.1).. controls (0.9,0) and (1.1,0)..(1.25,-0.1);
    \draw (2.6,0).. controls (2.8,-0.2) and (3.2,-0.2)..(3.4,0);
    \draw (2.75,-0.1).. controls (2.9,0) and (3.1,0)..(3.25,-0.1);
    %disk i
    \draw (2.2,-0.1)..controls(2.2,-0.6) and (0.8,-1)..(0.8,-0.6).. controls (0.8,-0.3) and (2.2,0.3).. (2.2,-0.1);  
%Wi
\draw (1.2,-0.5) node[scale = 0.4]{$W_{i}$} circle(0.18 and 0.12); 
\end{scope}

\begin{scope}[xshift=2cm,yshift=3cm]%Midsurf
%disk area 
    \fill[gray!40, opacity=0.5,draw=black,dashed] (2.8,0.5).. controls (2.8,0.3) and (2.5,-0.3)..(2,-0.3)..controls (1.8,-0.3) and (1.5,-0.2) .. (1.5,0.1) -- ++(0,3) .. controls (1.5,3.5) and (2.8,4)..(2.8,3.5)-- ++(0,-3);  
    \node[draw, rectangle,inner sep = 2pt] (Ti+1) at (2.2,1.5){$T_{i+1}$};
    \draw (1.95,-0.13)..controls (1.95,0.5) and (2.2,1)..(Ti+1).. controls (2.2,2) and (2.5,3) .. (2.5,3.3);
    \begin{scope}[yshift=-3cm]
    \node[draw, rectangle,inner sep=2pt] (Ti) at (1.5,1.4){$T_{i}$};
    \draw (1.2,-0.4)..controls (1.2,0.2) and (1.5,0.7)..(Ti).. controls (1.5,2) and (1.95,2.2) .. (1.95,2.87);        
    \end{scope}
    %surface
    \draw (0,0) .. controls (0,-0.5) and (0.5,-1) .. (1,-1).. controls (1.5,-1) and (1.5,-0.7).. (2,-0.7) .. controls (2.5,-0.7) and (2.5,-1)..(3,-1)..controls (3.5,-1) and (4,-0.5)..(4,0).. controls (4,0.5) and (3.5, 1).. (3,1)..controls(2.5,1) and (2.5,0.7)..(2,0.7)..controls (1.5,0.7) and (1.5,1)..(1,1)..controls(0.5,1) and (0,0.5)..(0,0);
    \draw (0.6,0).. controls (0.8,-0.2) and (1.2,-0.2)..(1.4,0);
    \draw (0.75,-0.1).. controls (0.9,0) and (1.1,0)..(1.25,-0.1);
    \draw (2.6,0).. controls (2.8,-0.2) and (3.2,-0.2)..(3.4,0);
    \draw (2.75,-0.1).. controls (2.9,0) and (3.1,0)..(3.25,-0.1);
    %disk i+1
    \draw (1.5,0.1) .. controls (1.5,0.5) and (2.8,1)..(2.8,0.5).. controls (2.8,0.3) and (2.5,-0.3)..(2,-0.3)..controls (1.8,-0.3) and (1.5,-0.2) .. (1.5,0.1); 
    %disk i
    \draw (2.2,-0.1)..controls(2.2,-0.6) and (0.8,-1)..(0.8,-0.6).. controls (0.8,-0.3) and (2.2,0.3).. (2.2,-0.1);
%Wi+1
\draw (1.95,-0.13) circle(0.14);
\node[inner sep=0pt] (commoncircle) at (1.95,-0.13){};
\node[scale = 0.8, rectangle, draw] (condition) at (5, -0.5){$W_{i+1}=W_i'$};
\path[draw, ->] (condition) to (commoncircle);
\end{scope}

\begin{scope}[xshift=2cm,yshift=6cm]%Topsurf
    \draw (0,0) .. controls (0,-0.5) and (0.5,-1) .. (1,-1).. controls (1.5,-1) and (1.5,-0.7).. (2,-0.7) .. controls (2.5,-0.7) and (2.5,-1)..(3,-1)..controls (3.5,-1) and (4,-0.5)..(4,0).. controls (4,0.5) and (3.5, 1).. (3,1)..controls(2.5,1) and (2.5,0.7)..(2,0.7)..controls (1.5,0.7) and (1.5,1)..(1,1)..controls(0.5,1) and (0,0.5)..(0,0);
    \draw (0.6,0).. controls (0.8,-0.2) and (1.2,-0.2)..(1.4,0);
    \draw (0.75,-0.1).. controls (0.9,0) and (1.1,0)..(1.25,-0.1);
    \draw (2.6,0).. controls (2.8,-0.2) and (3.2,-0.2)..(3.4,0);
    \draw (2.75,-0.1).. controls (2.9,0) and (3.1,0)..(3.25,-0.1);
    %disk i+1
    \draw (1.5,0.1) .. controls (1.5,0.5) and (2.8,1)..(2.8,0.5).. controls (2.8,0.3) and (2.5,-0.3)..(2,-0.3)..controls (1.8,-0.3) and (1.5,-0.2) .. (1.5,0.1);
    %Wi+2
    \draw (2.5,0.45) node[scale = 0.4]{$W_{i+1}'$} circle(0.19 and 0.19);
\end{scope}

\draw (2,0) -- (2,6);
\draw (6,0) -- (6,6);
\node[xscale = 2, yscale = 1.3] at (1.9,1){$\hookrightarrow$};
\node[fill=white,inner sep=0pt] at (1.9,1.2){$\kappa^i$};
\node[xscale = 2, yscale = 1.3] at (1.9,4.5){$\hookrightarrow$};
\node[fill=white,inner sep=0pt] at (1.9,4.8){$\kappa^{i+1}$};

\begin{scope}[xshift=-3.5cm,yshift=4.5cm]%di+1
%Wi+2
\draw (3.1,.8) node (rhoi1p) {} circle(0.16 and 0.19);
\node[above=1pt, scale = 0.8] at (rhoi1p) {$d_{i+1}'$};
\node[xshift=6mm,yshift=2mm] at (rhoi1p) {\tiny $\rho'_{i+1}$};
\node[xshift=6mm,xscale=1.4,yscale=0.8] at (rhoi1p) {$\hookrightarrow$};
%Wi+1
\draw (2.55,0.17) node (rhoi1) {} circle(0.14);
\node[above=1pt, scale = 0.8] at (rhoi1) {$d_{i+1}$};
\node[xshift=6mm,yshift=1.5mm] at (rhoi1) {\tiny $\rho_{i+1}$};
\node[xshift=6mm,xscale=1.4,yscale=0.8] at (rhoi1) {$\hookrightarrow$};
\end{scope}

\begin{scope}[xshift=-2.8cm,yshift=1.5cm]%di
%Wi+1
\draw (2.5,.1) node (rhoip) {}  circle(0.14);
\node[above=1pt, scale = 0.8] at (rhoip) {$d_{i}'$};
\node[xshift=5mm,yshift=1.5mm] at (rhoip) {\tiny $\rho'_i$};
\node[xshift=5mm,xscale=1.3,yscale=0.8] at (rhoip) {$\hookrightarrow$};

%Wi
\draw (1.8,-0.5) node (rhoi) {} circle(0.18 and 0.12);
\node[above=1pt, scale = 0.8] at (rhoi) {$d_{i}$};
\node[xshift=5mm,yshift=1mm] at (rhoi) {\tiny $\rho_i$};
\node[xshift=5mm,xscale=1.3,yscale=0.8] at (rhoi) {$\hookrightarrow$};
\end{scope}
%\node[xscale = 1.3, yscale = 0.8] (rhoip) at (-0.3,1.3){$\hookrightarrow$};
%\node[xscale = 1.3, yscale = 0.8] (rhoi) at (-0.5,0.9){$\hookrightarrow$};
%\node[xscale = 1.3, yscale = 0.8] (rhoi1p) at (-0.3,4.9){$\hookrightarrow$};
%\node[xscale = 1.3, yscale = 0.8] (rhoi1) at (-0.5,4.35){$\hookrightarrow$};

%\node[above] at (rhoi1p) {\tiny $\rho'_{i+1}$};
%\node[above] at (rhoi1) {\tiny $\rho_{i+1}$};
%\node[above] at (rhoip) {\tiny $\rho'_{i}$};
%\node[above] at (rhoi) {\tiny $\rho_{i}$};

\draw[dashed] (6.1,0)-- ++(2,0) node[pos=1, right]{$t_{i-1}$};
\draw[dashed] (6.1,3)-- ++(2,0) node[pos=1, right]{$t_{i}$};
\draw[dashed] (6.1,6)-- ++(2,0) node[pos=1, right]{$t_{i+1}$};
\end{tikzpicture}
    \caption{Morphisms in the skein category decompose into ones living over overlapping disks.}\label{fig:essentially-surjective-decomposition}
\end{figure}

Not every $\overline{T}$ admits a decomposition, but each skein $T$ has a representative that can be decomposed as above.
The main requirement is that $\overline{T}$ is in good position:

\begin{definition}\label{def:good-position}
        A ribbon graph $\overline{T}$ is \textbf{in good position} if \begin{enumerate}
        \item the height function $\overline{T} \subseteq \Sigma\times [0,1]\to [0,1]$ has isolated critical points, and 
        \item at all times $t\in[0,1]$, there is an edge of $\overline{T}$ intersecting the level $\Sigma\times \{t\}$.
    \end{enumerate}
\end{definition}
We begin with a lemma that allows us to put ribbon graphs into good position. 

\begin{lemma}\label{lemma:morphisms-decompose-over-disks}
Every ribbon graph in good position admits a decomposition.
Every morphism $T$ in $\SkCat_\II(\Sigma)$ has a representative in good position, and is therefore a composition of morphisms happening over disks.
\end{lemma}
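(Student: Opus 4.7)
The plan is to prove the two statements in the order given. For the first, suppose $\overline{T}$ is in good position. Since $\overline{T}$ has only finitely many vertices and (by condition 1) finitely many critical points of the height function $h:\overline{T}\to[0,1]$, one can choose a subdivision $0=t_0<t_1<\cdots<t_n=1$ fine enough that each $t_i$ is a regular value of $h$ distinct from all vertex heights, and each slab $\Sigma\times[t_{i-1},t_i]$ contains at most one critical point of $h$ and at most one vertex. The non-empty, finite, transverse intersection $\overline{T}\cap(\Sigma\times\{t_{i-1}\})$ (non-empty thanks to condition 2) then gives a well-defined $\II$-labeling at each time $t_i$. To produce the disks $\kappa^i:D_i\hookrightarrow\Sigma$, one takes for $D_i$ a finite disjoint union of standard disks (allowed since $D_i\in\Disk$), one per connected component of the projection of the slab portion; the embeddings $\rho_i$ are then small standard disks around the preimages of the intersection points, adjusted at the endpoints so that $\kappa^1\circ\rho_1=\iota$ and $\kappa^{n+1}\circ\rho_{n+1}=\iota'$.

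The main technical point in this first step is showing that the projection of $\overline{T}\cap(\Sigma\times[t_{i-1},t_i])$ to $\Sigma$ genuinely fits into an embedded disjoint union of disks. Away from its single critical point or vertex, the slab portion is a collection of arcs transverse to every level, so as $t_i-t_{i-1}\to 0$ each such arc retracts vertically onto its bottom endpoint, and the connected component containing a critical point or vertex retracts onto that singular point. By uniform continuity applied to the compact set $\overline{T}$, for a fine enough subdivision each connected component of the horizontal projection lies in an arbitrarily small $\Sigma$-neighborhood of a finite set of points, which can be chosen to be an embedded disk. This is the step I expect to be the principal obstacle, since in the presence of handles of $\Sigma$ one must rule out arcs whose projection wraps around nontrivial topology; the compactness--uniform continuity argument does exactly this.

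For the second statement, given any representative $\overline{T}_0$ of $T$, I would adjust it in two stages. First establish condition 2: by admissibility, every connected component of $\Sigma\times[0,1]$ contains an edge of $\overline{T}_0$, so pick a point $p_C$ on such an edge in each component $C$ and, following the strand--dragging trick of Lemma \ref{lemma:push_skeins_wherever}, pull a small neighborhood of $p_C$ along a path $\gamma_C$ in $C\setminus\overline{T}_0$ whose projection to $[0,1]$ is surjective. This produces an isotopic representative $\overline{T}_1$ whose height function surjects onto $[0,1]$ on each component. Second, apply a small generic ambient perturbation to make $h$ restricted to the edges of $\overline{T}_1$ Morse and to separate the heights of vertices and critical points, achieving condition 1. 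Because condition 2 at a regular value amounts to a transverse non-empty intersection, it is preserved under any sufficiently small perturbation, so both conditions hold simultaneously for the resulting representative.

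Combining the two parts yields the final clause: the decomposition \eqref{eq:tangle-decomposition-over-disks} expresses $T$ as a composition of morphisms $\kappa^i_*T_i$ where each $T_i$ lives in $\SkCat_\II(D_i)$ for a disk $D_i$, as required.
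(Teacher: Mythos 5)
Your proposal is correct and follows essentially the same route as the paper: a compactness/covering argument yields the decomposition of a graph in good position (the paper takes small disks around each level's intersection points and extracts a finite subcover of $[0,1]$, your fine time subdivision being a Lebesgue-number variant of the same argument), and good position is obtained exactly as in the paper by the strand-pulling trick of Lemma \ref{lemma:push_skeins_wherever} together with a generic perturbation giving isolated critical points. The only loose phrase is that condition 2 survives the final perturbation because it is ``a transverse non-empty intersection'' -- that is an open condition level by level, not uniformly in $t$; the robust reason is that the pulled strand still runs from a point near height $0$ to one near height $1$, hence meets every intermediate level.
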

\begin{proof}
    Let $\overline{T}\subset \Sigma \times [0,1]$ be a ribbon graph in good position.
    We will build a decomposition of $\overline{T}$.
By definition $\overline T$ intersects every level $\Sigma\times\{t\}$ at a finite (non empty) collection of points, and this intersection is transverse for all but finitely many levels. 
For any $t$, let $D_t$ be a small neighborhood of the points $\overline T\cap(\Sigma\times\{t\})$ consisting of disks in $\Sigma\times\{t\}$.
    By continuity of the embedding, there exists an interval $U_t=(t-\varepsilon,t+\varepsilon)$ such that $\overline T\cap (\Sigma\times U_t)\subseteq D_t\times U_t$. 
%For small $\varepsilon$, we can also assume that $\overline T$ is transverse to $\Sigma\times \{t\pm\varepsilon\}$. %In particular, $\overline T\cap (\Sigma\times U)$ is of the form $\kappa^t_*T_t$ where $\kappa^t$ is the inclusion of $D_t$ in $\Sigma$ and $T_t$ is a skein in $D_t\times U_t$. 
    We obtain a cover $\{U_t\}_{t\in [0,1]}$ of the interval. 
By compactness we can find a finite sub-cover $(U_1,\dots,U_n)$. 
We restrict to a minimal subcover and order the intervals $U_i$ by increasing infimum. 
Each $U_i$ has associated disks $D_i$ and inclusion $\kappa^i:D_i\to \Sigma$.

Say $\oT$ has source and target $\iota$ and $\iota'$. We add to the list intervals $U_1 = [0,\varepsilon)$ and $U_n = (1-\varepsilon,1]$ for small enough $\varepsilon$ such that $\overline{T}\cap (\Sigma \times U_1)$ lies in $\iota(d)\times U_1$ and $\overline{T}\cap (\Sigma \times U_n)$ lies in $\iota'(d')\times U_n$. 

We can pick any $t_i \in U_i\cap U_{i+1}$ outside the finitely many critical values of the height function (and set $t_n=1$). 
We can moreover choose any embedding $\rho_i: d_i \to D_i$ corresponding to the inclusion of a small neighborhood of the finitely many framed points $\overline{T}\cap(\Sigma\times\{t_i\})$ which lies in both $D_i$ and $D_{i+1}$. 
Note that $\DD = (t_i, \kappa^i, \rho_i)_{i=1,\dots,n}$ is a decomposition of $\overline{T}$.    

Now consider any morphism $T$ in $\SkCat_\II(\Sigma)$. 
Having isolated critical points is a generic condition on the height function and we can find an isotopy representative $\oT$ that satisfies it. 
By admissibility, its source and target are non-empty and it satisfies the second condition of Definition \ref{def:good-position} in a small neighborhood of 0 and 1. 
Choose any point $p$ on any edge of $\oT$ and any path $\gamma$ in generic position going from $p$ to a point of height close to 1 and then to a point of height close to 0 in $(\Sigma \times [0,1])\smallsetminus \oT$. 
Isotope $\oT$ by pulling a small neighborhood of $p$ along the path $\gamma$. 
The resulting ribbon graph $\oT_{\gamma}$ is a representative of $T$ in good position. 
    \end{proof}
We want a similar result stating that the relations between ribbon graphs are local. 
We will need a strong form of locality that changes only one of the $T_i$'s in a decomposition at a time.
\begin{definition}
Two ribbon graphs $\oT$ and $\oT'$ are \textbf{locally skein equivalent} if they have a common decomposition $\DD = (t_i, \kappa^i:D_i\inj\Sigma, \rho_i)_{i=1,\dots, n}$ and there is an index ${i_0}$ such that $\oT$ and $\oT'$ agree strictly outside $\kappa^{i_0}(D_{i_0})\times [t_{{i_0}-1},t_{i_0}]$ and are related by isotopy and skein relations inside, i.e. $T_{i_0} = T_{i_0}'$ in $\SkCat_\II(D_{i_0})$.

% there exists times $0=t_0<t_1<\cdots<t_n=1$ and embeddings $\kappa^i:D_i\inj \Sigma$ which decompose both $T$ and $T'$ as above, such that $T$ and $T'$ agree strictly outside the image of one of the $\kappa^i\times [t_{i-1},t_{i}]$, and are related by isotopy and skein relations inside. %\\ More generally $T$ and $T'$ are said locally skein equivalent if they can be related by a sequence of such moves. 
\end{definition}
%By this we mean that any relation $T \sim T'$ can be decomposed $T = T_0 \sim T_1 \sim \cdots \sim T_n = T'$ such that $T_i$ and $T_{i+1}$ agree strictly outside of some cylinder $\iota \times \id_{[0,1]} : d \times [0,1] \hookrightarrow \Sigma \times [0,1]$.
\begin{lemma}\label{lemma:relations-are-local}
The relations between morphisms in $\SkCat_\II(\Sigma)$ are generated locally. 
More precisely, two ribbon graphs $\oT$ and $\oT'$ in good position represent the same morphism in $\SkCat_\II(\Sigma)$ if and only if they are related by a sequence of local skein relations as described above (possibly changing the decomposition).
\end{lemma}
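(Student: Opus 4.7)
The plan is to reduce to the two generating types of relations in $\SkCat_\II(\Sigma)$ and realize each by a chain of local skein equivalences. The ``if'' direction is immediate: a local skein equivalence takes place inside a cube $\kappa^{i_0}(D_{i_0})\times[t_{i_0-1},t_{i_0}] \subseteq \Sigma\times[0,1]$, hence is a bona fide admissible skein relation in the ambient thickening. For the converse, two ribbon graphs representing the same morphism in $\SkCat_\II(\Sigma)$ are related by a finite chain of ambient isotopies (fixing the boundary) and admissible skein relations confined to embedded cubes. By Lemma \ref{lemma:morphisms-decompose-over-disks} we may insert intermediate representatives in good position between consecutive elementary moves, so it suffices to handle each of these two moves individually.

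For a single admissible skein relation supported in an embedded cube $C \subseteq \Sigma\times[0,1]$, I would start with any decomposition $\DD$ of $\oT$ provided by Lemma \ref{lemma:morphisms-decompose-over-disks}, then refine it: insert the bottom and top heights of $C$ as new values among the $t_i$, and enlarge the relevant disks $D_i$ so that their projections to $\Sigma$ contain both the original slice of $\oT$ and the projection of $C$. At new time levels the admissibility condition (non-empty intersection with $\oT$) may fail; we restore it by pulling a strand through those levels using the argument of Lemma \ref{lemma:push_skeins_wherever}. Since $\oT$ and $\oT'$ agree strictly outside $C$, the resulting decomposition is valid for both ribbon graphs, and the skein relation becomes local in the single slice containing $C$.

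The harder step will be realizing an ambient isotopy $(\Phi_s)_{s\in[0,1]}$ with $\Phi_1(\oT)=\oT'$ as a chain of local moves. Taking $\Phi$ in generic position, the set of times at which $\oT_s := \Phi_s(\oT)$ fails to be in good position is a finite collection of Morse-theoretic events: pair creation or annihilation of critical points of the height function, or exchanges of critical heights. The conditions in Definition \ref{def:DecompositionTimes} are open and stable under small deformation, so each non-critical time admits a neighborhood on which a single decomposition remains valid for the whole family $\oT_s$; by compactness of $[0,1]$ we may cover it by finitely many such neighborhoods, and on each the restricted isotopy is supported inside one disk-slice of the shared decomposition, hence is a local skein equivalence. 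At a critical event the change is confined to a small $3$-ball, which can be absorbed into a single slice of a decomposition common to the ribbon graphs on either side of the event. The main obstacle I expect is maintaining admissibility while controlling the combinatorics of transitioning between different decompositions at these isolated critical events, for which Lemma \ref{lemma:push_skeins_wherever} is the crucial technical tool: it allows us to force a strand through any prescribed level set without altering the underlying skein class, ensuring that each transition and each refinement can be made compatible with the non-emptiness conditions built into good position.
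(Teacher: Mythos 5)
Your overall strategy matches the paper's in outline (reduce to isotopies and confined skein relations, work with good-position representatives and decompositions, use openness plus compactness in the isotopy parameter, and restore admissibility by pulling strands), but there are genuine gaps at the crucial isotopy step. First, the failures of good position along a generic family $(\oT_s)_{s\in[0,1]}$ are \emph{not} a finite collection of Morse-theoretic events: condition (2) of Definition \ref{def:good-position} (every level $\Sigma\times\{t\}$ meets the graph) is not a transversality condition and can fail on open intervals of $s$. The paper handles this not event-by-event but by modifying the isotopy once and for all: pull a strand of $\oT$ along a path staying $\delta$-close to $\Sigma\times\{0\}$ and $\Sigma\times\{1\}$, where $\delta$ is chosen using the fact that the ambient isotopy is the identity on $\partial(\Sigma\times[0,1])$, so that \emph{every} $\varphi_s(\oT_\gamma)$ is in good position. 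Your appeal to Lemma \ref{lemma:push_skeins_wherever} gestures in this direction, but that lemma concerns a single skein class, not preservation of good position along a whole family, so this uniform argument needs to be spelled out.

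Second, and more seriously, your claim that on a small $s$-interval ``the restricted isotopy is supported inside one disk-slice of the shared decomposition'' is false in general: a common decomposition for all $\oT_s$ only says each slice's portion of the graph stays inside its disk-slice, while the graphs at the two ends of the interval typically differ (slightly) in \emph{every} slice, whereas a local skein equivalence by definition changes a single $T_{i_0}$ and leaves the graph strictly unchanged elsewhere. The missing ingredient is a fragmentation theorem for isotopies: the paper invokes \cite[Corollary 1.3]{EK71} to decompose the short-time isotopy, up to higher isotopy, into a composition of isotopies each supported in one slab $\Sigma\times(t_{i-1}-\delta_{i-1},t_i+\delta_i)$ of a covering, and each such piece is then a genuine local move (using decompositions with the times $t_i\pm\delta_i$). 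Without this step the asserted chain of local equivalences is not produced. A smaller point: for a skein relation supported in an embedded cube $C$, the projection of $C$ to $\Sigma$ need not lie in any embedded disk, so ``enlarging the $D_i$'' may be impossible; the paper first confines the relation to a small ball by an ambient isotopy (already treated in the isotopy step) and only then reads it as a local relation.
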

\begin{proof}
We show that two isotopic ribbon graphs are related by a sequence of local skein relations. 
Let $\varphi = (\varphi_s:\Sigma\times[0,1] \tilde\to \Sigma\times[0,1])_{s\in[0,1]}$ be an ambient isotopy of $\Sigma\times[0,1]$ with $\varphi_0=\id$ and $\varphi_1(\oT)=\oT'$. 
Denote $\oT_s := \varphi_s(\oT)$ for $s\in [0,1]$. 

We first show that we can choose $\varphi$ such that all ribbon graphs $\oT_s$ are in good position. 
For generic $\varphi$, each height function $\oT_s\to [0,1]$ has isolated critical points (we do not ask that these are non-degenerate.) 
Consider the operation $\oT\leftrightarrow \oT_{\gamma}$ of pulling a strand close to $\Sigma\times\{0\}$ and to $\Sigma\times\{1\}$ along a path $\gamma$ described in the proof of Lemma \ref{lemma:morphisms-decompose-over-disks}.
This operation is induced by an ambient isotopy which deforms $\oT$ through ribbon graphs in good position.% (because $\oT$ is already in good position). 
Applying the ambient isotopy $\varphi$ to $\oT_\gamma$, we obtain a family of ribbon graphs $\varphi_s(\oT_\gamma) = (\oT_s)_{\varphi_s(\gamma)}$ which are each obtained from the $\oT_s$ by pulling a strand along the path $\varphi_s(\gamma)$. 
The $\oT_s$ have non-empty boundary points, so we can find a global $\varepsilon$ such that the height function surjects onto $[0,\varepsilon)$ and $(1-\varepsilon,1]$. 
The ambient isotopy $\varphi$ is the identity on the boundary so there exists a small $\delta>0$ such that $\varphi(\Sigma\times[0,\delta))$ stays in $\Sigma\times[0,\varepsilon)$ and similarly near 1.
Choosing the path $\gamma$ that's $\delta$-close to $\Sigma\times\{0\}$ and $\Sigma\times\{1\}$, we get that every $\varphi_s(\oT_\gamma)$ is in good position. 
Finally, $\varphi_1(\oT_\gamma) = \oT'_{\varphi_1(\gamma)}$ is also isotopic to $\oT'$ through ribbon graphs in good position because $\oT'$ is. 
We have produced an isotopy $\oT\sim \oT_{\gamma}\sim\oT'_{\varphi_1(\gamma)}\sim \oT'$ that only passes through ribbon graphs in good position.

We can now suppose that $\varphi$ is such that every $\oT_s$ is in good position. 
For any fixed $s_0\in [0,1]$ we can choose a decomposition $(t_i, \kappa^i:D_i\inj \Sigma, \rho_i:d_i\inj D_i)_{i=1,\dots,n}$ of $\oT_{s_0}$. 
Now the condition that $(t_i, \kappa^i:D_i\inj \Sigma, \rho_i:d_i\inj D_i)_{i=1,\dots,n}$ is a decomposition of $\oT_s$ is open which means that \begin{enumerate}
    \item for small $\varepsilon$, $(t_i, \kappa^i:D_i\inj \Sigma, \rho_i:d_i\inj D_i)_{i=1,\dots,n}$ is a decomposition of $\oT_{s_0\pm\varepsilon}$, and
    \item for small $\delta_1,\dots,\delta_{n-1}$, $(t_i\pm \delta_i, \kappa^i:D_i\inj \Sigma, \rho_i:d_i\inj D_i)_{i=1,\dots,n}$ is a decomposition of $\oT_{s_0\pm \varepsilon}$
\end{enumerate}
Therefore we have a globally valid decomposition for the isotopy $\psi_{s_0} := (\varphi_s)_{s\in [s_0-\varepsilon,s_0+\varepsilon]}$. 
Consider the covering $(\Sigma \times (t_{i-1}-\delta_{i-1}, t_i+\delta_i))_{i=1,\dots,n}$ of $\Sigma\times [0,1]$. 
Up to higher isotopy, $\psi_{s_0}$ can be decomposed into a composition of isotopies supported on each $\Sigma \times (t_{i-1}-\delta_{i-1}, t_i+\delta_i)$, \cite[Corollary 1.3]{EK71}. 
Each of these isotopies is a local skein relation in the sense above, using a decomposition $(t_i\pm \delta_i, \kappa^i:D_i\inj \Sigma, \rho_i:d_i\inj D_i)_{i=1,\dots,n}$ with appropriate signs. 
Finally, by compactness of $[0,1]$, the isotopy $\varphi$ can be decomposed into a finite composition of such $\psi_{s_0}$'s, and we have shown locality for isotopies.

Finally, we can use ambient isotopies to confine any skein relation to a small ball in the thickened surface, hence to a local skein relation. 
\end{proof}

\subsection{Modified skein categories compute factorization homology}
Let $\II \subseteq\AA$ be a tensor ideal in a ribbon category. 
As a corollary of Theorem \ref{theorem:skcats-a-2-functor}, the restriction of $\SkCat_\II$ to $\Disk$ is a unital disk algebra $\Idiskalg$ in $\Bimod$.
\footnote{Passing to $\Pr$ gives us the disk algebra in corresponding to $\widehat\II$ under the equivalence between disk algebras and balanced braided categories.}
We will use the notation 
$$\Idiskalg=\SkCat_\II\vert_{\Disk}$$ 
and will denote its extension to the slice category $\Disk_{/\Sigma}$ by $\tilde{\Idiskalg}$. 
The central claim is:
\begin{theorem}\label{thm:main-result}
    There is an equivalence of 2-functors $$\int_-\Idiskalg \simeq \SkCat_\II(-)$$ between factorization homology of $\Idiskalg$ in $\Bimod$ and $\II$-skein categories.
\end{theorem}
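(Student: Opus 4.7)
The plan is to apply the criterion of Remark \ref{rmk:2funinducecocone}: since $\SkCat_\II : \Surf \to \Bimod$ is a symmetric monoidal 2-functor (Theorem \ref{theorem:skcats-a-2-functor}), it comes equipped with a canonical cocone $c = c_{\SkCat_\II}: \tilde \Idiskalg \Rightarrow \const_{\SkCat_\II(\Sigma)}$ whose component at $\iota: d \inj \Sigma$ is the bimodule functor $\SkCat_\II(\iota)$. It suffices to show that for every $X \in \Bimod$ and $\Sigma\in\Surf$, post-composition with $c$ induces an equivalence of groupoids
\begin{equation*}
    \Hom_{\Bimod}\bigl(\SkCat_\II(\Sigma), X\bigr) \overset{\sim}{\longrightarrow} \Nat(\tilde{\Idiskalg}, \const_X).
\end{equation*}

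For essential surjectivity, we construct an inverse. Given a cocone $\eta: \tilde{\Idiskalg} \Rightarrow \const_X$, define a bimodule functor $G_\eta : \SkCat_\II(\Sigma) \to X$ as follows. On an object $(\iota, W)$ with $\iota: d \inj \Sigma$, set $G_\eta(\iota, W) := \eta_\iota(W)$, where $\eta_\iota : \Idiskalg(d) = \SkCat_\II(d) \to X$ is the bimodule functor assigned to the object $\iota \in \Disk_{/\Sigma}$. On a morphism $T: (\iota, W) \to (\iota', W')$, use Lemma \ref{lemma:morphisms-decompose-over-disks} to choose a representative $\overline{T}$ in good position with a decomposition $\DD = (t_i, \kappa^i: D_i \inj \Sigma, \rho_i)_{i=1,\dots,n}$ producing pieces $T_i: (\rho_i, W_i) \to (\rho_i', W_i')$ in $\SkCat_\II(D_i)$ as in \eqref{eq:tangle-decomposition-over-disks}. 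Define $G_\eta(T)$ to be the composite of the morphisms $\eta_{\kappa^i}(T_i)$ in $X$, with the 1-morphism components of $\eta$ applied to the embeddings of disks $\rho_i, \rho_i' : d_{\bullet}\inj D_i$ providing the structural isomorphisms needed to make the composition type-check.

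The central task is to check that $G_\eta(T)$ is well defined. Independence from the choice of decomposition $\DD$ (with $\overline T$ fixed) follows from the naturality of $\eta$ on the 1-morphisms of $\Disk_{/\Sigma}$ (inclusions of smaller disks into larger ones, which correspond to refining a decomposition) together with its 2-cells (making the resulting diagrams of composites commute up to the prescribed isomorphisms). Independence from the isotopy representative $\overline T$ and invariance under skein relations both reduce to local statements by Lemma \ref{lemma:relations-are-local}: it suffices to verify that a local skein equivalence at a single piece $T_{i_0}$ leaves $G_\eta(T)$ unchanged, which is immediate because $\eta_{\kappa^{i_0}}$ is a functor and already sends $T_{i_0} = T_{i_0}'$ in $\SkCat_\II(D_{i_0})$ to the same morphism. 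Functoriality of $G_\eta$ on compositions is tautological from the definition via decompositions (one concatenates decompositions), and naturality of the assignment $\eta \mapsto G_\eta$ in $\eta$ is verified on generating data.

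For fully faithfulness, a modification between two cocones $\eta, \eta'$ restricts in particular to a natural isomorphism between their components at each $\iota$, and the same decomposition argument shows that these assemble into a natural isomorphism $G_\eta \Rightarrow G_{\eta'}$; conversely any natural isomorphism $G \Rightarrow G'$ in $\Bimod$ defines a modification by whiskering with $c$. One then checks that $G_{c \circ G} \simeq G$ and that the cocone attached to $G_\eta$ is canonically isomorphic to $\eta$, both essentially by construction. Finally, symmetric monoidality of the equivalence follows from the fact that both sides send disjoint unions to tensor products by Theorem \ref{theorem:skcats-a-2-functor} and Definition \ref{def:factorization-homology}.

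The main obstacle is the well-definedness of $G_\eta$ on morphisms: a single morphism in $\SkCat_\II(\Sigma)$ admits many representatives in good position and many decompositions, and one must make sure that every discrepancy between them can be absorbed either by the cocone's higher cells over $\Disk_{/\Sigma}$ or by a local skein relation. This is where the strength of Lemmas \ref{lemma:morphisms-decompose-over-disks} and \ref{lemma:relations-are-local} (and hence of the admissibility-preserving Lemma \ref{lemma:push_skeins_wherever}) is crucial, and it explains why the slice 2-category $\Disk_{/\Sigma}$, rather than a 1-categorical truncation, is the natural site for factorization homology in this setting.
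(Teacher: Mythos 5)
Your strategy is the same as the paper's: exhibit $\SkCat_\II(\Sigma)$ as the 2-colimit by showing that post-composition with the canonical cocone of Remark \ref{rmk:2funinducecocone} gives an equivalence $\Hom_{\Bimod}(\SkCat_\II(\Sigma),X)\to\Nat(\SkCat_\II\vert_{\Disk}\circ\operatorname{for},\const_X)$, with the inverse on objects built by decomposing skeins over disks (Lemma \ref{lemma:morphisms-decompose-over-disks}) and with well-definedness reduced to local moves via Lemma \ref{lemma:relations-are-local}. The independence-of-decomposition step, the invariance under local skein relations, and the fullness/faithfulness arguments all track the paper's proof.

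There is, however, a genuine gap where you write that ``the cocone attached to $G_\eta$ is canonically isomorphic to $\eta$ \dots essentially by construction.'' It is not by construction, and this is in fact the most delicate step of the paper's argument. The natural candidate comparison cells are the maps $(\eta_{\Phi^\rho})_W$ attached to the special 1-morphisms $\Phi^\rho=(\rho,\id)$ of $\Disk_{/\Sigma}$, and for these to assemble into a modification $c_*G_\eta \Rrightarrow \eta$ one must verify the compatibility \eqref{diag:mutation-2-cell-compatibility} on an \emph{arbitrary} 1-morphism $\Phi=(\rho,\varphi)$ of the slice category, i.e.\ the identity \eqref{eq:mIsaModif}: $(\eta_{\Phi^\rho})_W\circ G_\eta(\SkCat(\varphi)_W)=(\eta_\Phi)_W$. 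The left-hand side is your construction evaluated on the ribbon graph traced out by the isotopy $\varphi$, while the right-hand side is the cocone's own 2-cell on $\Phi$; nothing identifies these a priori, since $G_\eta$ was built using only the values of $\eta$ on objects of $\Disk_{/\Sigma}$ and on the 1-morphisms with identity isotopy. The paper closes this with a separate argument: up to a 2-morphism of $\Disk_{/\Sigma}$ it factors $\Phi$, using a retraction onto half-disks and compactness of $[0,1]$, into a composite of 1-morphisms each induced by an isotopy happening inside a single embedded disk, where the desired identity becomes exactly the 2-morphism compatibility \eqref{eq:strnat2morph} of a strong natural transformation (equation \eqref{eq:mIsaModifLocally}). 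Without this step (or an equivalent one) you have produced a functor $G_\eta$ but not shown that $c_*G_\eta\simeq\eta$, so essential surjectivity is not established. A secondary, smaller omission of the same flavor: independence of the decomposition is not pure ``naturality on 1-morphisms''; one needs the explicit conjugating isomorphisms and the common-refinement moves (object, horizontal, vertical) that the paper spells out, though your sketch does gesture at the right mechanism there.
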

Let $c : \tilde{\Idiskalg}\Rightarrow \const_{\SkCat(\Sigma)}$ be the strong natural transformation obtained by functoriality of $\SkCat$ and the embeddings into $\Sigma$ in the slice category, as described in Remark \ref{rmk:2funinducecocone}.
We will show that for any linear category $X$ the functor
\begin{equation}\label{eq:cstar-functor}
    c_* : \Hom_{\Cat}(\SkCat(\Sigma), X) \to \Nat(\tilde{\Idiskalg},\const_X).
\end{equation}
is an equivalence of categories. 
When $X = \Fun(\CC^{op},\Vect)$ is a presheaf category, we have $$\Hom_{\Bimod}(\SkCat(\Sigma),\CC):=\Hom_{\Cat}(\SkCat(\Sigma), X)$$ and the equivalence \eqref{eq:cstar-functor} exhibits $\SkCat_\II(\Sigma)$ as the 2-colimit defining factorization homology. 
We split the proof into three claims.

\paragraph{Claim: $c_*$ is essentially surjective.}
\begin{proof}
    We will show that each strong natural transformation $\alpha : \tilde{\Idiskalg} \Rightarrow \const_X$ is isomorphic to $c_*F$ for some functor $F: \SkCat_\II(\Sigma)\to X.$
    Remember that the data specifying $\alpha$ is a functor $\alpha_{\iota}:\SkCat(d)\to X$ for each embedding $\iota:d\hookrightarrow\Sigma$ and a natural isomorphism $\alpha_{\Phi}:\alpha_\iota \Rightarrow\; \alpha_{\iota'}\circ \SkCat(\rho)$ for each 1-morphism
    $\Phi= (\rho:d\to d',\ \varphi: \iota \Rightarrow \iota'\circ \rho)$ in the slice category $\Disk_{/\Sigma}$. 
    We will frequently use special 1-morphisms where $\iota=\iota'\circ\rho$ and 
    $\varphi$ is the identity isotopy, denoted $\Phi^\rho = (\rho, \id_\iota)$. % $\alpha_\rho:= \alpha_{\rho, Id_\iota}$.

    For objects, we set $$F(\iota, W) := \alpha_\iota(W)\ ,$$ where by abuse of notation $W$ is identified with the object $(\id_d,W) \in \SkCat(d)$. 
The main part of this proof is constructing $F$ on morphisms and showing that it is well defined. 

\noindent\textbf{Step 1: Definition:}

    Let $T:(\iota,W)\to(\iota',W')$ be a morphism in $\SkCat(\Sigma)$.
    By Lemma \ref{lemma:morphisms-decompose-over-disks}, we have a decomposition $$T= \kappa^n_*T_n \circ \cdots\circ \kappa^1_*T_1$$ for a finite collection of embeddings $\kappa^i:D_i\hookrightarrow \Sigma$. 
This decomposition $\DD$ is not unique.

    Applying $\alpha_{\kappa^i}:\SkCat(D_i)\to X$ on $T_i$, we get a morphism $\alpha_{\kappa^i}(T_i):\alpha_{\kappa^i}(\rho_i,W_i)\to \alpha_{\kappa^i}(\rho_i',W_i')$. 
We would like to compose these morphisms, but they are not composable on the nose. We have to conjugate each one by isomorphisms $(\alpha_{\Phi^{\rho_{i}}})_{W_{i}}: \alpha_{\kappa^i\rho_i}(W_i) \tilde\to \alpha_{\kappa^{i+1}}(\rho_{i},W_{i})$ and $(\alpha_{\Phi^{\rho_{i}'}})_{W_{i+1}}: \alpha_{\kappa^i\rho_i'}(W_i') \tilde\to\alpha_{\kappa^i}(\rho_i',W_i')$. 
%Similarly for the source and target we are given $(\alpha_{\rho_1})_W: \alpha_\kappa(W) \tilde \to \alpha_{\kappa^1}(\rho_1, W)$ and $(\alpha_{\rho_n'})_{W'}^{-1}: \alpha_{\kappa^n}(\rho_n', W') \tilde \to \alpha_{\iota'}(W')$.
    The composable versions of the $\alpha_{\kappa^i}(T_i)$ are \begin{equation}\label{eq:defbetai}
        \beta_i := (\alpha_{\Phi^{\rho_{i}'}})_{W_i'}^{-1} \circ \alpha_{\kappa^i}(T_i) \circ (\alpha_{\Phi^{\rho_{i}}})_{W_i}: \alpha_{\kappa^i\rho_i}(W_i)\to \alpha_{\kappa^i\rho_i'}(W_i')\ .
    \end{equation}
    We define \begin{equation}\label{eq:essentially-surjective-morphism-def}
        %F(T):= (\alpha_{\rho_n'})_{W'}^{-1} \circ \alpha_{\kappa^n}(T_n) \circ (\alpha_{\rho_{n-1}})_{W_{n-1}}\circ \cdots \circ (\alpha_{\rho_{1}'})_{W_{2}}^{-1} \circ \alpha_{\kappa^1}(T_1) \circ (\alpha_{\rho_1})_W
        F(T,\DD):= \beta_n \circ \cdots \circ \beta_1 
    \end{equation} 
   and next prove that this does not depend on the choice of the decomposition $\DD$ of $T$.
   
    \vskip10pt
    \noindent\textbf{Step 2: Independence of decomposition for a fixed isotopy representative:}
    
We will show that for an isotopy representative $\overline{T}$ of $T$ in good position, the definition $$F(\overline{T}) := F(T,\DD)\quad \text{ for any decomposition $\DD$ of $\overline{T}$}$$ does not depend on $\DD$. % and is well defined. 
Again, such a decomposition exists by Lemma \ref{lemma:morphisms-decompose-over-disks}.
 We prove independence using a common refinement of two decompositions of $\overline{T}$.
%We say that an isotopy representative $\overline{T}$ of $T$ is generic if the projection $\overline{T}\inj \Sigma\times[0,1]\twoheadrightarrow [0,1]$ has finitely many critical points.  
%, i.e. there exists times $0=t_0<t_1<\cdots<t_n=1$ and embeddings $\kappa^i: D_i\hookrightarrow \Sigma,\ \rho_i:d_i\hookrightarrow D_i$ such that $\overline{T}\cap(\Sigma\times[t_{i-1},t_i])\subseteq \kappa^i(D_i) \times [t_{i-1},t_i]$ and $\overline{T}$ is transverse to each $\Sigma \times \{t_i\}$ and intersects it at the centers of the disks $\kappa^i\circ \rho_i\subseteq \kappa^i(D_i)\cap \kappa^{i-1}(D_{i-1})$ with correct framing.
%We show that for a fixed generic $\overline{T}$, the definition of $F(\overline{T})$ above does not depend on the choice of the $t_i$'s, $\kappa^i$'s and $\rho_i$'s. 
Consider two choices $$\DD = (t_i, \kappa^i: D_i\hookrightarrow \Sigma, \rho_i:d_i\hookrightarrow D_i)_{i\leq n}\text{ and }\DD' = (t_j', {\kappa^j}': D_j'\hookrightarrow \Sigma, \rho_j':d_j'\hookrightarrow D_j')_{j\leq n'}.$$
Up to using an intermediate choice with a small perturbation of the $t_i$'s, we can suppose that the $t_i$ and the $t_j'$ are all distinct. 
Therefore their union $(\hat t_l)_{l\leq n+n'-1}$ gives a vertically-finer decomposition of either. 
Each $\overline{T}\cap(\Sigma\times[\hat t_{l-1},\hat t_l])$ lies in both a $D_i$ and a $D_j'$ and therefore in their intersection.
A small neighborhood of the framed center of the image of each $\rho_i$, $\rho_i'$ also lies in their intersection. 
The intersection $D_i \cap D_i'$ may not be a disk, but we can re-decompose $\overline{T}$ there. We obtain a new decomposition $\widehat\DD$ of $\overline{T}$ which is both vertically and horizontally finer than $\DD$ and $\DD'$. 
It is related to either by a sequence of the following moves:
    \begin{enumerate}
    \item Object refinement: for $i>1$, replace $\rho_i:d_i \inj D_i$ by a smaller $\hat\rho_i: \hat d_i\to D_i$ such that $\hat\rho_i(\hat d_i)\subseteq \rho_i(d_i)$ which gives the same framed points on the centers.
    \item Horizontal refinement: suppose that there is a subdisk $\hat D_i \overset{\kappa}{\inj} D_i$ containing both $\rho_i(d_i)$ and $\rho_{i+1}(d_{i+1})$ and such that $\overline{T}\cap(\Sigma\times[t_{i}, t_{i+1}])$ lies in $\kappa^i(\hat D_i)\times[t_{i}, t_{i+1}]$. 
Replace $D_i$ by $\hat D_i$.
    \item Vertical refinement: suppose that $D_{i+1}=D_i$ at time $t_{i+1}$.
      Then forget $(t_{i+1}, D_{i+1}, \rho_{i+1})$. (The refinement is actually going in the other direction but this one is easier to write.) %$\overline T \cap \Sigma\times\{\hat t\}$ is transverse. Then add $(\hat t, \hat D, \hat\rho)$ where $\hat D = D_i$ and $\hat\rho$ is any embedding $\hat \rho : \hat d \inj \hat D$ whose centers are the framed points $\overline T \cap \Sigma\times\{\hat t\}$.
    \end{enumerate}
Let us show invariance of $F(\overline{T})$ under each of these moves: 
\begin{enumerate}
    \item For object refinement, we can factor $\hat\rho_i$ as $\hat d_i \overset{\kappa}{\inj} d_i \overset{\rho_i}{\inj} D_i$. 
Therefore by definition of a strong natural transformation \eqref{eq:strnatCompo} we have $\alpha_{\Phi^{\hat\rho_i}} = \alpha_{\Phi^{\rho_i}}\circ \alpha_{\Phi^{\kappa}}$. 
The extra term in $\beta_i$ cancels with its inverse in $\beta_{i-1}$ in the definition of $F(\overline{T})$.
    \item For horizontal refinement, we show that the two possible definitions of $\beta_i$ agree. We drop indices $i$ and restrict to $\Sigma\times[t_{i}, t_{i+1}]$ as the rest will not feature. We need to compare the top and bottom compositions below \begin{equation}
        \begin{tikzcd}
        \alpha_{\iota\kappa\rho}(W) \ar[d, equal] \ar[r, "(\alpha_{\Phi^{\kappa\rho}})_W"] & \alpha_\iota(\kappa\rho,W) \ar[r, "\alpha_\iota(\kappa_*T_1)"] \ar[d, <-, "(\alpha_{\Phi^{\kappa}})_{(\rho,W)}"] & \alpha_\iota(\kappa\rho',W') \ar[r, "(\alpha_{\Phi^{\kappa\rho'}})_W^{-1}"] \ar[d, <-, "(\alpha_{\Phi^{\kappa}})_{(\rho',W')}"] & \alpha_{\iota\kappa\rho'}(W') \ar[d, equal]
        \\
        \alpha_{\iota\kappa\rho}(W) \ar[r, "(\alpha_{\Phi^{\rho}})_W"]  & \alpha_{\iota\kappa}(\rho,W) \ar[r, "\alpha_{\iota\kappa}(T_1)"] & \alpha_{\iota\kappa}(\rho',W') \ar[r, "(\alpha_{\Phi^{\rho'}})_{W'}^{-1}"] & \alpha_{\iota\kappa\rho'}(W')
    \end{tikzcd}
    \end{equation}
    The first and last square commute by \eqref{eq:strnatCompo} and the middle square commutes by naturality of $\alpha_{\Phi^{\kappa}}$.
    \item For vertical refinement, we simply use that $\alpha_{\kappa^i}$ is a functor and preserves composition.
\end{enumerate}
We have shown that $F(\overline{T})$ is well defined.

\noindent\textbf{Step 3: Invariance under isotopy and skein relations:}
Let $\oT$ and $\oT'$ be two ribbon graphs in good position which represent the same skein $T$.
We must show that $F(\oT) = F(\oT')$.
We have shown Lemma \ref{lemma:relations-are-local} that it is enough to consider local skein relations $\overline{T}\sim \overline{T}'$ where $\overline{T}$ and $\overline{T}'$ have a common decomposition and agree strictly except that one of the $T_i$'s change by an equivalent skein in $\SkCat(D_i)$. 
The definition of $F$ is invariant under such a move, as $\alpha_{\kappa^i}(T_i)$ is unchanged.
%$(t_i, \kappa^i: D_i\hookrightarrow \Sigma, \rho_i:d_i\hookrightarrow D_i)_{i\leq n}$, agree strictly outside the cylinder $D_i\times[t_{i-1},t_i]\overset{\kappa^i\times \id}{\inj}\Sigma \times [0,1]$ and are related by isotopy and skein relations inside.
%We will actually need a slight refinement of this statement. The set of times $t$ such that $\overline{T}$ intersects $\Sigma\times \{t\}$ transversely is open dense, therefore there exists a common decomposition $0=t_0<t_1<\dots<t_n=1$ of $\overline{T}$ and $\overline{T}'$ as above. Moreover, there exists $\varepsilon>0$ such that perturbing the $t_i$'s  by less than $\varepsilon$ preserves these decompositions. Now, using the same arguments as in Lemma \ref{lemma:relations-are-local} each relation can be decomposed into a series of relations that do not change $\overline{T}$ outside $D_i \times [t_i-\varepsilon, t_{i+1}+\varepsilon]$. Then indeed in the decomposition of $T$ only $T_i$ changes by a skein which is equivalent in the skein category of the disk, and $\alpha_{\kappa^i}(T_i)$ does not change.
 
    \noindent\textbf{Step 4: $c_*F$ is isomorphic to $\alpha$:}

    We define a modification $m:c_*F \Rrightarrow \alpha$. 
This is the data of a natural isomorphism $F \circ \SkCat(\iota) \Rightarrow \alpha_\iota$ between functors $\SkCat(D) \to X$ for every object $\iota:D\inj \Sigma$ of $\Disk_{/\Sigma}$.

    On objects of the form $(\id_D, W)$, $c_*F$ agrees with $\alpha$ by definition. 
On an object $(\rho:d\inj D,W)$, we have an isomorphism $(\alpha_{\Phi^\rho})_W:c_*F(\rho,W):= \alpha_{\iota\rho}(W) \to \alpha_\iota(\rho,W)$. 
We set $$m(\iota)_{(\rho,W)} := (\alpha_{\Phi^\rho})_W\ .$$ 
    
Let us check that this defines a natural transformation $m(\iota)$. 
Let $T: (\rho,W)\to (\rho',W')$ be a morphism in $\SkCat(D)$. 
Then $\SkCat(\iota)(T) = \iota_*T$ is already decomposed with $n=1$. 
Naturality is built in the definition \eqref{eq:defbetai}: $$F(\iota_*T) := \beta_1 := (\alpha_{\Phi^{\rho'}})_{W'}^{-1} \circ \alpha_{\iota}(T) \circ (\alpha_{\Phi^\rho})_{W}\ .$$
%so the diagram \begin{equation*}
%    \begin{tikzcd}
%        \alpha_{\iota\rho}(W) \ar[d, "F(\iota_*T)"'] \ar[rr, "m(\iota)_{(\rho,W)}"] & & \alpha_\iota(\rho,W)\ar[d, "\alpha_{\iota}(T)"] \\
%        \alpha_{\iota\rho'}(W')  \ar[rr, "m(\iota)_{(\rho',W')}"'] & & \alpha_\iota(\rho',W') 
%    \end{tikzcd}
%\end{equation*} commutes. 

Now, let us check that $m$ is a modification. 
Let $\Phi = (\rho,\varphi):\iota \to \iota'$ be a 1-morphism in $\Disk_{/\Sigma}$. 
We need to check \eqref{diag:mutation-2-cell-compatibility} that 
\iffalse
\begin{equation}
\begin{tikzcd}[column sep=50pt]
        \SkCat(d) \arrow[d, "\SkCat(\rho)"'] \arrow[r, "F\circ \SkCat(\iota)"] & X \arrow[d, equal]\\
        \SkCat(d')  \arrow[r, "F\circ \SkCat(\iota')",""{name=alpha,below}] \arrow[d, equal] & X \arrow[to = 1-2, from= 2-1, Leftrightarrow, "(c_*F)_\Phi"] \arrow[d, equal] \\
        \SkCat(d') \arrow[r, "\alpha_{\iota'}"', ""{name=beta,above}]& X
        \ar[from=beta,to=alpha,Leftarrow,"m_{\iota'}"]
    \end{tikzcd} \quad\overset?=\quad
    \begin{tikzcd}[column sep=50pt]
    \SkCat(d) \arrow[d, equal] \arrow[r, "F\circ \SkCat(\iota)",""{name=alpha,below}] & X \arrow[d, equal] \\
        \SkCat(d) \arrow[d, "\SkCat(\rho)"'] \arrow[r, "\alpha_\iota", ""{name=beta,above}] & X \arrow[d, equal]\\
        \SkCat(d')  \arrow[r, "\alpha_{\iota'}"] & X \arrow[to = 2-2, from= 3-1, Leftrightarrow, "\alpha_\Phi"] \ar[from=beta,to=alpha,Leftarrow,"m_{\iota}", shorten=5]
    \end{tikzcd} 
\end{equation}
It is enough to check it on objects of the form $(\id_d,W)$ which generate. Then we have to check that
\fi
\begin{equation}\label{eq:mIsaModif}
    (\alpha_{\Phi^\rho})_W \circ F(\SkCat(\varphi)_W) \overset?= (\alpha_\Phi)_W
\end{equation}
for any $(\id_d,W)$ in $\SkCat(d)$.%, which span the category.

This is not immediate: in the left hand side $F(\SkCat(\varphi)_W)$ is computed using $\alpha$ on objects of $\Disk_{/\Sigma}$ evaluated on morphisms in the skein categories of disks. 
The right hand side uses $\alpha$ on 1-morphisms of $\Disk_{/\Sigma}$. 
We need to show that these are related, and essentially this is true because morphisms in the skein category encode isotopies happening over disks. 

%We need to compute $F(\SkCat(\varphi)_W)$. Remember that $T := \SkCat(\varphi)_W$ is the braid in $\Sigma\times[0,1]$ drawn by the isotopy $\varphi$ on the centers of the disks $\iota(d)$, colored by $W$. It is already generic. Find a decomposition $\DD = (t_i, \kappa^i: D_i\hookrightarrow \Sigma, \rho_i:d_i\hookrightarrow D_i)_{i\leq n}$. Each $T \cap (\kappa^i(D_i)\times[t_{i-1},t_i])$ is also a braid, drawn by an isotopy $\psi_i$ of the embedding $\rho_i:d_i\inj D_i$.

We first show that \eqref{eq:mIsaModif} holds for the special case of an isotopy happening inside a bigger disk. 
Consider two embeddings $\rho, \rho':d\inj D$ and an isotopy $\psi:\rho\Rightarrow\rho'$ in $\Disk$. 
Then given an embedding $\kappa:D\inj\Sigma$, we have a 1-morphism $\Psi = (\rho',\kappa\psi: \kappa\rho\Rightarrow\kappa\rho')$ from $\kappa\rho:d\inj\Sigma$ to $\kappa:D\to \Sigma$ in $\Disk_{/\Sigma}$. 
This 1-morphism really comes from an isotopy living over a disk, and indeed it is isomorphic to the 1-morphism $\Phi^\rho=(\rho, \id_{\kappa\rho})$ by the following 2-morphism in $\Disk_{/\Sigma}$:
\begin{equation}
    \begin{tikzcd}[row sep = 40pt]
        d \ar[bend left,rr,"\rho",""{name=rho,below}] \ar[dr, "\kappa\rho"', ""{name=kaprho, above, pos = 0.8},""{name=kaprhoup, above, pos = 0.5}] & & D  \ar[dl, "\kappa", ""{name=kap, above}]\\
        & \Sigma 
        \ar[from = kaprho, to = 1-3, Rightarrow, bend right = 10pt, "\kappa\psi"{above= 1pt, pos = 0.25} , shorten > = 10pt]\ar[from = kaprhoup, to = 1-3, equal, dashed, bend left = 10pt, shorten > = 10pt]
    \ar[from = 1-1, to = 1-3, bend right,"\rho'" pos = 0.15,""{name=rhop,above}] 
        \ar[from = rho, to = rhop, Rightarrow, "\psi" pos = 0.4]
    \end{tikzcd}
\end{equation}
The compatibility \eqref{eq:strnat2morph} of $\alpha$ with 2-morphisms gives precisely
\begin{equation}\label{eq:mIsaModifLocally}
(\alpha_\Psi)_{W} = \alpha_\kappa(\SkCat(\psi)_W) \circ (\alpha_{\Phi^\rho})_W =: (\alpha_{\Phi^{\rho'}})_W \circ F(\SkCat(\psi)_W) \ .
%    (\alpha_{\Phi^\rho})^{-1}_W=(\alpha_\Psi)^{-1}_{(\rho',W)}\circ \alpha_\kappa(\SkCat(\psi)_W) \ .
\end{equation}

To prove the general formula we need to decompose $\Phi$ into such $\Psi$s and $\Phi^\rho$s. 
Let $\frac{1}{2}d$ denote the disjoint union of radius $\frac{1}{2}$ disks in the disjoint union of radius 1 disks $d$. 
Denote $\nu:\frac{1}{2} d\inj d$ the inclusion and $r=(\frac{2-s}{2}\id:d\to d)_{s\in[0,1]}$ the retraction of $d$ on the image of $\nu$. % Note that the 1-morphism $\Phi^\nu$ induced by the inclusion $\nu:\frac{1}{2} d\inj d$ is an isomorphism in $\Disk_{/\Sigma}$ for any $\iota:d\inj \Sigma$. Its inverse is induced by the retraction $r=(\frac{2-s}{2}\id:d\to d)_{s\in[0,1]}$. %, the 1-morphism $\Phi^\nu$ is an isomorphism.
By continuity and compacity, there exists $t_0=0<t_1<\dots<t_n=1$ such that $\varphi_s(\frac12d) \subseteq \varphi_{t_i}(d)$ for $s\in [t_{i-1},t_{i}]$. 
In particular $\varphi\vert_{[t_{i-1},t_{i}]\times\frac12d}$ is of the form $(\varphi_{t_i})\psi_i$ for some isotopy $\psi_i$ between $\nu$ and another embedding $\rho_i':\frac{1}{2}d\inj d$. 
Together with the retraction, this gives a 1-morphism
\begin{equation}
\Psi_i = 
    \begin{tikzcd}[column sep = 100pt, row sep = 40pt]
        d \ar[d, "\varphi_{t_{i-1}}"  pos = 0.3] \ar[r, "\frac12\id"]& \frac12d  \ar[d, "\varphi_{t_{i-1}}\vert_{\frac12d}" pos = 0.3] \ar[r, "\nu"] & d  \ar[d, "\varphi_{t_i}"  pos = 0.3] \\
        \Sigma \ar[r, equal]&\Sigma \ar[r, equal]&\Sigma
        \arrow[from = 2-1, to = 1-2, "\varphi_{t_{i-1}}r"' sloped, Rightarrow, shorten = 10pt]
        \arrow[from = 2-2, to = 1-3, "\varphi\vert_{[t_{i-1},t_{i}]\times\frac12d}"' sloped, Rightarrow, shorten = 10pt]
    \end{tikzcd}
\end{equation}
induced by an isotopy $\psi_ir : \id_d \Rightarrow \rho_i'\circ\frac12\id_d$ in $\Disk$.
%Now for every $t \in [0,1]$ there exists $\varepsilon>0$ such that $\varphi_s(\frac12d) \subseteq \varphi_t(d)$ for $s\in [t-\varepsilon,t+\varepsilon]$. In particular the embedding $\varphi_{t+\varepsilon}:\frac12d\inj\Sigma$ factors though some embedding $\rho:\frac12d\inj d$.

The 1-morphism $\Phi$ is isomorphic, using the retraction $r$, to the composition 
\begin{equation}
\Phi \simeq \Phi^{\rho}\circ \Psi_n\circ\cdots\circ\Psi_1
\end{equation}
Using that $\alpha$ preserves composition \eqref{eq:strnatCompo} and applying \eqref{eq:mIsaModifLocally} on every term gives us a formula for the right hand side of \eqref{eq:mIsaModif} which agrees with the formula for the left hand side given by the decomposition $$\DD = (t_i, \kappa^i:=\varphi_{t_i}:d\inj \Sigma, \rho_i:=\nu: \frac{1}{2}d \inj d)$$ of $\SkCat(\varphi)_W$.
\end{proof}

\paragraph{Claim: $c_*$ is faithful.}
\begin{proof}
    We will show that $c_*$ is injective on morphisms. 
This proof comes down to unraveling definitions until we can compare two collections of morphisms in $X$. 
Let $\eta : F \Rightarrow G$ be a natural transformation between arbitrary functors $F,G: \SkCat(\Sigma) \to X$. 
It is entirely determined by its components:
    \begin{equation}\label{eq:eta-components}
        \big\{ \eta_{(\iota, W)} \in \Hom_X\left(F(\iota,W), G(\iota,W)\right)\;\big|\; \iota : d \hookrightarrow \Sigma, W \in \SkCat(d) \big\}.
    \end{equation}
    The modification $c_*\eta : c_* F \Rrightarrow c_*G$ is likewise determined by its components: 
        \begin{equation}\label{eq:cstar-eta-components-1}
        \left\{\left. \begin{tikzcd}
            \SkCat(d) \ar[r,bend left=60,"c_*G(\iota)",""{name=G,below}] \ar[r,bend right=60,"c_*F(\iota)"',""{name=F,above}] &[5mm] X 
            \ar[from=F,to=G,Rightarrow,"c_*\eta_\iota" description]
        \end{tikzcd}\;\right|\; \iota : d \hookrightarrow \Sigma \right\}.
    \end{equation}
    Here we've used that $\tilde{\Idiskalg}(\iota:d \hookrightarrow \Sigma) = \SkCat(d)$ while $\const_X(\iota) = X$.
    Each  $c_*\eta_\iota$ is a natural transformation and so also determined by components. 
These are indexed over objects $W \in \SkCat(d)$.
    Hence \eqref{eq:cstar-eta-components-1} can be re-written as
    \begin{equation}\label{eq:cstar-eta-components-2}
        \left\{ \left. (c_*\eta_\iota)_W : c_*F(\iota)(W) \to c_*G(\iota)(W) \;\right|\; \iota : d \hookrightarrow \Sigma, W \in \SkCat(d)\right\}.
    \end{equation}
    Next, recall from the construction of $c_*$ in \ref{sec:factorization-homology} that $c_*F(\iota)(W) = F(\SkCat(\iota)(W)) =: F(\iota,W)$ (same for $G$) while $(c_*\eta_\iota)_W = \eta_{(\iota,W)}$.
    It follows that the collections in \eqref{eq:eta-components} and \eqref{eq:cstar-eta-components-2} have the exact same elements and indexing set.

    Therefore if $c_* \eta = c_* \eta'$ as modifications, it would mean that $\eta_{(\iota,W)} = \eta'_{(\iota,W)}$ as morphisms in $X$ for all objects $(\iota,W) \in \SkCat(\Sigma).$ This can happen if and only if $\eta = \eta'$ as natural transformations.
\end{proof}

\paragraph{Claim: $c_*$ is full.}
\begin{proof}
    We will show that $c_*$ is surjective on morphisms.

    Let $m : \alpha \Rrightarrow \beta$ be a modification between strong natural transformations $\alpha,\beta : \tilde{\Idiskalg} \Rightarrow \const_X$.
    We are interested in fullness, so we assume $\alpha = c_*F$ and $\beta = c_*G$ for functors $F,G: \SkCat(\Sigma) \to X$.
    As in \eqref{eq:cstar-eta-components-2}, $m$ is determined by its constituent natural transformations $m_\iota$ for $\iota : d \hookrightarrow \Sigma$, which in turn are described by their component morphisms $(m_\iota)_W$, with $W \in \SkCat(d)$.

    We define a (presumed) natural transformation $\eta : F \Rightarrow G$ by $\eta_{(\iota,W)} := (m_\iota)_W$. 
    It remains to show that $\eta$ is indeed natural.
    Let $T : (\iota: d \hookrightarrow \Sigma, W) \to (\iota' : d' \hookrightarrow \Sigma, W')$ be a morphism in $\SkCat(\Sigma)$.
    We will prove that the following diagram in $X$ is commutative:
    \begin{equation}\label{diag:fullness-naturality-diagram}
        \begin{tikzcd}
            F(\iota, W) \ar[r,"F(T)"] \ar[d,"\eta_{(\iota,W)}"'] & F(\iota', W') \ar[d,"\eta_{(\iota',W')}"] \\
            G(\iota, W) \ar[r,"G(T)"']\ar[ru,equal,"?"] & G(\iota',W').
        \end{tikzcd}
    \end{equation}

Let $T = \kappa^n_*T_n \circ \cdots \circ \kappa^1_*T_1$ be a decomposition over disks with the $T_i$ as in \eqref{eq:tangle-decomposition-over-disks}.
Each component $m_{\kappa^i}$ of the original modification is a natural transformation, so for each $T_i: (\rho_i, W_i) \to (\rho_i', W_i')$ in $\SkCat(d_i)$ we have a commutative square in the category $X$:
\begin{equation}
        \begin{tikzcd}
        F(\kappa^i\rho_i, W_i) \ar[r,"F(\kappa_*^iT_i)"] \ar[d,"\displaystyle\eta_{(\kappa^i\rho_i,W_i)}"'] & F(\kappa^i\rho_i', W_i') \ar[d,"\displaystyle\eta_{(\kappa^i\rho_i',W_i')}"] \\
        G(\kappa^i\rho_i, W_i) \ar[r,"G(\kappa_*^iT_i)"] & G(\kappa^i\rho_i',W_i').
    \end{tikzcd}
\end{equation}
 Gluing these diagrams together we have:
    \begin{equation*}
        \begin{tikzcd}[column sep = 14pt]
         F(\iota,W) \ar[r,equal]\ar[d,"\displaystyle\eta_{\iota,W}"] &[-6mm]  F(\kappa^1\rho_1,W_1) \ar[r,"F(\kappa_*^1T_1)"]\ar[d, "\displaystyle\eta_{\kappa^1\rho_1,W_1}"] &[3mm] F(\kappa^1\rho_1',W_1') \ar[r,equal]\ar[d,"\displaystyle\eta_{\kappa^1\rho_1',W_1'}"] &[-6mm] F(\kappa^2\rho_2,W_2) \ar[r,"F(\kappa_*^2T_2)"]\ar[d, "\displaystyle\eta_{\kappa^2\rho_2,W_2}"]&[3mm] \cdots \ar[r,"F(\kappa_*^{n}T_{n})"] &[3mm] F(\kappa^{n}\rho_n',W_{n}') \ar[r,equal]\ar[d,"\displaystyle\eta_{\kappa^{n}\rho_n',W_{n}'}"] &[-6mm] F(\iota',W') \ar[d, "\displaystyle\eta_{\iota',W'}"] \\
         G(\iota,W) \ar[r,equal] & G(\kappa^1\rho_1,W_1) \ar[r,"G(\kappa^1_* T_1)"] & G(\kappa^1\rho_1',W_1') \ar[r,equal] & G(\kappa^2\rho_2,W_2) \ar[r,"G(\kappa_*^2T_2)"]& \cdots \ar[r,"G(\kappa^{n}_*T_{n})"]& G(\kappa^{n}\rho_n',W_{n}') \ar[r,equal] & G(\iota',W') 
        \end{tikzcd}
    \end{equation*}
 It follows that \eqref{diag:fullness-naturality-diagram} is a commutative square, hence $\eta : \alpha \Rightarrow \beta$ is a natural transformation.
 Since $(c_*\eta_\iota)_W := \eta_{\iota,W} = (m_\iota)_W$, we conclude that $c_*$ is full.
\end{proof}
\begin{remark}
    When $\II$ contains the unit, i.e. $\II=\AA$, we obtain exactly \cite{CookeExcision}'s result. 
Note that our proof is quite different, as Cooke shows excision properties of skein categories and uses the characterization of factorization homology of \cite{Ayala_Francis_2019}. 
The original motivation for our approach was that it would be easier to generalize than the excision arguments.
\end{remark}

We will want to use the theorem above to relate our topological constructions to the results of \cite{BBJ}. 
Let us first translate our result to their context.
        
We can embed $\Bimod$ in the bicategory $\Pr$ of cocomplete presentable linear categories and cocontinuous functors by free cocompletion $\CC \mapsto \widehat\CC := \Fun(\CC^{op},\Vect)$. 
This embedding is symmetric monoidal. 
Its essential image is the full sub-bicategory spanned by categories with enough compact-projectives. 
See \cite{BCJReflDualPr, BJS} for more details. 

\begin{corollary}\label{cor:FHofE2inPr}
    Let $\EE$ be a $\Disk_2$-algebra in $\Pr$. 
Suppose that $\EE$ is cp-ribbon in the sense that it is cp-rigid and that its balancing is a ribbon structure on its subcategory of dualizable objects $\AA$. 
Then set $\II:=\EE^{cp}$ the ideal of compact-projective objects, we have an equivalence of symmetric monoidal 2-functors $$\displaystyle \int_-\EE \simeq \widehat{\SkCat_\II}(-)$$      between factorization homology with coefficients in $\EE$ and free cocompletions (or presheaf categories) of $\II$-skein categories.
\end{corollary}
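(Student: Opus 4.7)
The plan is to bootstrap from Theorem \ref{thm:main-result} applied to $\II := \EE^{cp}$ viewed as a unital $\Disk_2$-algebra in $\Bimod$, and then transport the equivalence to $\Pr$ via the symmetric monoidal embedding $\widehat{(-)}:\Bimod\hookrightarrow\Pr$ given by free cocompletion.

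First, I would check that $\II$ satisfies the hypotheses of Theorem \ref{thm:main-result}, i.e. that it is a tensor ideal in a ribbon category. Since $\EE$ is cp-rigid, every compact-projective object is dualizable, so $\II\subseteq \AA$. For any $X\in\AA$, the functor $X\otimes(-)$ admits the biadjoint $X^*\otimes(-)$, hence preserves both compact and projective objects; consequently $X\otimes P\in\II$ whenever $P\in\II$, so $\II$ is a tensor ideal in $\AA$. The ribbon structure on $\AA$ is the assumed hypothesis, and the balanced braided structure of $\EE$ restricts to $\II$ to give a unital $\Disk_2$-algebra in $\Bimod$ along the lines of Remark \ref{rmk:fake-empty}.

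Second, I would identify $\EE$ with $\widehat{\II}$ as oriented $\mathbb{E}_2$-algebras in $\Pr$. Cp-rigidity implies that $\II$ generates $\EE$ under colimits, so $\EE\simeq \widehat{\II}$ as presentable linear categories. Since the tensor product, braiding, and balancing on $\EE$ are each cocontinuous in every variable, they are determined by their restrictions to $\II$; this upgrades the equivalence $\EE\simeq\widehat{\II}$ to one of oriented $\mathbb{E}_2$-algebras in $\Pr$.

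Third, I would apply Theorem \ref{thm:main-result} to obtain $\int_-\II\simeq \SkCat_\II(-)$ as symmetric monoidal 2-functors $\Surf\to \Bimod$, and push this through $\widehat{(-)}$. Since $\widehat{(-)}:\Bimod\hookrightarrow\Pr$ is symmetric monoidal and, as a free cocompletion, preserves the $\otimes$-sifted 2-colimits defining factorization homology, one concludes
\[
    \int_-\EE \;\simeq\; \int_-\widehat{\II} \;\simeq\; \widehat{\textstyle\int_-\II} \;\simeq\; \widehat{\SkCat}_\II(-)
\]
as symmetric monoidal 2-functors $\Surf\to \Pr$, which is the claimed equivalence.

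The main subtlety is the commutation of factorization homology with free cocompletion. The cleanest approach uses the universal property of $\widehat{(-)}$: cocones in $\Pr$ on the diagram $\widehat{\tilde{\II}}$ with arbitrary tip $\DD\in\Pr$ correspond, by the $\Hom_\Pr(\widehat{\CC},\DD)\simeq \Hom_\Bimod(\CC,\DD)$ adjunction, to cocones in $\Bimod$ on $\tilde{\II}$ with the underlying tip. The universality of the cocone exhibited in the proof of Theorem \ref{thm:main-result} therefore transfers to universality of its cocompleted image in $\Pr$, identifying $\widehat{\SkCat}_\II(\Sigma)$ with $\int_\Sigma \EE$ in $\Pr$ and giving the corollary.
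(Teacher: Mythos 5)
Your proposal is correct and follows essentially the same route as the paper: verify that $\II=\EE^{cp}$ is a tensor ideal in the ribbon category $\AA$ with $\EE\simeq\widehat\II$, use the universal property of free cocompletion together with the fact that the universality established in the proof of Theorem \ref{thm:main-result} holds against arbitrary linear (not just presheaf) tips to exhibit $\widehat{\SkCat}_\II(\Sigma)$ as the 2-colimit in $\Pr$, and identify the resulting $\Disk$-algebra with $\EE$ because $\II$ generates under colimits. The only caveats are cosmetic: for a general tip $\DD\in\Pr$ the cocompletion adjunction gives $\Hom_{\Pr}(\widehat\CC,\DD)\simeq\Fun(\CC,\DD)$ rather than a literal $\Hom_{\Bimod}$, and the blanket claim that free cocompletion preserves the $\otimes$-sifted colimits defining factorization homology is neither justified nor needed once you run the universal-property argument of your last paragraph, which is exactly what the paper does.
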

\begin{proof}
By definition \cite[Definition-Proposition 1.3]{BJS}, $\EE$ being cp-rigid means that it has enough compact-projectives and its compact-projectives are dualizable. The first condition implies that $\EE \simeq \widehat\II$. 
The second asks that $\II\subseteq\AA$. 
It is a tensor ideal by the same proof as \cite[Proposition 4.2.12]{EGNO}. 
Note that $\AA$ is a priori a rigid balanced category, and the extra condition we ask is that the balancing on the dual is the dual of the balancing.

In the proof of the theorem above we actually show that \eqref{eq:cstar-functor} is an equivalence for any linear category $X$, not only for presheaf categories. 
By the universal property of free cocompletions we get $$\Hom_{\Pr}
(\widehat{\SkCat_\II}(\Sigma), X) \simeq \Hom_{\Cat}(\SkCat_\II(\Sigma),X) \simeq 
\Nat({\SkCat}\vert_{\Disk} \circ 
\operatorname{for},\const_X)$$ 
which, using the universal property of free cocompletion again, exhibits $\widehat{\SkCat_\II}(-)$ as the 2-colimit defining factorization homology in $\Pr$.

We just have to check that the $\Disk$-algebra $\widehat{\SkCat}\vert_{\Disk}$ is equivalent the one induced by $\EE$, namely that $\widehat{\SkCat}(\mathbb D)$ agree with $\EE$ as a balanced braided category. 
We already have an equivalence of categories $\EE \simeq \widehat \II \simeq \widehat{\SkCat}(\mathbb D)$. 
The braiding and balancing in $\widehat{\SkCat}(\mathbb D)$ between two objects of $\SkCat(\mathbb D) \simeq \II$ are defined, via the Reshetikhin--Turaev functor, by the braiding and balancing in $\II$ hence agree with the braiding and balancing of $\EE$ there. Now $\II$ generates either category under colimits, and natural transformations are determined by their value on such a generating subcategory.
\end{proof}
\begin{remark}\label{rmk:dep_on_II}
    Corollary \ref{cor:FHofE2inPr} implies that the $\II$-skein category of a surface does not depend on the ambient ribbon category $\AA$. The presence of $\AA$ is a nice way to package a list of conditions on $\II$, namely that its presheaf category $\widehat\II$ is a unital balanced braided monoidal category in $\Pr$ and is cp-ribbon. Importantly here, being unital is a property and not structure, as the unit is unique up to canonical isomorphism. 
\end{remark}
\begin{remark}\label{rmk:functoriality_in_II}
    We consider three senses in which $\SkCat_\II(\Sigma)$ is functoriality in $\II$.
    
    The most straightforward is with respect to ribbon functors $F:\AA \to \BB$ sending $\II\subseteq \AA$ to $\JJ\subseteq \BB$. 
    In this case we get a functor $\SkCat_\II(\Sigma)\to \SkCat_\JJ(\Sigma)$ sending an $\II$-colored ribbon graph to the same underlying graph with colors changed by $F$.

    Second, factorization homology is functorial in both its topological input and its algebraic input.
    Hence Theorem \ref{thm:main-result} implies that $\SkCat_{\II}\Sigma$ is functorial with respect to balanced braided monoidal cocontinuous functors $\widehat\II \to \widehat\JJ$.
    %This kind of morphism is more general than the first, as it may not send objects of $\II\subseteq\widehat \II$ to objects of $\JJ \subseteq \widehat{\JJ}$.
    In this case, we get a cocontinuous functor $\widehat\SkCat_\II(\Sigma)\to \widehat\SkCat_\JJ(\Sigma)$.
    This should admit a skein theoretic description similar to the description of the distinguished presheaf in Remark \ref{rmk:fake-empty}.
    %Note that indeed this distinguished presheaf is obtained from the inclusion of the unit in $\widehat\II$.

    Finally, given two $\Disk_2$-algebras $\widehat \II$ and $\widehat \JJ$, there is a notion of monoidal bimodules between them, discussed as central algebras in \cite{BJS} or as constructible factorization algebras with respect to the stratification $\Rr \subseteq \Rr^2$ in \cite{Scheimbauer}.
    In good cases this data determines a defect between $\II$- and $\JJ$-skein theory, a skein-theoretic description of which will appear in future work of the first author and D. Jordan.
\end{remark}

\subsection[Computation of I-skein algebras]{Computation of  $\II$-skein algebras}\label{sec:computionation-of-skalg}
Showing that $\II$-skein categories agree with factorization homology gives us access to a new toolkit.
The following statement follows directly from \cite{BBJ}, where they introduce the moduli algebra $A_\Sigma$ and show it is isomorphic as an algebra object in $\EE$ to $\mathcal{L}^{\otimes 2g+n-1}$ with a product twisted by appropriate braidings in \cite[Theorem 5.14]{BBJ}. Here $\LL$ is Lyubashenko's coend $\mathcal{L} := \int^{X \in \II} X \otimes X^* \in \EE$ \cite{LyubashenkoModtrasnfoTensorCats}.
%\Ben{We could add BBJ's main result that the free cocomp of mdf skein cats are categories of modules over the moduli algebra?} \Ben{Add a word on quantum hamiltonian reduction.}
\begin{corollary}\label{cor:ModSkAlgIsInvCoend}
Let $\EE,\AA$ and $\II$ be in Corollary \ref{cor:FHofE2inPr} and let $\Sigma$ be a connected genus $g$ surface with $n\geq 1$ punctures. 
There is an isomorphism of algebras
$$\SkAlg_\II(\Sigma) \simeq \Hom_\EE(\idty, \mathcal{L}^{\otimes 2g+n-1})$$ where $\mathcal{L}^{\otimes 2g+n-1}$ is endowed with a product twisted by appropriate braidings \cite[Definition 5.11]{BBJ}.
\end{corollary}
\begin{proof}
The $\II$-skein algebra is exactly the algebra of invariants $\Hom_{\widehat\II}(\idty, A_\Sigma)$ of the moduli algebra $A_\Sigma$ studied in \cite{BBJ}. 
Indeed $A_\Sigma$ is defined to be the internal endomorphism algebra $\underline{\End}_{\int_\Sigma\widehat\II}(\Dist_\Sigma)$ whose defining universal property asks that $$\Hom_{\widehat\II}(\idty, A_\Sigma) \simeq \Hom_{\int_\Sigma\widehat\II}(\idty\rhd\Dist_\Sigma, \Dist_\Sigma)\ .$$% Here $\Dist_\Sigma$ is the distinguished object induced by the inclusion of the empty surface $\emptyset\inj \Sigma$ under factorization homology, and 
Here $\rhd$ is the action induced by inserting a disk though the boundary of $\Sigma$. 

By Theorem \ref{thm:main-result}, $$\Hom_{\int_\Sigma\widehat\II}(\idty\rhd\Dist_\Sigma, \Dist_\Sigma)\simeq\Hom_{\widehat{\SkCat}_\II(\Sigma)}(\Dist_\Sigma, \Dist_\Sigma) =:\SkAlg_\II(\Sigma)\ .$$

\end{proof}
The moduli algebra and its algebra of invariants have been well-studied in the literature. The best known case \cite{BFRqmoduliIII} is when $\AA = H\text{--}\mathrm{mod}^{fd}$ for a non-semisimple finite-dimensional %factorizable
Hopf algebra $H$ (e.g. small quantum groups at roots of unity) and $\II$ the tensor ideal of projectives, so $\widehat\II \simeq H\text{--}\mathrm{mod}$. 
For $H$ the small quantum group associated with $\mathfrak{sl}_2$ at a $p$-th root of unity and $\Sigma = S^1\times [0,1]$ the annulus, it is shown to be $3p-1$ dimensional in \cite{GainutdinovTipuninInvCoend} and \cite[Sections 2.2 and 3]{FaitgSLFsl2} using an explicit basis.
This explicit description can be used to show non-surjectivity of the canonical map $\SkAlg_\AA(\Sigma) \to \SkAlg_\II(\Sigma)$.
By arguments of Matthieu Faitg, its image is $2p$-dimensional for the annulus.

\appendix

\section{Strong natural transformations and modifications}\label{sec:modifications}
We describe the category $\Nat(F,G)$ which appears in Definition \ref{def:factorization-homology} of factorization homology.
In what follows we assume that $\CC$ and $\DD$ are 2-categories and that $F,G:\CC\to\DD$ are 2-functors. We assume no strictness and give the pseudo version of every notion, but we will suppress unitors and associators, which can easily be inserted.

\begin{definition}[Definition 4.2.1 of \cite{JohnsonYauBook}]
    A \textbf{strong natural transformation} $\alpha: F\Rightarrow G$ between two 2-functors $F,G:\CC\to\DD$,
        \begin{equation}
        \begin{tikzcd}
        \mathcal{C}  \ar[bend left,rr,"G",""{name=G,below}] \ar[bend right,"F"',rr,""{name=F,above}] & \ & \mathcal{D},
        \ar[from=F,to=G,Rightarrow,"\alpha"] 
    \end{tikzcd}
\end{equation}
is the data of a 1-morphism $\alpha_A: F(A) \to G(A)$ in $\DD$ for each object $A\in\CC$ and an invertible 2-morphism $\alpha_f$ in $\DD$ filling the square
\begin{equation}
    \begin{tikzcd}
        F(A_1) \arrow[d, "F(f)"'] \arrow[r, "\alpha_{A_1}"] & G(A_1) \arrow[d, "G(f)"]\\
        F(A_2)  \arrow[r, "\alpha_{A_2}"] & G(A_2) \arrow[to = 1-2, from= 2-1, Leftrightarrow, "\alpha_f"]
    \end{tikzcd}
\end{equation}
for each 1-morphism $f:A_1 \to A_2$ in $\CC$. 
These should be natural in the following sense.
We require that $\alpha_{{\id}_A}=\id_{\alpha_A}$ for each object $A$ and that for composable $A_1\overset{f}{\to} A_2 \overset{g}{\to} A_3$ we have an equality of 2-morphisms in $\DD$:
    
    \begin{equation}\label{eq:strnatCompo} \begin{tikzcd}
        F(A_1) \arrow[d, "F(g\circ f)"'] \arrow[r, "\alpha_{A_1}"] & G(A_1) \arrow[d, "G(g\circ f)"]\\
        F(A_3)  \arrow[r, "\alpha_{A_3}"] & G(A_3) \arrow[to = 1-2, from= 2-1, Leftrightarrow, "\alpha_{g\circ f}"]
    \end{tikzcd} \quad=\quad \begin{tikzcd}
        F(A_1) \arrow[d, "F(f)"'] \arrow[r, "\alpha_{A_1}"] & G(A_1) \arrow[d, "G(f)"]\\
        F(A_2) \arrow[d, "F(g)"'] \arrow[r, "\alpha_{A_2}"] & G(A_2) \arrow[d, "G(g)"]\arrow[to = 1-2, from= 2-1, Leftrightarrow, "\alpha_f"] \\
        F(A_3)  \arrow[r, "\alpha_{A_3}"] & G(A_3) \arrow[to = 2-2, from= 3-1, Leftrightarrow, "\alpha_g"]
    \end{tikzcd}    
    \end{equation}
    Finally, for every 2-morphism $h: f \Rightarrow g$ in $\CC$ we must have the following equality: 
    \begin{equation}\label{eq:strnat2morph}\begin{tikzcd}
        F(A_1) \arrow[d, "F(f)"'] \arrow[r, "\alpha_{A_1}"] & G(A_1) \arrow[d, "G(f)"'] \arrow[r, equal] & G(A_1) \arrow[d, "G(g)"]\\
        F(A_2)  \arrow[r, "\alpha_{A_2}"'] & G(A_2) \arrow[to = 1-2, from= 2-1,shift left,Leftrightarrow,,"\alpha_f"] \arrow[r, equal] & G(A_2) \arrow[to = 1-3, from= 2-2, Rightarrow, "G(h)"]
    \end{tikzcd}
    \quad=\quad
    \begin{tikzcd}
        F(A_1) \arrow[d, "F(f)"'] \arrow[r, equal]& F(A_1) \arrow[d, "F(g)"'] \arrow[r, "\alpha_{A_1}"] & G(A_1) \arrow[d, "G(g)"]\\
        F(A_2) \arrow[r, equal] & F(A_2)  \arrow[r, "\alpha_{A_2}"'] & G(A_2) \arrow[to = 1-2, from= 2-1, shift left, Rightarrow, "F(h)"] \arrow[to = 1-3, from= 2-2, Leftrightarrow, "\alpha_g"]
    \end{tikzcd}
    \end{equation}
    %The strong natural transformation $\alpha$ is called \textbf{strict} if all the $\alpha_f$ are equalities.
\end{definition}

%Suppose we have two strong natural transformations $\alpha,\beta : F \Rightarrow G.$ Here $F,G : \mathcal{C} \to \mathcal{D}$ are 2-functors between 2-categories.
%\begin{equation}
%        \begin{tikzcd}
%        \mathcal{C} \ar[bend left=60,r,"G",""{name=Gright,below,xshift=2ex},""{name=Gleft,below,xshift=-2ex}] \ar[bend right=60,"F"',r,""{name=Fright,above,xshift=2ex},""{name=Fleft,above,xshift=-2ex}] &[3em] \mathcal{D}
%        \ar[from=Fleft,to=Gleft,Rightarrow,"\alpha"] %\ar[from=Fright,to=Gright,Rightarrow,"\beta"']
%    \end{tikzcd}
%\end{equation}
\begin{definition}[Definition 4.4.1 of \cite{JohnsonYauBook}]
    A \textbf{modification} $m : \alpha \Rrightarrow \beta$ between two strong natural transformations $\alpha,\beta : F \Rightarrow G$
\iffalse
, written
    \begin{equation}
        \begin{tikzcd}
        \mathcal{C}  \ar[bend left=60,rr,"G",""{name=Gright,below,xshift=2ex},""{name=Gleft,below,xshift=-2ex}] \ar[bend right=60,"F"',rr,""{name=Fright,above,xshift=2ex},""{name=Fleft,above,xshift=-2ex}] & \overset{m}{\Rrightarrow}& \mathcal{D},
        \ar[from=Fleft,to=Gleft,Rightarrow,"\alpha", xshift = -2] \ar[from=Fright,to=Gright,Rightarrow,"\beta"', xshift = 2]
    \end{tikzcd}
\end{equation}
\fi
    is the data of a 2-morphism $m_A : \alpha_A \Rightarrow \beta_A$ 
    %filling the following bigon
    for every object $A \in \mathcal{C}$.
    \iffalse
    :
    \begin{equation}\begin{tikzcd}
        F(A) \ar[bend left=60,r,"\alpha_A",""{name=alpha,below}] \ar[bend right=60,"\beta_A"',r,""{name=beta,above}] &[2em] G(A).
        \ar[from=beta,to=alpha,Leftarrow,"m_A"]
    \end{tikzcd}\end{equation}
    \fi
    They must be compatible with the 2-cell components of $\alpha$ and $\beta$, namely for every 1-morphism $f : A_1 \to A_2$ we have an equality of 2-morphisms in $\DD$: % $\beta_{\phi}\circ_v(m_{A_1}\circ_h \id_{G(\phi)}) = (\id_{F(\phi)} \circ_h m_{A_2}) \circ_v \alpha_\phi$ from $G(\phi)\circ\alpha_{A_1}$ to $\beta_{A_2}\circ F(\phi)$. Pictorially, the following diagrams give the same 2-morphism in $\mathcal{D}$:
\begin{equation} \label{diag:mutation-2-cell-compatibility}
\begin{tikzcd}
        F(A_1) \arrow[d, "F(f)"'] \arrow[r, "\alpha_{A_1}"] & G(A_1) \arrow[d, "G(f)"]\\
        F(A_2)  \arrow[r, "\alpha_{A_2}",""{name=alpha,below}] \arrow[d, equal] & G(A_2) \arrow[to = 1-2, from= 2-1, Leftrightarrow, "\alpha_f"] \arrow[d, equal] \\
        F(A_2) \arrow[r, "\beta_{A_2}"', ""{name=beta,above}]& G(A_2)
        \arrow[to = 1-2, from= 2-1, Leftrightarrow, "\alpha_f"] \ar[from=beta,to=alpha,Leftarrow,"m_{A_2}"]
    \end{tikzcd} \quad=\quad
    \begin{tikzcd}
    F(A_1) \arrow[d, equal] \arrow[r, "\alpha_{A_1}",""{name=alpha,below}] & G(A_1) \arrow[d, equal] \\
        F(A_1) \arrow[d, "F(f)"'] \arrow[r, "\beta_{A_1}", ""{name=beta,above}] & G(A_1) \arrow[d, "G(f)"]\\
        F(A_2)  \arrow[r, "\beta_{A_2}"] & G(A_2) \arrow[to = 2-2, from= 3-1, Leftrightarrow, "\beta_f"] \ar[from=beta,to=alpha,Leftarrow,"m_{A_1}", shorten=5]
    \end{tikzcd} 
%\begin{tikzcd}
%    & F(A_2) \ar[dr,bend left,"\beta_{A_2}"]& \\
%    F(A_1)\hspace{2ex} \ar[ru,bend left, "F(\phi)"] \ar[r,bend left,"\beta_{A_1}",""{name=mU, below}] \ar[r,bend right, "\alpha_{A_1}"',""{name=mD, above}]
%    & G(A_1) \ar[u,Rightarrow,"\beta_\phi"] \ar[r,bend left,""{name=idU, below}] \ar[r,bend right,"G(\phi)"',""{name=idD, above}]
%    & G(A_2)
%    \ar[from=mD,to=mU,Rightarrow,"m_{A_1}"]
%    \ar[from=idD,to=idU,Rightarrow,"\id"]
%\end{tikzcd}\quad=\quad
%\begin{tikzcd}
%    F(A_1) \ar[r,bend left,"F(\phi)",""{name=idU,below}] \ar[r,bend right,""{name=idD,above}] \ar[rd,bend right,"\alpha_{A_1}"']
%    & F(A_2) \ar[r,bend left, "\beta_{A_2}", ""{name=mU,below}] \ar[r,bend right,"\alpha_{A_2}"', ""{name=mD,above}] 
%    &\hspace{2ex} G(A_2) \\
%    & G(A_1) \ar[u,Rightarrow,"\alpha_\phi"] \ar[ru,bend right, "G(\phi)"'] &
%    \ar[from=idD,to=idU,Rightarrow,"\id"]
%    \ar[from=mD,to=mU,Rightarrow,"m_{A_2}"']
%\end{tikzcd}
\end{equation}
\end{definition}
\begin{definition}[Corollary 4.4.13 of \cite{JohnsonYauBook}]
    Given 2-functors $F,G:\CC\to\DD$ we write $\Nat(F,G)$ for the category of strong natural transformations from $F$ to $G$ and modifications between these. %We denote $\Nat^{str}(F,G)$ the category of strict strong natural transformations.
    %\\ Given strict 2-categories $\CC$ and $\DD$, we denote $\twoFun(\CC,\DD)$ the 2-category of 2-functors, strong natural transformations and modifications. We denote $\twoFun^{str}(\CC,\DD)$ the 2-category of strict 2-functors, strict strong natural transformations and modifications.
\end{definition}
 Note that in this paper we work with (2,1)-categories. 
This implies in particular that every modification is invertible, and that the category $\Nat(F,G)$ is a groupoid.

\small
\bibliography{main.bib}
\bibliographystyle{alpha}
\end{document}